\documentclass[12pt]{amsart}
\usepackage{amsmath,amssymb,amscd,amsthm,amsxtra}
\usepackage{latexsym}

\usepackage[dvips]{graphicx}        

\headheight=8pt
\topmargin=0pt
\textheight=624pt
\textwidth=438pt
\oddsidemargin=18pt
\evensidemargin=18pt

\newtheorem{theorem}{Theorem}[section]
\newtheorem{lemma}{Lemma}[section]
\newtheorem{proposition}{Proposition}[section]
\newtheorem{remark}{Remark}[section]

\newtheorem{definition}{Definition}[section]
\newtheorem{corollary}{Corollary}[section]




\newcommand{\La}{\Lambda}



\newcommand{\ti}{\tilde}

\newcommand{\wh}{\widehat}

\newcommand{\ZR}{\mathbb{R}}
\newcommand{\ZZ}{\mathbb{Z}}
\newcommand{\e}{\varepsilon}
\newcommand{\w}{\omega}
\newcommand{\Id}{{\bf 1}}

\newcommand{\bS}{{\bf S}}
\newcommand{\bT}{{\bf T}}

\newcommand{\bQ}{{\bf Q}}

\newcommand{\bI}{{\bf I}}

\newcommand{\mQ}{{\mathcal Q}}

\begin{document}

\title
[Bilinear Hilbert Transforms along curves, I]
{Bilinear Hilbert Transforms along curves \\I. The monomial case}

\author{Xiaochun Li}

\address{
Xiaochun Li\\
Department of Mathematics\\
University of Illinois at Urbana-Champaign\\
Urbana, IL, 61801, USA}

\email{xcli@math.uiuc.edu}

\date{\today}

\subjclass{Primary  42B20, 42B25. Secondary 46B70, 47B38.}
\thanks{Research was partially supported by the NSF}

\keywords{Bilinear Hilbert transform along curves}

\begin{abstract}
We establish an $L^2\times L^2$ to $L^1$ estimate for the bilinear
Hilbert transform along a curve defined by a monomial. Our proof is closely
related to multilinear oscillatory integrals. 
\end{abstract}
\maketitle

\section{Introduction}
\setcounter{equation}0

Let $d\geq 2$ be a positive integer.
We consider the bilinear Hilbert transform along a curve $\Gamma(t)=(t, t^d)$ defined by
\begin{equation}\label{defofH}
H_{\Gamma}(f,g)(x)={p.v.}\int_{\ZR}f(x-t)g(x - t^d)\frac{dt}{t}\,,
\end{equation}
where $f, g$ are Schwartz functions on $\ZR$.

The main theorem we prove in this paper is

\begin{theorem}\label{thm1}
The bilinear Hilbert transform along the curve $\Gamma(t)=(t, t^d)$ can be extended to a bounded
operator from $L^2\times L^2$ to $L^1$.
\end{theorem}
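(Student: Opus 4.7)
The plan is to decompose the kernel $\frac{1}{t}$ dyadically and analyze each scale via bilinear Fourier multiplier estimates combined with Littlewood--Paley theory. Write
\[\frac{1}{t}=\sum_{k\in\ZZ}\psi_k(t),\qquad \psi_k(t)=2^k\psi(2^kt),\]
where $\psi$ is a smooth mean-zero bump supported in $1\leq|t|\leq 2$. This gives $H_\Gamma=\sum_k T_k$ with
\[T_k(f,g)(x)=\int f(x-t)g(x-t^d)\psi_k(t)\,dt,\]
and the associated bilinear multiplier is $m_k(\xi,\eta)=\widetilde m(2^{-k}\xi,2^{-dk}\eta)$ with $\widetilde m(\alpha,\beta)=\int e^{-2\pi i(s\alpha+s^d\beta)}\psi(s)\,ds$. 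Stationary phase and integration by parts show $|\widetilde m(\alpha,\beta)|\lesssim\min(1,|\beta|^{-1/2})$ with rapid decay when $|\alpha|\gg\max(1,|\beta|)$, while the mean-zero property of $\psi$ gives $|\widetilde m(\alpha,\beta)|\lesssim|\alpha|+|\beta|$ near the origin.

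Next, I would decompose $f=\sum_n P_{k+n}f$ and $g=\sum_m Q_{dk+m}g$ via Littlewood--Paley projections at frequency scales $2^{k+n}$ and $2^{dk+m}$, the natural scales suggested by the rescaling structure of $m_k$. Restricted to these frequency supports, $m_k$ has size essentially $|\widetilde m(2^n,2^m)|$, and the multiplier estimates above yield a small factor $c_{n,m}=|\widetilde m(2^n,2^m)|$ satisfying $\sum_{n,m}c_{n,m}<\infty$ once $(n,m)$ leaves the compact resonant region $|n|,|m|=O(1)$. For each $(n,m)$, the pointwise Cauchy--Schwarz bound
\[|T_k(F,G)(x)|^2\leq\Bigl(\int|F(x-t)|^2|\psi_k(t)|\,dt\Bigr)\Bigl(\int|G(x-t^d)|^2|\psi_k(t)|\,dt\Bigr),\]
combined with Cauchy--Schwarz in $x$, gives the uniform-in-$k$ estimate $\|T_k(P_{k+n}f,Q_{dk+m}g)\|_1\lesssim\|P_{k+n}f\|_2\|Q_{dk+m}g\|_2$; applying Cauchy--Schwarz in $k$ followed by Littlewood--Paley orthogonality then yields $\sum_k\|T_k(P_{k+n}f,Q_{dk+m}g)\|_1\lesssim\|f\|_2\|g\|_2$.

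Combining these pieces gives $\|H_\Gamma(f,g)\|_1\lesssim\sum_{n,m}c_{n,m}\|f\|_2\|g\|_2\lesssim\|f\|_2\|g\|_2$, once the $c_{n,m}$-gain is rigorously established. The main obstacle is exactly this last step: translating the pointwise multiplier bound $|\widetilde m(2^n,2^m)|$ into a genuine operator-norm gain of $c_{n,m}$ for the restricted piece $T_k(P_{k+n}f,Q_{dk+m}g)$, since the trivial pointwise Cauchy--Schwarz bound above does not by itself see the oscillatory cancellation in the multiplier. Extracting this gain requires a refined analysis of the kernel, or equivalently an application of Coifman--Meyer-style bilinear multiplier theory to each frequency piece; this is precisely where the problem connects to multilinear oscillatory integrals, as indicated in the abstract, because the trilinear form dual to $T_k$ is such an integral governed by the non-degenerate phase $t\xi+t^d\eta$ arising from the curvature of $\Gamma$ (which is genuine since $d\geq 2$).
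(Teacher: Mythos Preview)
Your initial decomposition---dyadic in $t$, then Littlewood--Paley in $(\xi,\eta)$ relative to the natural scales $(2^k,2^{dk})$---is exactly the one the paper uses (Section~\ref{decomp}). Your treatment of the off-diagonal region $|n-m|\gg 1$ also aligns with Section~\ref{paraproduct}, where integration by parts plus Fourier series reduce those pieces to standard paraproducts. So far there is no disagreement.

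The gap you flag is genuine, and it is the entire content of the theorem. In the resonant region $n\approx m\gg 1$, stationary phase gives
\[
\widetilde m(2^n\xi,2^m\eta)\;\sim\; 2^{-m/2}\,e^{\,i c_d\,2^m\,\xi^{d/(d-1)}\eta^{-1/(d-1)}},\qquad |\xi|,|\eta|\sim 1.
\]
The amplitude is $2^{-m/2}$, but the phase oscillates at frequency $\sim 2^m$; every derivative of the symbol is therefore of size $\sim 2^{m/2}$, not $2^{-m/2}$. This rules out any Coifman--Meyer-type argument, which requires Mikhlin derivative decay and fails here by an exponentially large factor. And a mere $L^\infty$ bound on a bilinear symbol does not by itself yield an $L^2\times L^2\to L^1$ bound with that constant, so the ``$c_{n,m}$-gain'' you want does not follow from the multiplier size alone.

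What the paper actually does in this region is substantially deeper. After rescaling, the problem becomes a trilinear oscillatory integral with the phase above, and the required decay depends on \emph{both} parameters $k$ and $m$. When $(d-1)|k|\le m$, a $TT^*$ argument combined with H\"ormander's non-degenerate phase estimate gives decay $2^{((d-1)|k|-m)/6}$ (Proposition~\ref{triosc*222}). But the paper shows explicitly (the discussion following that proposition) that the corresponding $L^2$ decay estimate is \emph{false} once $(d-1)|k|>m$; the $TT^*$ method simply cannot close in that range. There the paper introduces a Gowers-type notion of $\sigma$-uniformity with respect to the family $\{a\xi^{d/(d-1)}+b\xi:|a|\sim 2^m\}$ and runs a structure-versus-randomness dichotomy: either $f_1$ is uniform and one gets decay directly via a multiplier estimate, or $f_1$ correlates with some $e^{iq}$, and one reduces to a two-variable oscillatory integral handled by a multilinear van der Corput lemma (Sections~\ref{unif}--\ref{proofprop2neg}). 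None of this machinery is visible in your outline, and there is no known shortcut around it; in particular, the difficulty is not a matter of sharpening the kernel analysis or invoking standard bilinear multiplier theory.
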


\begin{remark}
It can be shown, with a little modification of our method, that the bilinear Hilbert transforms
along polynomial curves $(t, P(t))$ are bounded from $L^p\times L^q$ to $L^r$ whenever $(1/p, 1/q, 1/r)$ is
in the closed convex hull of $(1/2, 1/2, 1) $, $(1/2, 0, 1/2) $ and $(0, 1/2, 1/2)$.
\end{remark}

This problem is motivated by the Hilbert transform along a curve $\Gamma=(t, \gamma(t))$
 defined by
$$
H_{\Gamma}(f)(x_1,x_2)={p.v.}\int_{\ZR}f(x_1-t, x_2-\gamma(t))\frac{dt}{t}\,,
$$
and the bilinear Hilbert transform defined by
$$
H(f, g)(x) ={p.v.}\int_{\ZR}f(x-t)g(x+t)\frac{dt}{t}\,.
$$
Among various curves, one simple model case is the parabola $(t,t^{2})$ in the two
dimensional plane. This work was initiated by Fabes and Riviere \cite{FR} in
order to study the regularity of parabolic differential equations.
In the last thirty years, considerable work on this type of problems had been done. 
A nice survey on this type of operators was written by  Stein and Wainger \cite{SW}.
For the curves on homogeneous nilpotent Lie groups, the $L^p$ estimates
were established by Christ \cite{Christ1}.  The work for the Hilbert transform along
more general curves with certain geometric conditions such as the "flat" case
can be found in Christ, Duoandikoetxea and J. L. Rubio de Francia, and Nagel,
Vance, Wainger and Weinberg's papers \cite{DRubio,Christ2, NVWW}.
The general results were established recently in \cite{CNSW} for the singular Radon transforms and 
their maximal analogues over smooth submanifolds of ${\mathbb{R}}^n$ with some curvature 
conditions.\\

In recent years there has been a very active trend of harmonic analysis
using time-frequency analysis to deal with multi-linear operators. 
A breakthrough on the bilinear Hilbert transform was made 
by Lacey and Thiele \cite{LT1, LT2}. Following Lacey and Thiele's work, 
the field of multi-linear 
operators has been actively developed, to the point that some of the most 
interesting open questions have a strong connection to some kind of non-abelian 
analysis. For instance, the tri-linear Hilbert transform 
\begin{equation*}
p.v.\int f_1 (x+t) f_2 (x+2t) f _3 (x+3t) \frac{dt}{t} 
\end{equation*}
has a hidden quadratic modulation symmetry which must be accounted for in any 
proposed method of analysis. This non-abelian character is explicit in the 
work of B.~Kra and B.~Host \cite{HK} who characterize the 
characteristic factor of the corresponding ergodic averages 
\begin{equation*}
N ^{-1} \sum _{n=1} ^{N} f_1 (T ^{n}) f_2 (T ^{2n}) f_3 (T ^{3n}) 
\longrightarrow \prod _{j=1} ^{3} \mathbb E (f_j \mid \mathcal N)
\end{equation*}
Here, $ (X, \mathcal A, \mu ,T )$ is a measure preserving system, $ \mathcal N\subset
\mathcal A$ is the sigma-field which describes the characteristic factor.  In this case, 
it arises from certain $ 2$-step nilpotent groups.  The limit above is in the sense 
of $ L ^{2}$-norm convergence, and holds for all bounded $ f_1,f_2,f_3$.

The ergodic analog of  the bilinear Hilbert transform along a parabola is 
the non-conventional bi-linear average  
\begin{equation*}
N ^{-1} \sum _{n=1} ^{N} f_1 (T ^{n}) f_2 (T ^{n ^2 })
\longrightarrow \prod _{j=1} ^{2} \mathbb E (f_j \mid \mathcal K _{\textup{profinite}})
\end{equation*}
where $ \mathcal K _{\textup{profinite}}\subset \mathcal A$ is the profinite factor, 
a subgroup of the maximal abelian factor of $ (X, \mathcal A, \mu ,T)$.  
The proof  of the characteristic factor result  
above, due to Furstenberg \cite{Fur}, utilizes the characteristic factor for the 
three-term result.  We are indebted to M. Lacey for bringing Furstenberg's theorems to our attention.
However, a notable fact is that our proof for the bilinear Hilbert transform along a monimial
curve does not have to go through the tri-linear Hilbert transform. The proof provided in 
this article  heavily relies on the concept of "quadratic" uniformity and some kind 
of "quadratic" Fourier analysis, initiated by Gowers \cite{G}. And perhaps this is a starting point 
to understand the tri-linear Hilbert transform.  \\


Another prominent theme is the relation of the bilinear Hilbert transforms along curves and 
the multilinear oscillatory integrals. The bilinear Hilbert transforms along curves are closely 
associated to the multilinear oscillatory integrals of the following type.
\begin{equation}
 \Lambda_\lambda(f_1, f_2, f_3)=\int_{\bf B} f_1({\bf x}\cdot {\bf v}_1) f_2({\bf x}\cdot {\bf v}_2) f_3({\bf x}\cdot {\bf v}_3)
  e^{i\lambda \varphi({\bf x})} d{\bf x}\,,
\end{equation}
where ${\bf B}$ is a unit ball in $\mathbb R^3$, ${\bf v}_1, {\bf v}_2, {\bf v}_3$ are vectors in $\mathbb R^3$,
and the phase function $\varphi$ satisfies a non-degenerate condition 
\begin{equation}\label{nondegcond}
 \left|\prod_{j=1}^3\left(\nabla\cdot {\bf v}_j^\perp \right)\varphi({\bf x}) \right|\geq 1\,.
\end{equation}
Here ${\bf v}_j^\perp$'s are unit vectors orthogonal to ${\bf v}_j$'s respectively. For a polynomial 
phase $\varphi$ with the non-degenerate condition (\ref{nondegcond}), it was proved in \cite{CLTT}
that
\begin{equation}\label{Linftyest}
 \left| \Lambda_\lambda(f_1, f_2, f_3) \right|\leq C(1+|\lambda|)^{-\e}\prod_{j=1}^3\|f_j\|_\infty\,
\end{equation} 
holds for some positive number $\e$. For the particular vectors ${\bf v}_j$'s and the non-degenerate phase
$\varphi$ encountered in our problem, an estimate similar to (\ref{Linftyest}) still holds. However, one of the main
 difficulties arises from the falsity of $L^2$ decay estimates for the trilinear form $\Lambda_\lambda$.
In order to overcome this difficulty, we end up introducing the "quadratic" uniformity, which
plays a role of a "bridge" connecting two spaces $L^2$ and $L^\infty$.   \\

The method used in this paper essentially works for those curves on nilpotent
groups. It is possible to extend Theorem {\ref{thm1}} to the general setting of
nilpotent Lie groups. But we will not pursue this in this article.
There are some related questions one can pose. Besides the generalisation to 
the more general curves, it is natural to ask the corresponding problems 
in higher dimensional cases and/or in multi-linear cases. 
For instance, in the tri-linear case, one can consider 
\begin{equation}
 T(f_1, f_2, f_3)(x)= {p.v.}\int f_1(x+t)f_2(x+p_1(t))f_3(x+p_2(t)) \frac{dt}{t}\,. 
\end{equation}
Here $p_1, p_2$ are polynomials of $t$. The investigation of such problems
will be discussed in subsequent papers.\\

\noindent
{\bf Acknowledgement} The author would like to thank 
his wife, Helen,  and his son, Justin, for being together through
the hard times in the past two years. And he is also very grateful to 
Michael Lacey for his constant support and encouragement.

\section{A Lemma and A Counterexample}\label{sectionfixj}
\setcounter{equation}0

Let $\rho$ be a Schwartz function supported in the union of two intervals
$[-2, -1/2]$ and $[1/2, 2]$.

\begin{lemma}\label{lemmaj}
Let $P$ be a real polynomial with degree $d\geq 2$. And let $2 \leq
n \leq d$. Suppose that the $n$-th order derivative of $P$,
$P^{(n)}$, does not vanish. Let $T(f,g)(x)= \int f(x-t)g(x-P(t))
\rho(t) dt$. Then $T$ is bounded from $L^{p}\times L^q $ to $L^r$
for $p, q>1$, $r> \frac{n-1}{n}$ and $ 1/p+1/q=1/r$.
\end{lemma}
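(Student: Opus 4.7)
The plan is to prove boundedness trivially for $r \ge 1$ and to obtain the range $r \in ((n-1)/n, 1)$ via a single non-trivial oscillatory endpoint combined with multilinear interpolation.

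For the range $r \ge 1$ (i.e., $1/p + 1/q \le 1$), Minkowski's integral inequality in the $t$-variable, combined with Hölder on each translated slice, gives
$$\|T(f, g)\|_r \le \int \|f(\cdot - t) g(\cdot - P(t))\|_r \, |\rho(t)|\, dt \le \|\rho\|_1 \|f\|_p \|g\|_q$$
for any $1 \le p, q \le \infty$ with $1/p + 1/q = 1/r$. This disposes of the ``upper'' portion of the Hölder plane.

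For the harder range $r < 1$, I would analyse $T$ as a bilinear Fourier multiplier with symbol
$$m(\xi, \eta) = \int \rho(t)\, e^{-i\xi t - i\eta P(t)}\, dt.$$
The hypothesis $|P^{(n)}| \ge c > 0$ on $\text{supp}\,\rho$ makes the $n$-th derivative of the phase bounded below by $c|\eta|$, so van der Corput's lemma yields $|m(\xi, \eta)| \lesssim (1+|\eta|)^{-1/n}$ uniformly in $\xi$, while integration by parts gives rapid decay $|m|\lesssim_N (1+|\xi|)^{-N}$ in the non-stationary region $|\xi|\gg|\eta|$. I would then perform a Littlewood--Paley decomposition $g = \sum_k g_k$ with $\hat g_k$ supported in $|\eta|\sim 2^k$, isolating pieces on which $|m|\lesssim 2^{-k/n}$. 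The band-limited structure of $\widehat{T(f, g_k)}$ (roughly concentrated in $|\tau|\lesssim 2^k$) together with the compact support of $\rho$ (which makes $T$ essentially local in $x$) enables conversions among $L^p$ norms by Hölder and Bernstein-type inequalities on fixed physical intervals. Balancing the van der Corput gain $2^{-k/n}$ against these Bernstein losses should produce a decay estimate
$$\|T(f, g_k)\|_{r_0} \le C\, 2^{-\alpha k}\, \|f\|_{p_0} \|g_k\|_{q_0}$$
with $\alpha > 0$ for some Hölder-consistent triple $(p_0, q_0, r_0)$ having $r_0$ arbitrarily close (from above) to $(n-1)/n$. Summing over $k$ using the quasi-triangle inequality in $L^{r_0}$ and Littlewood--Paley theory in $L^{q_0}$ ($q_0 > 1$) then produces the oscillatory endpoint, and multilinear interpolation against the trivial endpoints $L^\infty\times L^q\to L^q$ and $L^p\times L^\infty\to L^p$ delivers the full claimed range.

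The main obstacle is the oscillatory endpoint. The exponent $(n-1)/n$ is critical, marking the precise value at which the decay $2^{-k/n}$ per dyadic band just outruns the combined Bernstein-type losses incurred when passing from $L^2$-type estimates to $L^{r_0}$-type estimates on each band. Making this balance rigorous down to (arbitrarily close to) $(n-1)/n$ is delicate, and likely requires an additional dyadic decomposition of $f$ in the $\xi$ variable so that the non-stationary decay of $m$ in the region $|\xi|\gg|\eta|$ is fully exploited in controlling off-diagonal interactions.
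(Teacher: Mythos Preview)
Your trivial estimates for $r \geq 1$ are fine, but the heart of the matter --- the range $r \in ((n-1)/n, 1)$ --- is left as a heuristic. You yourself flag the ``oscillatory endpoint'' as the main obstacle and say that making the balance rigorous ``is delicate''; in fact you never carry it out. The claim that van der Corput decay $2^{-k/n}$ on dyadic $\eta$-bands can be converted, via Bernstein and localization, into an $L^{p_0}\times L^{q_0}\to L^{r_0}$ bound with $r_0$ arbitrarily close to $(n-1)/n$ is not substantiated: you specify neither the exponents $(p_0,q_0,r_0)$ nor the mechanism by which a pointwise symbol bound becomes an $L^r$ operator bound for $r<1$, and the summation over Littlewood--Paley pieces of $g$ in $L^{q_0}$ has its own constraints (for instance $\sum_k\|g_k\|_{q_0}^{q_0}\lesssim\|g\|_{q_0}^{q_0}$ is essentially available only at $q_0=2$). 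This is not a detail to be filled in later; it is the entire content of the lemma.

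The paper's argument is purely physical-space and uses no oscillation at all. After localizing $x$ to a bounded interval, one observes that the change of variables $(x,t)\mapsto(x-t,\,x-P(t))$ has Jacobian $1-P'(t)$. Away from the zeros of $1-P'(t)$ this immediately gives $L^1\times L^1\to L^1$, hence $\to L^{1/2}$ on a bounded interval, and interpolation with the trivial bounds finishes the job. Near a root $t_0$ of $P'(t)=1$ one decomposes dyadically in $|t-t_0|\sim 2^{-j}$ and localizes $x$ to intervals of matching length $\sim 2^{-j}$; the hypothesis $P^{(n)}\neq 0$ forces $|1-P'(t)|\gtrsim |t-t_0|^{n-1}\sim 2^{-(n-1)j}$, so the same change of variables yields $\|T_j(f,g)\|_{1/2}\lesssim 2^{(n-2)j/2}\|f\|_1\|g\|_1$ on each localized piece. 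Interpolating this against the trivial $O(2^{-j})$ bounds at the vertices $(1,0,1)$, $(0,1,1)$, $(0,0,0)$ produces geometric decay in $j$ precisely when $r>(n-1)/n$. The threshold thus emerges from the order of degeneration of the Jacobian rather than from any stationary-phase gain --- a mechanism that is both elementary and, in light of the sharpness example in Proposition~\ref{counterexample}, clearly the correct one here.
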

\begin{proof}
We may without loss of generality restrict $x$, hence likewise the
supports of $f, g$, to fixed bounded intervals whose sizes depend on
the coefficients of the polynomial $P$. This is possible because of
the restriction $|t|\leq 2$ in the integral. Let us restrict $x$ in
a bounded interval $I_P$. It is obvious that $T$ is bounded
uniformly from $L^\infty\times L^\infty$ to $L^\infty$ and from
$L^p\times L^{p'}$ to $L^1$ for $1\leq p\leq\infty$ and
$1/p+1/p'=1$. When $P'(t)\neq 1$ in $1/2\leq |t|\leq 2$, then the
boundedness from $L^1\times L^1$ to $L^1$ can be obtained
immediately by changing variable $u=x-t$ and $v=x-P(t)$ since the
Jocobian $\frac{\partial (u,v)}{\partial (x,t)}= 1-P'(t)$. Thus $T$
is bounded from $L^1\times L^1$ to $L^{1/2}$ since $x$ is restricted
to a bounded interval $I_P$ and then the lemma follows by
interpolation. When there is a real solution in $1/2\leq|t| \leq 2$
to the equation $P'(t)=1$, the trouble happens at a neighborhood of
$t_0$, where $t_0\in \{t: 1/2\leq |t|\leq 2\}$ is the real solution to
$P'(t)=1$. There are at most $d-1$ real solutions to the equation
$P'(t)-1=0$. Thus we only need to consider a small neighborhood
containing only one real solution $t_0$ to $P'(t)=1$. Let $I(t_0)$
be a small neighborhood of $t_0$ which contains only one real
solution to $P'(t)-1=0$. We should prove that
\begin{equation}\label{estlem0}
\int_{I_P}\bigg|\int_{I(t_0)} f(x-t)g(x-P(t))\rho(t) dt\bigg|^r dx
\leq C_P\|f\|_{p}^r\|g\|_q^r\,,
\end{equation}
for $p>1,q>1$ and $r>(n-1)/n$ with $1/p+1/q=1/r$. Let $\rho_0$ be a
suitable bump function supported in $1/2\leq|t|\leq 2$ such that
$\sum_j\rho_0(2^jt)=1$. To get (\ref{estlem0}), it suffices to prove
that there is a positive $\e$ such that
\begin{equation}\label{estlem1}
\int_{I_P}\bigg|\int_{I(t_0)} f(x-t)g(x-P(t))\rho(t)\rho_0({2^j(t-t_0)})dt \bigg|^r dx
\leq C2^{-\e j}\|f\|_{p}^r\|g\|_q^r\,,
\end{equation}
for all large $j$, $p>1,q>1$ and $r>(n-1)/n$ with $ 1/p+1/q=1/r$,
since (\ref{estlem0}) follows by summing for all possible $j\geq 1$.
By a translation argument we need to show that
\begin{equation}\label{estlem11}
\int_{I_P}\bigg|\int  f(x-t)g(x-P_1(t))\rho_0({2^jt})dt \bigg|^r dx
\leq C2^{-\e j}\|f\|_{p}^r\|g\|_q^r\,,
\end{equation}
for all large $j$, $p>1,q>1$ and $r>(n-1)/n$ with $ 1/p+1/q=1/r$,
where $P_1$ is a polynomial of degree $d$ defined by
$P_1(t)=P(t+t_0)-P(t_0)$. It is clear that $P'_1(0)=1$ and
$P_1^{(n)}\neq 0$.  When $|t|\leq 2^{-j+1}$, $|P_1(t)|\leq C_P
2^{-j}$ for some constant $C_P\geq 1$ depending on the coefficients
of $P$. Let $I_P=[a_P, b_P]$ and $A_N$ be defined by
$$
A_N= [a_P + NC_P2^{-j}, a_P+(N+1)C_P2^{-j}] \,\,{\rm for}\,\, N=-1,
\cdots, \frac{(b_P-a_P)\cdot 2^{j}}{C_P}\,.
$$
Notice that for a fixed $x\in I_P$, $x-t, x-P_1(t)$ is in
$A_{N-1}\cup A_N \cup A_{N+1}$ for some $N$. So we can restrict $x$ in
one of $A_N$'s. Now let $T_N(f,g)(x)=1_{A_N}(x)\int
f(x-t)g(x-P_1(t))\rho_0(2^jt)dt $. Due to the restriction of $x$, 
 we only need to show that
\begin{equation}\label{estlem3}
 \|T_N(f,g)\|_r^r
\leq C2^{-\e j}\|f\|_{p}^r\|g\|_q^r\,
\end{equation}
for all large $j\geq 1$, $p>1,q>1$ and $r>(n-1)/n$ with $
1/p+1/q=1/r$, where $f_N = f1_{A_N}$, $g_N=g1_{A_N}$ and $C$ is
independent of $N$.

By inserting absolute values throughout
we get $T_N$ maps $L^p\times L^q$ to $L^r$ with a bound $C2^{-j}$
uniform in $N$, whenever $(1/p, 1/q, 1/r)$ belongs to the closed
convex hull of the points $(1,0,1)$, $(0,1,1)$ and $(0,0,0)$.
Observe that
$P_1'(t)=1+\sum_{k=2}^{d-1}\frac{P^{(k)}_1(0)}{(k-1)!}t^{k-1}$ since
$P'_1(0)=1$. By $P_1^{(n)}(0)\neq 0$ and applying Cauchy-Schwarz
inequality, we obtain for all $j$ large enough,
\begin{eqnarray*}
 & &  \int  \big| T_N(f, g)(x)\big|^{1/2} dx\\
 & \leq & C_P 2^{-j/2} \|T_{N}(f,g)\|_1^{1/2}\\
 & \leq & C_P 2^{-j/2} 2^{(n-1)j/2}\|f\|_1^{1/2}\|g\|_1^{1/2} \,=
 C_P 2^{(n-2)j/2}\|f\|_1^{1/2}\|g\|_1^{1/2}
\,.
\end{eqnarray*}
Hence an interpolation then yields a bound $C2^{-\e j}$ for all
triples of reciprocal exponents within the convex hull of $(1,
\frac{1}{n-1}, \frac{n}{n-1})$, $(\frac{1}{n-1}, 1, \frac{n}{n-1})$,
$(1,0,1)$, $(0,1,1)$ and $(0,0,0)$. This finishes the proof of
(\ref{estlem3}). Therefore we complete the proof of Lemma
\ref{lemmaj}
\end{proof}

Notice that if $P$ is a monomial $t^d$, then the lower bound for $r$
in Lemma \ref{lemmaj} can be improved to $1/2$. This is because $P_1(t)=
P(t+t_0)-P(t_0)=(t+t_0)^d-t_0^d$ has nonvanishing $P_1^{(2)}(0)$ when $1/2
\leq |t_0|\leq 1$. We now  give a counterexample to indicate that the lower bound $(n-1)/n$
for $r$ is sharp in Lemma \ref{lemmaj}. 
\begin{proposition}\label{counterexample}
Let $d, n$ be integers such that $d\geq 2$ and $2\leq n\leq d$.
There is a real polynomial $Q$ of degree $d \geq 2$ whose $n$-th
order derivative does not vanish such that $T_Q$ is unbounded from
$L^p\times L^q$ to $L^r$ for all $p, q>1$ and $r< (n-1)/n$ with
$1/p+1/q=1/r$, where $T_Q$ is the bilinear operator defined by
$T_Q(f,g)(x)=\int f(x-t)g(x-Q(t))\rho(t)dt$.
\end{proposition}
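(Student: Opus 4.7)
The plan is to realise the degeneracy built into Lemma \ref{lemmaj} by choosing an explicit $Q$ whose Jacobian $1-Q'$ vanishes to order exactly $n-1$ at a point in the support of $\rho$, and then to exhibit a one-parameter family of indicator test functions whose relative scaling is dictated by that vanishing. Pick any $t_0\in[1/2,2]$ with $\rho(t_0)\neq 0$ and, after a translation, assume $t_0=1$. Set
\[
 Q(t)=t+(t-1)^n+(t-1)^d
\]
(omit the last term if $d=n$). Then $\deg Q=d$, $Q^{(n)}$ is not identically zero, and near $t=1$
\[
 Q(t)-t=(t-1)^n\bigl(1+O((t-1)^{d-n})\bigr),\qquad 1-Q'(t)=-n(t-1)^{n-1}\bigl(1+O((t-1)^{d-n})\bigr).
\]
Introduce the change of variables $(x,t)\mapsto(u,v):=(x-t,\,x-Q(t))$ on the one-sided neighbourhood $t>1$; it is a local diffeomorphism into $\{u>v\}$ with $t-1\sim(u-v)^{1/n}$, $x\sim u+1+(u-v)^{1/n}$, and $dx\,dt\sim (u-v)^{-(n-1)/n}\,du\,dv$.

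To exploit the singular Jacobian, I would dualise and test the trilinear form
\[
 \La_Q(f,g,h)=\iint f(x-t)\,g(x-Q(t))\,h(x)\,\rho(t)\,dt\,dx
\]
on the indicator family $f=g=\mathbf{1}_{[0,\delta]}$ and $h=\mathbf{1}_{[1,\,1+2\delta^{1/n}]}$, with $\delta>0$ small. For $(u,v)\in[0,\delta]^2$ with $u>v$, the corresponding $t$ lies in $[1,1+\delta^{1/n}]\subset[1/2,2]$, so $\rho(t)\gtrsim 1$ by continuity at $t_0$; and $x(u,v)\in[1,1+2\delta^{1/n}]$, so $h(x(u,v))=1$ throughout. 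Pushing the integral through the change of variables yields
\[
 \La_Q(f,g,h)\gtrsim \int_0^\delta\!\int_0^u\frac{du\,dv}{(u-v)^{(n-1)/n}}\sim\int_0^\delta u^{1/n}\,du\sim \delta^{(n+1)/n},
\]
while $\|f\|_p\|g\|_q\|h\|_{r'}\sim \delta^{1/p+1/q+1/(nr')}=\delta^{1/n+(n-1)/(nr)}$. The resulting ratio is $\delta^{\,1-(n-1)/(nr)}$, which tends to infinity as $\delta\to 0^+$ exactly when $r<(n-1)/n$; by duality this forces $T_Q$ to be unbounded from $L^p\times L^q$ to $L^r$ in that range.

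The main content of the argument is thus choosing the three scales $\delta,\delta,\delta^{1/n}$ in the correct ratio, which is forced by the $(n-1)$-th order vanishing of $1-Q'$: a $\delta\times\delta$ rectangle in $(u,v)$ has width $\delta^{1/n}$ in the $x$ direction, so $h$ must be supported on a $\delta^{1/n}$-interval to capture the entire contribution. The most tedious technical step will be confirming that the higher-order correction $(t-1)^d$ does not perturb the Jacobian or the expression for $x(u,v)$ enough to spoil the scaling, but since $d>n$ makes this correction strictly subordinate in $|t-1|$, it reduces to routine uniform bookkeeping valid in the small-$\delta$ regime.
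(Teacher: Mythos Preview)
Your construction and scaling are on target and match the paper's strategy: both build $Q$ so that $1-Q'$ vanishes to order exactly $n-1$ at a point of $\operatorname{supp}\rho$, and then test on indicators whose side-lengths are dictated by that vanishing. Your change of variables $(x,t)\mapsto(u,v)=(x-t,\,x-Q(t))$ makes the singular Jacobian $|1-Q'(t)|\sim(u-v)^{(n-1)/n}$ transparent and cleanly yields the lower bound $\Lambda_Q(f,g,h)\gtrsim\delta^{(n+1)/n}$.

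The gap is in the last step. You compare $\Lambda_Q(f,g,h)$ with $\|f\|_p\|g\|_q\|h\|_{r'}$ and appeal to ``duality''. But every $r$ under consideration satisfies $r<(n-1)/n<1$, so $r'=r/(r-1)$ is \emph{negative}; the quantity $\|h\|_{r'}$ is undefined and the pairing inequality $|\langle F,h\rangle|\le\|F\|_r\|h\|_{r'}$ is simply not available. So nothing you have written converts the (correct) lower bound on the trilinear form into a lower bound on $\|T_Q(f,g)\|_r$.

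The repair is short, and it is what the paper does in slightly different dress: estimate $\|T_Q(f,g)\|_r$ directly rather than through a pairing. The paper argues geometrically in the $(t,x)$-plane that $T_Q(f,g)(x)\gtrsim\delta$ on an $x$-interval of length $\sim\delta^{1/n}$, whence $\|T_Q(f,g)\|_r\gtrsim\delta\cdot\delta^{1/(nr)}$; comparing with $\|f\|_p\|g\|_q\sim\delta^{1/r}$ forces $r\ge(n-1)/n$. In your coordinates the same conclusion follows once you add the trivial pointwise bound $|T_Q(f,g)(x)|\le C\delta$ (the $t$-integration is confined to an interval of length $\le\delta$): since the integrand has constant sign,
\[
\delta^{(n+1)/n}\lesssim\int|T_Q(f,g)|=\int|T_Q(f,g)|^{1-r}\,|T_Q(f,g)|^{r}\le(C\delta)^{1-r}\,\|T_Q(f,g)\|_r^r,
\]
so $\|T_Q(f,g)\|_r\gtrsim\delta^{1+1/(nr)}$ and the comparison goes through. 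At this point $h$ is superfluous, since $T_Q(f,g)$ is already supported in an $O(\delta^{1/n})$-neighbourhood of $t_0$.
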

\begin{proof}
Let $A$ be a very large number.  We define $Q(t)$ by
\begin{equation}\label{defofQ}
 Q(t) = \frac{1}{Ad!}(t-1)^d + \frac{1}{An!}(t-1)^n  + (t-1)\,.
\end{equation}
It is sufficient to prove that if $T_Q$ is bounded from $L^p\times
L^q$ to $L^r$ for some $p, q>1$ and $1/r=1/p+1/q$, then $r \geq (n-1)/n$.
Suppose there is a constant $C$ such that $\|T_Q(f,g)\|_r\leq
C\|f\|_p\|g\|_q$ for all $f\in L^p$ and $g\in L^q$. Let $\delta$ be
a small positive number. And let $f_\delta=1_{[0,2^n\delta]}$ and
$g_\delta=1_{[1-\delta, 1]}$. Let $D_1$ be the intersection point of
the curves $x=Q(t)+1$ and $x=t+2^n\delta$ in $tx$-plane with $t>1$,
and let $D_2$ be the intersection point of the curves
$x=Q(t)+1-\delta$ and $x=t$ in $tx$-plane with $t>1$. Let $D_1=(t_1,
x_1)$ and $D_2 =(t_2, x_2)$. Then
$$1+2^{1-1/n}(An!)^{1/n}\delta^{1/n} \leq t_1 \leq
1+2(An!)^{1/n}\delta^{1/n}\,\, {\rm and}$$
$$1+2^{-1/n}(An!)^{1/n}\delta^{1/n} \leq t_2 \leq
1+ (An!)^{1/n}\delta^{1/n}\,.$$ Thus we have
$$
1+2^{1-1/n}(An!)^{1/n}\delta^{1/n} + 2^n\delta\leq x_1 \leq
1+2(An!)^{1/n}\delta^{1/n}+2^n\delta\,\, {\rm and}
$$
$$1+2^{-1/n}(An!)^{1/n}\delta^{1/n} \leq x_2 \leq
1+ (An!)^{1/n}\delta^{1/n}\,.$$ When $A$ is large and $\delta$ is
small, any horizontal line between line $x=x_1$ and line $x=x_2$ has
a line segment of length $\delta/2$  staying within the region
bounded by curves $x=t$, $x=Q(x)+1-\delta$, $x=t+2^n\delta$ and
$x=Q(t)+1$. Hence, we have
\begin{equation}\label{lhsbig}
\|T_Q(f_\delta, g_\delta)\|_r^r \geq (\delta/2)^r
(An!)^{1/n}\delta^{1/n}/100\,.
\end{equation}
By the boundedness of $T_Q$, we have
$$
\|T_Q(f_\delta, g_\delta)\|_r^r\, \leq C^r
(2^n\delta)^{r/p}\delta^{r/q}\,= C^r2^{nr/p}\delta\,.
$$
By (\ref{lhsbig}) we have
\begin{equation}\label{sharpest}
\delta^{r}\leq
\frac{1002^{r+nr/p}C^r}{(An!)^{1/n}}\delta^{\frac{n-1}{n}}\,.
\end{equation}
Since $A$ can be chosen to be a very large number and $\delta$ can
be very small, (\ref{sharpest}) implies $r\geq\frac{n-1}{n}$, which
completes the proof of Lemma {\ref{counterexample}}.
\end{proof}

\vspace{0.4cm}

\section{A Decomposition}\label{decomp}
\setcounter{equation}0

Let $\rho_1$ be a standard bump function supported on $[1/2, 2]$. And let
$$\rho(t) = \rho_1(t)\Id_{\{t>0\}} - \rho_1(-t)\Id_{\{t<0\}}\,.$$
It is clear that $\rho$ is an odd function.  To obtain the $L^r$ estimates
for $H_\Gamma$, it is sufficient to get $L^r$ estimates for
$T_\Gamma$ defined by $T_\Gamma =\sum_{j\in {\mathbb Z}} T_{\Gamma,
j}$, where $T_{\Gamma, j}$ is
\begin{equation}\label{defofTGj}
T_{\Gamma,j}(f, g)(x) = \int f(x-t)g(x- t^d)2^j\rho(2^jt)dt\,.
\end{equation}
Let $L$ be a large positive number (larger than $2^{100}$).
 By Lemma \ref{lemmaj}, we have that if $|j| \leq  L$,
$\|T_{\Gamma, j}(f,g)\|_{r} \leq C_L \|f\|_p\|g\|_q$ for all $p, q>1$ and $1/p+1/q=1/r$, where
the operator norm $C_L$ depends on the upper bound $L$. 
Hence in the following we only need to consider
the case when $|j|> L$.  In fact we prove the following theorem.

\begin{theorem}\label{thmlargej}
Let $T_{\Gamma, j}$ be
defined as in (\ref{defofTGj}). Then
the bilinear operator $T_L =\sum_{j\in\ZZ:|j|>L}T_{\Gamma,j}$ is
bounded from $L^2\times L^2$ to $L^1$. 
\end{theorem}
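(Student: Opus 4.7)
The plan is to prove Theorem \ref{thmlargej} by passing to the frequency side, frequency-localizing $f$ and $g$ dyadically, isolating a resonant diagonal piece scale by scale, and summing the resulting oscillatory trilinear form over $|j|>L$. After the change of variable $s=2^{j}t$, the bilinear multiplier of $T_{\Gamma,j}$ becomes
$$m_{j}(\xi,\eta)=\int e^{-2\pi i(2^{-j}s\xi+2^{-jd}s^{d}\eta)}\rho(s)\,ds.$$
Since $\rho$ is odd, $m_{j}(0,0)=0$; repeated integration by parts against the phase then gives rapid decay of $m_{j}$ outside $\{|\xi|\lesssim 2^{j},\,|\eta|\lesssim 2^{jd}\}$, while stationary-phase analysis localizes its mass near the curve $\xi+d s^{d-1}\eta=0$ with $|s|\sim 1$.

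With Littlewood--Paley projections $f=\sum_{k_{1}}f_{k_{1}}$ and $g=\sum_{k_{2}}g_{k_{2}}$ to frequency scales $2^{k_{1}}$ and $2^{k_{2}}$, I would split each interaction $T_{\Gamma,j}(f_{k_{1}},g_{k_{2}})$ into off-diagonal and resonant contributions. Off-diagonal terms with $k_{1}\ll j$ or $k_{2}\ll jd$ benefit from the first-order vanishing $m_{j}(0,0)=0$, while those with $k_{1}\gg j$ or $k_{2}\gg jd$ receive rapid decay via integration by parts; in either case Lemma \ref{lemmaj} provides a one-scale $L^{r}$ bound that, multiplied by the exponential gain, sums absolutely in $(j,k_{1},k_{2})$.

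The core case is the resonant diagonal where $k_{1}=j+O(1)$ and $k_{2}=jd+O(1)$, for which Lemma \ref{lemmaj} alone gives no decay in $j$. By duality, the $L^{2}\times L^{2}\to L^{1}$ bound becomes a trilinear form inequality $|\sum_{|j|>L}\Lambda_{j}^{\mathrm{res}}(f,g,h)|\lesssim\|f\|_{2}\|g\|_{2}\|h\|_{\infty}$. After further localization to frequency cubes and rescaling, each $\Lambda_{j}^{\mathrm{res}}$ reduces to an instance of the trilinear oscillatory integral $\Lambda_{\lambda}$ from the introduction with $\lambda\sim 2^{(d-1)j}$ and vectors ${\bf v}_{1},{\bf v}_{2},{\bf v}_{3}$ and polynomial phase $\varphi$ determined by the monomial geometry. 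The non-degeneracy hypothesis (\ref{nondegcond}) is satisfied because the $d$-th derivative of $t^{d}$ is nonvanishing, so (\ref{Linftyest}) supplies the desired decay $(1+|\lambda|)^{-\e}$ in $L^{\infty}\times L^{\infty}\times L^{\infty}$.

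The main obstacle is that (\ref{Linftyest}) is an $L^{\infty}$-type bound, whereas the theorem requires $L^{2}\times L^{2}\to L^{1}$ with uniform decay in $j$, and as emphasized in the introduction no $L^{2}$ decay version of (\ref{Linftyest}) is available. To bridge this gap I would introduce a Gowers-type quadratic uniformity norm---a $U^{3}$-analogue adapted to the scale $2^{-j}$ and the phase $t^{d}\eta$. Inputs with small quadratic uniformity are controlled directly by the $L^{\infty}$ oscillatory estimate, while inputs with large quadratic uniformity are decomposed via an inverse theorem into superpositions of quadratic characters $e^{2\pi i(\alpha s+\beta s^{2})}$, each of which can be absorbed into the phase $e^{-2\pi i t^{d}\eta}$ and reduced to a Lacey--Thiele type bilinear Hilbert transform bound. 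The hardest step is establishing this quadratic-uniformity dichotomy quantitatively, so that its loss is beaten by the geometric gain $2^{-\e|j|}$ coming from (\ref{Linftyest}); once achieved, summation over $|j|>L$ closes the argument.
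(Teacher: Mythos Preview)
Your decomposition has a genuine gap that causes the whole scheme to miss the main difficulty. You claim that when $k_{1}\gg j$ or $k_{2}\gg jd$ integration by parts gives rapid decay of $m_{j}$. This is false precisely on the set where the real work lies: if $k_{1}-j\approx k_{2}-jd=:m$ with $m\geq 1$ large, the phase $2^{-j}\xi t+2^{-jd}\eta t^{d}$ has a critical point in $|t|\sim 1$, and stationary phase gives only a $2^{-m/2}$ gain together with an oscillatory factor $e^{ic_{d}2^{m}\xi^{d/(d-1)}\eta^{-1/(d-1)}}$. So the ``resonant diagonal'' is not the single point $k_{1}=j+O(1)$, $k_{2}=jd+O(1)$ but the whole ray $\{k_{1}=j+m,\ k_{2}=jd+m:\ m\geq 1\}$; your scheme treats all of these as off-diagonal with rapid decay, which they do not have. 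In the paper this is exactly the content of the splitting $T_{\Gamma,j}=T_{\Gamma,j,1}+T_{\Gamma,j,2}$ and of Theorem~\ref{thmofTm}; the entire Sections~\ref{BFR}--\ref{proofprop2neg} are devoted to summing $\sum_{m\geq 1}\sum_{|j|>L}T_{j,m}$.

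A second, related issue: even after identifying the correct two-parameter family $(j,m)$, the oscillatory parameter is $2^{m}$, not $2^{(d-1)j}$, and the two regimes $|j|\leq m/(d-1)$ and $|j|>m/(d-1)$ behave completely differently. In the first regime a $TT^{*}$ argument (Proposition~\ref{triosc*222}) gives honest $L^{2}$ decay; in the second the trilinear $L^{2}$ decay actually \emph{fails} (see the counterexample after (\ref{La*jmest4221})), because as $|j|\to\infty$ two of the three projection directions coalesce. Your appeal to (\ref{Linftyest}) with $\lambda\sim 2^{(d-1)j}$ does not separate these regimes and would not sum.

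Finally, the uniformity mechanism the paper uses is more specific than a $U^{3}$ inverse theorem with quadratic characters. The relevant obstruction class is $\mathcal{Q}_{1}=\{a\xi^{d/(d-1)}+b\xi:\ |a|\sim 2^{m}\}$ (coming directly from the stationary-phase factor), and the structured case $f_{1}=e^{iq_{1}}$ is handled not by Lacey--Thiele but by a two-dimensional van der Corput estimate (Lemma~\ref{vdcorput2d}, Propositions~\ref{prop2} and~\ref{prop2neg}). Reducing instead to bilinear Hilbert transforms would require absorbing $e^{iq_{1}}$ into a linear modulation, which it is not; it is unclear how your proposed route would close.
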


Clearly Theorem \ref{thm1} follows by Theorem \ref{thmlargej} and Lemma
\ref{lemmaj}. The rest part of the article is devoted to a proof
of Theorem \ref{thmlargej}.\\

We begin the proof of Theorem \ref{thmlargej} by constructing 
an appropriate decomposition of the operator $T_{\Gamma,j} $.  This is done 
by an analysis of the bilinear symbol associated with the operator. \\

Expressing $T_{\Gamma, j}$ in dual frequency variables, we have  
$$
T_{\Gamma, j}(f,g)(x)=\int\int {\wh f}(\xi){\wh g}(\eta)e^{2\pi
i(\xi+\eta)x}\mathfrak{m}_j(\xi, \eta)d\xi d\eta\,,
$$
where the symbol $\mathfrak{m}_j$ is defined by
\begin{equation}\label{defofmj}
\mathfrak{m}_j(\xi, \eta)=\int \rho(t) e^{-2\pi i \left(2^{-j}\xi t + 
 2^{-dj}\eta t^d\right)}\,dt\,.
\end{equation}

First we introduce a resolution of the identity. 
Let $\Theta $ be a Schwarz function supported on $(-1,1)$ such 
that $\Theta(\xi)=1$ if $|\xi|\leq 1/2$. 
Set $\Phi$ to be a Schwartz function satisfying 
$$\wh\Phi(\xi)=\Theta(\xi/2)-\Theta(\xi)\,.$$
Then $\Phi$ is a Schwartz function such that $\wh\Phi$ is supported
on $\{\xi: 1/2 < |\xi| < 2\}$ and 
\begin{equation}\label{defofphi}
\sum_{m\in\mathbb Z}\wh\Phi\big(\frac{\xi}{2^m}\big)=1\,\, {\rm for}
\,\,{\rm all}\,\, \xi\in \mathbb R\backslash \{0\}\, ,
\end{equation}
and for any $m_0\in \mathbb Z$, 
\begin{equation}\label{defofphi0}
{\wh \Phi_{m_0}}(\xi) := \sum_{m=-\infty}^{m_0}
\wh\Phi{\big(\frac{\xi}{2^m}\big)} = \Theta\big(\frac{\xi}{2^{m_0+1}}\big)\,,
\end{equation}
which is a bump function supported on $(-2^{m_0+1}, 2^{m_0+1})$.

From (\ref{defofphi}), we can decompose $T_{\Gamma, j}$ into two parts: 
$T_{\Gamma,j,1}  $  and $T_{\Gamma,j,2}$, where 
$T_{\Gamma,j,1}$ is given by
\begin{equation}\label{defofTj1}
\sum_{m\in \mathbb Z}\sum_{\substack{m'\in \ZZ:\\|m'-m|> 10^d}}
\iint {\wh f}(\xi){\wh g}(\eta)e^{2\pi
i(\xi+\eta)x}\wh\Phi\big(\frac{2^{-j}\xi}{2^m}\big)
\wh\Phi\big(\frac{2^{-d j}\eta}{2^{m'}}\big)\mathfrak{m}_j(\xi,
\eta)d\xi d\eta\,,
\end{equation}
and 
$T_{\Gamma,j,2}$ is defined by
\begin{equation}\label{defofTj2}
\sum_{m\in \mathbb Z}\sum_{\substack{m'\in \ZZ:\\|m'-m|\leq 10^d}}
\iint {\wh f}(\xi){\wh g}(\eta)e^{2\pi
i(\xi+\eta)x}\wh\Phi\big(\frac{2^{-j}\xi}{2^m}\big)
\wh\Phi\big(\frac{2^{-d j}\eta}{2^{m'}}\big)\mathfrak{m}_j(\xi,
\eta)d\xi d\eta\,.
\end{equation}

Define $\mathfrak{m}_d$ by 
\begin{equation}\label{defofmfmd}
\mathfrak{m}_d(\xi, \eta) = \int \rho(t) e^{-2\pi i (\xi t +\eta t^d)} dt\,.
\end{equation}
Clearly $\mathfrak{m}_j(\xi, \eta)= \mathfrak{m}_d(2^{-j}\xi, 2^{-dj}\eta)$.
In $ T_{\Gamma,j, 1}$, the phase function $\phi_{\xi, \eta}(t)=
\xi t +\eta t^d $ does not have any critical point in a neighborhood of 
the support of $\rho$, and therefore a very rapid decay can be obtained by 
integration by parts so that we can show that $\sum_j{T_{\Gamma,j,1}}$ is essentially a finite sum of 
paraproducts (see Section \ref{paraproduct}).  A critical point of the phase function 
may occur in $ T_{\Gamma,j, 2}$ and therefore the method of stationary phase 
must be brought to bear in this case, exploiting in particular the oscillatory term.
This case requires the most extensive analysis. 

\begin{figure}  [ht]
\centering
\includegraphics[scale=.85]{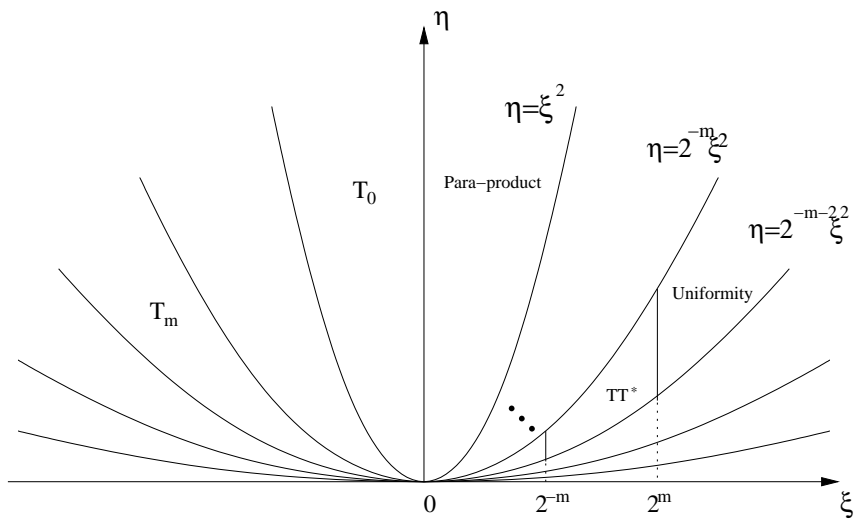}
\caption{The decomposition of $(\xi,\eta)$-plane for $\sum_{m}T_m$ when $d=2$}
\label{figure: Fig1}
\end{figure}

Notice that there are only finitely many $m'$ if $m$ is fixed in (\ref{defofTj2}). Without
loss of generality, we can assume $m'=m$. Then in order to get the $L^r$ estimates for 
$\sum_{j}T_{\Gamma, j,2}$, it suffices to prove the $L^r$ boundedness of 
$\sum_m T_m$,  where $T_m$'s are defined by
\begin{equation}\label{defofTm}
 T_m(f,g)(x)= \sum_{|j|>L}\iint {\wh f}(\xi){\wh g}(\eta)e^{2\pi
i(\xi+\eta)x}\wh\Phi\big(\frac{2^{-j}\xi}{2^m}\big)
\wh\Phi\big(\frac{2^{-d j}\eta}{2^{m}}\big)\mathfrak{m}_j(\xi,
\eta)d\xi d\eta\,.
\end{equation}
It can be proved that $T_0=\sum_{m\leq 0}T_m$ is equal to $\sum_{m\leq 0} O(2^{m/2})
 \Pi_{m}$, where $\Pi_m$ is a paraproduct studied in Theorem {\ref{para0uniest}}. 
This can be done by Fourier series and the cancellation condition of $\rho$ and thus 
$T_0$ is essentially a paraproduct.
We omit the details for it since it is exactly same as those in Section {\ref{paraproduct}}
for the case $\sum_j T_{\Gamma, j,1}$. Therefore, the most difficult term is $\sum_{m\geq 1}T_m$.
For this term, we have the following theorem. 

\begin{theorem}\label{thmofTm}
Let $T_m$ be a bilinear operator defined as in (\ref{defofTm}). Then 
there exists a constant $C$ such that
\begin{equation}\label{Tm221est}
\left\| \sum_{m\geq 1} T_m(f,g)\right\|_1 \leq  C\|f\|_2\|g\|_2
\end{equation}
holds for all $f, g\in L^2$.
\end{theorem}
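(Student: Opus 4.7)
The strategy is to extract the oscillatory phase in $\mathfrak{m}_j$ by stationary phase, reducing $T_m$ to an explicit model operator, and then to prove that this model satisfies an $L^2\times L^2\to L^1$ estimate with decay $2^{-\varepsilon m}$ by exploiting a quadratic-uniformity estimate of Gowers type. The result then follows by summing the geometric series in $m$.

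\emph{Step 1 (stationary phase).} On the support of the $j$-th summand in (\ref{defofTm}) one has $2^{-j}\xi\sim 2^m$ and $2^{-dj}\eta\sim 2^m$, and the phase $\phi(t)=2^{-j}\xi t+2^{-dj}\eta t^d$ has a unique nondegenerate critical point $t_c=t_c(2^{-j}\xi,2^{-dj}\eta)\in\mathrm{supp}(\rho)$ (for the appropriate sign combinations; other sign choices contribute $O(2^{-Nm})$ by repeated integration by parts). The method of stationary phase produces the expansion
\begin{equation*}
\mathfrak{m}_j(\xi,\eta)=2^{-m/2}\,a\bigl(2^{-j}\xi,2^{-dj}\eta\bigr)\,e^{-2\pi i\Psi(2^{-j}\xi,\,2^{-dj}\eta)}+O(2^{-3m/2}),
\end{equation*}
where $a$ is a symbol of order zero on the annular region and $\Psi(u,v)=\tfrac{d-1}{d}\,u\,t_c(u,v)$ is homogeneous of degree one in $(u,v)$. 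The nondegeneracy of $\phi''(t_c)$ translates into a non-vanishing Hessian of $(u,v)\mapsto\Psi(u,v)$, which provides the "quadratic" structure underlying the whole argument.

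\emph{Step 2 (model reduction).} Plugging the expansion into (\ref{defofTm}) reduces $T_m$ to $2^{-m/2}\sum_{|j|>L}U_{j,m}(f,g)$ modulo an error that is trivially summable in both $j$ and $m$. Each $U_{j,m}$ is an oscillatory bilinear multiplier of the form
\begin{equation*}
U_{j,m}(f,g)(x)=\iint\wh f(\xi)\wh g(\eta)\,e^{2\pi i(\xi+\eta)x-2\pi i\Psi(2^{-j}\xi,\,2^{-dj}\eta)}\,\chi_{j,m}(\xi,\eta)\,d\xi\, d\eta,
\end{equation*}
with $\chi_{j,m}$ supported in $\{|\xi|\sim 2^{m+j},\,|\eta|\sim 2^{m+dj}\}$. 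Since the output frequency $\xi+\eta$ of $U_{j,m}$ lies in shells of radius $\sim 2^{m+\max(j,dj)}$, essentially disjoint across $j$, an $H^1$-type Littlewood--Paley argument controls $\|\sum_{j} U_{j,m}(f,g)\|_1$ in terms of the corresponding square function and ultimately reduces (\ref{Tm221est}) to the single-scale bound
\begin{equation*}
\|U_{j,m}(f,g)\|_1 \leq C\,2^{-\varepsilon m}\,\|f\|_2\|g\|_2
\end{equation*}
uniform in $j$ with $|j|>L$, for some $\varepsilon>0$.

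\emph{Step 3 (quadratic uniformity).} The naive bound $|\mathfrak{m}_j|\lesssim 2^{-m/2}$ combined with Plancherel yields only a loss-less estimate $O(1)$ in $m$, which is not summable; this is precisely the failure of $L^2$ decay for the trilinear oscillatory integral $\Lambda_\lambda$ discussed in the introduction. Genuine decay $2^{-\varepsilon m}$ is obtained by dualizing into the trilinear form $\langle U_{j,m}(f,g),h\rangle$ and showing, by the nondegeneracy of $\Psi$ (the local analogue of (\ref{nondegcond})), that this form is dominated by a Gowers $U^3$-type quadratic-uniformity norm of at least one of the inputs. An inverse/structure argument in the spirit of \cite{G} then interpolates this $U^3$-control with the trivial $L^2$ bound to produce the desired single-scale estimate and hence (\ref{Tm221est}).

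\emph{Main obstacle.} Step 3 is the heart of the proof. The difficulty is precisely that one must bridge between $L^\infty$-decay (automatic from stationary phase) and the $L^2$-decay that is known to fail in this context. Doing so requires identifying the "non-uniform" pieces of $\wh f$ and $\wh g$---those correlating with a quadratic character $e^{2\pi iQ(\xi)}$---and handling them separately by a change of variables that linearizes $\Psi$ on each correlated region, before recombining the pieces. This quadratic time--frequency analysis, rather than any routine manipulation of paraproducts, is what the rest of the paper must carry out.
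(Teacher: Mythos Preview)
Your outline has the right overall philosophy---stationary phase to expose an oscillatory bilinear multiplier, then a ``quadratic'' uniformity argument to extract decay---but Step~2 contains a genuine gap, and Step~3 covers only half of what is actually needed.

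The single--scale claim $\|U_{j,m}(f,g)\|_1\le C\,2^{-\varepsilon m}\|f\|_2\|g\|_2$ \emph{uniformly in $j$} is not what one can prove, and it is not what the paper establishes. The decay that is actually available depends jointly on $j$ and $m$: with $\gamma_{j,m}$ the $L^2\times L^2\to L^1$ bound for the piece $T_{j,m}$, one obtains
\[
\gamma_{j,m}\;\lesssim\;
\begin{cases}
2^{\,((d-1)|j|-m)/8} & \text{if } |j|\le m/(d-1),\\[2pt]
\max\bigl\{2^{\,(m-(d-1)|j|)/3},\,2^{-\varepsilon_0 m}\bigr\} & \text{if } |j|\ge m/(d-1).
\end{cases}
\]
At the diagonal $|j|\approx m/(d-1)$ this is only $O(1)$, with no gain in $m$. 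Consequently the double sum cannot be handled by first summing a geometric series in $m$ after an $H^1$/square--function reduction in $j$. The paper instead applies the triangle inequality over both indices and then exploits the Littlewood--Paley orthogonality of the \emph{input} pieces $f_{j,m},g_{j,m}$ (frequency--supported where $|\xi|\sim 2^{j+m}$, $|\eta|\sim 2^{dj+m}$) together with the two--parameter decay of $\gamma_{j,m}$ away from the diagonal.

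There is a related omission in Step~3. The two regimes $|j|\le m/(d-1)$ and $|j|\ge m/(d-1)$ require different arguments. In the first regime the paper runs a $TT^*$/H\"ormander oscillatory--integral estimate (Proposition~\ref{triosc*222}); the quadratic--uniformity machinery is not used there at all. The counterexample following that proposition shows precisely that this $L^2$ method breaks down once $|j|>m/(d-1)$, and only then does uniformity enter (Propositions~\ref{prop1}, \ref{prop2}, \ref{prop1neg}, \ref{prop2neg}). Your proposal treats only the second regime. Also, the uniformity actually used is lighter than a $U^3$ inverse theorem: one tests $f_1$ (respectively $f_2$) against the explicit one--parameter family of phases $a\xi^{d/(d-1)}+b\xi$ (respectively $a\eta^{-1/(d-1)}+b\eta$) and applies a direct dichotomy lemma (Theorem~\ref{uniformitythm}), with no structural or inverse result invoked.
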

 
A delicate analysis is required for proving this theorem. We will prove it 
in Subsection \ref{proofofthmTm}.  Theorem \ref{thmlargej} follows from 
Theorem \ref{thmofTm} and the boundedness of $\sum_j T_{\Gamma, j,1}$.
The rest of the article is organized as 
follows. In Section \ref{paraproduct}, the $L^r$-boundedness will be established 
for $\sum_j T_{\Gamma, j,1}$. Some crucial bilinear restriction estimates will
appear in Section \ref{BFR} and as a consequence Theorem \ref{thmofTm} follows.
Sections \ref{SphaseTri}-\ref{proofprop2neg} are devoted to a proof of the bilinear restriction 
estimates.

\section{Paraproducts and Uniform Estimates}\label{paraproduct}

In this section we prove that $\sum_j{T_{\Gamma,j,1}}$ is essentially a finite sum of 
certain paraproducts bounded from $L^p\times L^q$ to $L^r$.

First let us introduce the paraproduct encountered in our problem.   
Let $j\in \mathbb Z$,  $L_1, L_2$ be positive integers and $M_1,
M_2$ be integers.
$$ \w_{1,j}=[2^{L_1j+M_1}/2, 2\cdot2^{L_1 j+M_1}]$$
and
$$\w_{2,j}=[-2^{L_2 j+M_2}, 2^{L_2 j+M_2}]\,.$$
Let $\Phi_1$ be a Schwartz function whose Fourier transform is a
standard bump function supported on a small neighborhood of $[1/2, 2]$
or $[-2, -1/2]$, and $\Phi_2$ be a
Schwartz function whose Fourier transform is a standard bump function
supported on $[-1, 1]$ and $\wh\Phi_2(0)=1 $. For $\ell\in\{1,2\}$
and $n_1, n_2\in \mathbb Z$, define $\Phi_{\ell, j, n_\ell}$ by
$$
\wh\Phi_{\ell, j, n_\ell}(\xi)= \big(e^{2\pi i n_\ell (\cdot)} \wh\Phi_\ell (\cdot)\big) \bigg(\frac{\xi}{2^{L_\ell j +M_\ell}}\bigg)\,.
$$
It is clear that ${\wh\Phi_{\ell,j,n_\ell}}$ is supported on
$\w_{\ell,j}$. For locally integrable functions $f_\ell$'s, we
define $f_{\ell,j}$'s by
$$
f_{\ell,j, n_\ell}(x)=f_{\ell}*\Phi_{\ell, j, n_\ell}(x)\,.
$$
We now define a paraproduct to be
\begin{equation}\label{defofpara0}
\Pi_{L_1, L_2, M_1, M_2, n_1, n_2}(f_1, f_2)(x) = \sum_{j\in\mathbb Z}
 \prod_{\ell =1}^2 f_{\ell,j, n_\ell}(x) \,.
\end{equation}

For this paraproduct, we have the following uniform estimates. 

\begin{theorem}\label{para0uniest}
For any $p_1>1$, $p_2>1$ with $1/p_1+1/p_2=1/r$, there exists a
constant $C$ independent of $M_1, M_2, n_1, n_2$ such that
\begin{equation}\label{uniest1}
\big\|\Pi_{L_1, L_2, M_1, M_2, n_1, n_2}(f_1, f_2)\big\|_r \leq
  C\big(1+|n_1|\big)^{10}\big(1+|n_2|\big)^{10} \|f_1\|_{p_1}\|f_2\|_{p_2}\,,
\end{equation}
for all $f_1\in L^{p_1}$ and $f_2\in L^{p_2}$.
\end{theorem}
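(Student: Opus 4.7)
The plan is to treat $\Pi = \Pi_{L_1, L_2, M_1, M_2, n_1, n_2}$ as a classical Coifman--Meyer paraproduct, tracking the operator-norm dependence on $(|n_1|, |n_2|)$ through the polynomial growth of the translated Schwartz kernels $\Phi_\ell(\cdot + n_\ell)$. The two structural properties driving the proof are: (i) $\widehat{\Phi_1}$ is supported in a fixed annulus with $\widehat{\Phi_1}(0)=0$, so that $\{f_{1,j,n_1}\}_j$ is a Littlewood--Paley-type decomposition of $f_1$; (ii) $\widehat{\Phi_2}$ is a bump near the origin normalized by $\widehat{\Phi_2}(0)=1$, so that $\{f_{2,j,n_2}\}_j$ is a family of smooth low-frequency truncations dominated by the Hardy--Littlewood maximal function.

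All the polynomial $n_\ell$-dependence is extracted from the single kernel estimate
\begin{equation*}
|\Phi_{\ell,j,n_\ell}(x)| \le C_N (1+|n_\ell|)^N \cdot 2^{L_\ell j + M_\ell}\bigl(1+2^{L_\ell j + M_\ell}|x|\bigr)^{-N},
\end{equation*}
valid for every $N$ by the Schwartz decay of $\Phi_\ell$ together with the elementary inequality $(1+|y+n_\ell|)^{-N} \le C_N(1+|n_\ell|)^N (1+|y|)^{-N}$. This bound yields the pointwise maximal domination $\sup_j |f_{2,j,n_2}(x)| \le C(1+|n_2|)^{10} Mf_2(x)$, and, combined with $\widehat{\Phi_1}(0)=0$ inside the Calderón--Zygmund vector-valued framework, the square-function inequality
\begin{equation*}
\Bigl\| \Bigl(\sum_j |f_{1,j,n_1}|^2\Bigr)^{1/2}\Bigr\|_{p_1} \le C(1+|n_1|)^{10} \|f_1\|_{p_1}, \qquad 1 < p_1 < \infty.
\end{equation*}

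With these two ingredients in hand, for the range $r>1$ I dualize: each summand $f_{1,j,n_1}\,f_{2,j,n_2}$ has Fourier support in $\omega_{1,j}+\omega_{2,j}$ which is lacunary in $j$, so pairing with $h\in L^{r'}$ I may replace $h$ by its Littlewood--Paley piece $h_j$ localized to a slight enlargement of this frequency set. Applying Cauchy--Schwarz in $j$ followed by three-fold Hölder in $x$ gives
\begin{equation*}
|\langle\Pi(f_1,f_2),h\rangle| \le \Bigl\| \bigl(\sum_j |f_{1,j,n_1}|^2\bigr)^{1/2}\Bigr\|_{p_1} \bigl\|\sup_j |f_{2,j,n_2}|\bigr\|_{p_2} \Bigl\| \bigl(\sum_j |h_j|^2\bigr)^{1/2}\Bigr\|_{r'},
\end{equation*}
and the three factors are controlled by $(1+|n_1|)^{10}\|f_1\|_{p_1}$, $(1+|n_2|)^{10}\|f_2\|_{p_2}$, and $\|h\|_{r'}$ respectively. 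The main obstacle is the endpoint range $1/2 < r \le 1$, where one must replace $L^{r'}$ duality by the Hardy space $H^r$ (atomic decomposition plus the vector-valued Littlewood--Paley characterization of $H^r$); the same chain of inequalities then goes through without any further loss in $(n_1, n_2)$. Apart from this bookkeeping the argument is routine, and the exponent $10$ is loose, dictated only by the number of Schwartz moments of $\Phi_\ell(\cdot+n_\ell)$ invoked in the estimates above.
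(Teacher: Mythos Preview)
Your argument has a genuine gap at the lacunarity claim, and this is precisely the point where the $M_1,M_2$--independence is at stake.

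You assert that the Fourier supports $\omega_{1,j}+\omega_{2,j}$ are lacunary in $j$, so that the dual pieces $h_j$ obey a Littlewood--Paley square-function estimate. But $\omega_{1,j}$ is an annulus at height $2^{L_1 j+M_1}$ while $\omega_{2,j}$ is the centered interval $[-2^{L_2 j+M_2},2^{L_2 j+M_2}]$, and nothing in the hypotheses forces $L_2 j+M_2 \le L_1 j+M_1 - C$. Whenever $L_2 j+M_2 \ge L_1 j+M_1$ the sumset $\omega_{1,j}+\omega_{2,j}$ contains the origin and is essentially $[-2^{L_2 j+M_2+1},2^{L_2 j+M_2+1}]$; as $j$ varies over this regime these sets are nested, and a point near $0$ lies in infinitely many of them. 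Concretely, if $L_1=L_2$ and $M_2>M_1$ this happens for \emph{every} $j$; if $L_1\neq L_2$ it happens for a half-line of $j$'s. In either case $\bigl\|\bigl(\sum_j|h_j|^2\bigr)^{1/2}\bigr\|_{r'}$ is not controlled by $\|h\|_{r'}$, and the bound you would extract by decomposing $S_{2,j}f_2$ into Littlewood--Paley pieces carries a constant of order $|M_2-M_1|$, destroying the uniformity which is the whole content of the theorem.

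The missing idea for $r>1$ is what the paper calls the \emph{telescoping} argument: summation by parts gives
\[
\sum_j \bigl(f_1*\Phi_{1,j,n_1}\bigr)\bigl(f_2*\Phi_{2,j,n_2}\bigr)
= -\sum_j \bigl(f_1*\widetilde\Phi_{1,j,n_1}\bigr)\bigl(f_2*\widetilde\Phi_{2,j,n_2}\bigr)
\]
(up to harmless boundary terms), where now $\widehat{\widetilde\Phi_1}$ is a bump near the origin and $\widehat{\widetilde\Phi_2}$ is supported in an annulus. This exchanges the roles of the two factors and produces a product whose Fourier support \emph{is} lacunary at scale $2^{L_2 j+M_2}$ exactly in the regime $L_2 j+M_2 > L_1 j+M_1$ where your direct argument fails. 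Splitting the $j$-sum according to which of $L_1 j+M_1$, $L_2 j+M_2$ dominates and applying your argument on one half and its telescoped form on the other yields the uniform bound.

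For $1/2<r\le 1$ the same structural obstruction persists, and your one-line reduction to $H^r$ duality does not address it. The paper does not prove this range here but defers to \cite{LiPara}, noting explicitly that it requires a time-frequency analysis; a bare appeal to atomic decompositions is not enough to recover the $M_1,M_2$--independent constant.
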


The $r>1$ case can be handled by a telescoping argument. The $r<1$ case 
is more complicated and it requires a time-frequency analysis. 
A proof of Theorem \ref{para0uniest} can be found in \cite{LiPara}. 
The constant $C$ in Theorem \ref{para0uniest} may depend on $L_1, L_2$. It is easy to see 
that $C$ is ${O}(\max\{2^{L_1}, 2^{L_2}\})$. It is
possible to get a much better upper bound such as $O\big(\log(1+
\max\{L_2/L_1, L_1/L_2\} )\big)$ by tracking the constants carefully in the proof in \cite{LiPara}. 
But we do not need the sharp constant in this article. 
The independence on $M_1, M_2$ is the most important issue here.\\

We now return to $\sum_j T_{\Gamma, j, 1}$.  This sum can be written as 
$ T_{L, 1} + T_{L,2} $, where $T_{L, 1}$ is a bilinear operator defined by
$$
\sum_{|j|>L}\sum_{m\in \mathbb Z}
\sum_{{\substack {m'\in \mathbb Z\\m'< m-10^d}}}
\int\int {\wh f}(\xi){\wh g}(\eta)e^{2\pi
i(\xi+\eta)x}\wh\Phi\big(\frac{2^{-j}\xi}{2^m}\big)
\wh\Phi\big(\frac{2^{-d j}\eta}{2^{m'}}\big) \mathfrak{m}_j(\xi,
\eta)d\xi d\eta\,,
$$
and $T_{L,2}$ is a bilinear operator given by 
$$
\sum_{|j|>L}\sum_{m'\in \mathbb Z}
\sum_{{\substack {m\in \mathbb Z\\ m < m'-10^d }}}
\int\int {\wh f}(\xi){\wh g}(\eta)e^{2\pi
i(\xi+\eta)x}\wh\Phi\big(\frac{2^{-j}\xi}{2^m}\big)
\wh\Phi\big(\frac{ 2^{-d j}\eta }{2^{m'}}\big) \mathfrak{m}_j(\xi,
\eta)d\xi d\eta\,.
$$

\underline{\it {Case $T_{L,1}$} }.  
We now prove that $T_{L,2}$ can be reduced to the paraproducts studied in \cite{LiPara}.
Indeed, if $|\xi| > 5^d |\eta|$, then let $\ti{\mathfrak m}$ be
$$
\ti{\mathfrak m}(\xi, \eta) = \mathfrak{m}_d(\xi, \eta) - \int\rho(t)e^{-2\pi i \xi t}dt\,.
$$
Clearly, if $|\xi|>5^d|\eta|$,
\begin{equation}\label{d1est}
 \big|D^\alpha \ti{\mathfrak m}(\xi, \eta)\big| \leq \frac{C_{\alpha, N}}{(1+\sqrt{|\xi|^2+|\eta|^2})^N}
\,\, {\rm for}\,\, {\rm all}\,\,N\in \{0\}\cup{\mathbb N}\,.
\end{equation}
and
\begin{equation}\label{m1est}
\big|\ti{\mathfrak m}(\xi, \eta)\big| \leq C_d \min\{1, |\eta|\}\,.
\end{equation}
Thus if $m'\leq m-10^d$,  then we have
$$
 \ti {\mathfrak m}(2^m\xi, 2^{m'}\eta) \wh\Phi(\xi)\wh\Phi(\eta) = \sum_{n_1,n_2\in\mathbb Z}
 C^{(1)}_{n_1, n_2} e^{2\pi i (n_1 \xi + n_2\eta)}\,,
$$
where $C^{(1)}_{n_1, n_2}$ is the Fourier coefficient. From
(\ref{d1est}) and (\ref{m1est}), we have
\begin{equation}\label{fouriercoe1}
|C^{(1)}_{n_1, n_2}|\leq \frac{C \min\{1,2^{m'/2}\}}{(1+ \sqrt{n_1^2+n_2^2})^{N} (1+2^m+2^{m'})^N}\,\, {\rm for}\,\, {\rm all}\,\,N\,.
\end{equation}
And notice that $\rho$ is an odd function, then we have
$$
\bigg(\int \rho(t)e^{-2\pi i 2^m\xi t}dt\bigg)\wh\Phi\big(\xi\big)
=\sum_n C^{(1)}_n e^{2\pi i n\xi}\,,
$$
where the Fourier coefficient $C^{(1)}_n$ satisfies
\begin{equation}\label{fconst1}
\big| C^{(1)}_n\big|\leq \frac{C_N\min\{2^{m/2},
1\}}{(1+|n|+2^m)^N}\,\,{\rm for}\,\, {\rm all}\,\, N\,.
\end{equation}
Thus $T_{L,1}$ equals to a sum of paraproducts
\begin{equation}\label{paraproduct1}
\sum_{j<-B}\sum_{m\in \mathbb Z}\sum_{{\substack {m'\in \mathbb Z\\
 m'\leq m-10^d }}}\!\!\!\! \bigg(\sum_{n_1, n_2\in \mathbb Z}C^{(1)}_{n_1, n_2}
 f_{j,m,n_1}(x)
g_{j,m',n_2}(x)  + \sum_{n\in \mathbb Z} C^{(1)}_n
f_{j,m,n}(x)g_{j,m',0}(x)\bigg)\,,
\end{equation}
where $f_{j,m,n}$ is a function whose Fourier transform is
$${\wh f_{j,m,n}}(\xi)= \wh f(\xi) \wh\Phi\big(2^{-j-m}\xi\big)
 e^{2\pi i n 2^{-j-m}\xi} \,,
$$
$g_{j,m',n}$ is a function whose Fourier transform is
$$
{\wh g_{j,m',n}}(\eta)= \wh g(\eta)
\wh\Phi\big(2^{-dj-m'}\eta\big)
 e^{2\pi i n 2^{-dj-m'}\eta}\,.
$$
\begin{remark}
Actually, in the definition of $f_{j,m,n}$ and $g_{j,m',n}$,
$\wh\Phi$ should be a Schwartz function supported in some
neighborhood of $\wh\Phi$ and it is identically equal to $1$
on the support of $\wh\Phi$. We abuse the notions here. But
it does no harm to us since the propety of the function does not
change significantly.
\end{remark}

Notice that the Fourier transform of $\sum_{
 m'< m-10^d}  g_{j,m',0}$ is a function supported
 in  the interval $I_{j,m,2}=[- 2^{dj+m}, 2^{dj+m} ]$. We denote
 $\sum_{m'<m-10^d}g_{j,m',0}$ by $g_{j,m}$. Thus by the definition of $\Phi$,
${\wh g_{j,m}} = \wh g
 \wh\Phi_m$ where $\Phi_m$ is a standard bump function supported in
 $I_{j,m,2}$. Let $I_{j,m,1}=[2^{j+m}/2, 2^{j+m+1}]$. Then ${\wh
 f_{j,m,n}}$ is supported in $I_{j,m,1}$. Hence due to the fast decay of the Fourier coefficients (\ref{fouriercoe1}) and (\ref{fconst1}), we actually run into two paraproducts in this case. One of them is
\begin{equation}\label{defpiB11}
\Pi^{(1)}_{L,1}(f, g)(x)= \sum_{j}f_{j,m,n_1}(x)g_{j,m',n_2}(x)\,.
\end{equation}
The other is
\begin{equation}\label{defofSb1}
\Pi^{(2)}_{L,1}(f,g)(x)=\sum_{j} f_{j,m,n}(x) g_{j,m}(x)\,.
\end{equation}

The $L^r$ estimates of these paraproducts follow from Theorem \ref{para0uniest}.
In fact, 
 for all $p,q>1$ and
 $1/p+1/q=1/r$, applying Theorem \ref{para0uniest}, we obtain  the $L^p\times L^q\rightarrow L^r$ estimates uniformly in  $m, m'$ with the operator 
norms $O((1+|n_1|+|n_2|)^{20})$ and
$O((1+|n|)^{10})$ for  $\Pi^{(1)}_{L,1}$ and
$\Pi^{(2)}_{L,1}$ respectively. The fast decay estimates of the Fourier coefficients
(\ref{fouriercoe1}) and (\ref{fconst1}) then allow us to conclude the
desired $L^r$ boundedness of $T_{L,1}$. \\

\underline{{\it Case $T_{L, 2} $}}.
This case is similar to the case $T_{L, 1}$. In fact, 
if $|\eta| > 5^d |\xi|$, then let $\ti {\mathfrak m}$ be
$$
\ti {\mathfrak m}(\xi, \eta) = {\mathfrak m}_d(\xi, \eta) -\int\rho(t)e^{-2\pi i \eta t^d}dt\,.
$$
Clearly, if $|\eta|>5^d|\xi|$,
\begin{equation}\label{d1estB12}
 \big|D^\alpha \ti {\mathfrak m}(\xi, \eta)\big| \leq \frac{C_{\alpha, N}}{(1+\sqrt{|\xi|^2+|\eta|^2})^N}
\,\, {\rm for}\,\, {\rm all}\,\,N\in \{0\}\cup{\mathbb N}\,.
\end{equation}
and
\begin{equation}\label{m1estB12}
\big|\ti {\mathfrak m}(\xi, \eta)\big| \leq C_d \min\{1, |\xi|\}\,.
\end{equation}
Thus if $m\leq m'-10^d$,  then we have
$$
 \ti {\mathfrak m}(2^m\xi, 2^{m'}\eta) \wh\Phi(\xi)\wh\Phi(\eta) = \sum_{n_1,n_2\in\mathbb Z}
 C^{(2)}_{n_1, n_2} e^{2\pi i (n_1 \xi + n_2\eta)}\,,
$$
where $C^{(2)}_{n_1, n_2}$ is the Fourier coefficient. From
(\ref{d1estB12}) and (\ref{m1estB12}), we have
\begin{equation}\label{fouriercoe1B12}
|C^{(2)}_{n_1, n_2}|\leq \frac{C \min\{1,2^{m/2}\}}{(1+ \sqrt{n_1^2+n_2^2})^{N} (1+2^m+2^{m'})^N}\,\, {\rm for}\,\, {\rm all}\,\,N\,.
\end{equation}
And notice that $\rho$ is an odd function, then we have
$$
\bigg(\int \rho(t)e^{-2\pi i 2^{m'}\eta t^d}dt\bigg)\wh\Phi\big(\xi\big)
=\sum_n C^{(2)}_n e^{2\pi i n\eta}\,,
$$
where the Fourier coefficient $C^{(2)}_n$ satisfies
\begin{equation}\label{fconst1B12}
\big| C^{(2)}_n\big|\leq \frac{C_N\min\{2^{m'/2},
1\}}{(1+|n|+2^{m'})^N}\,\,{\rm for}\,\, {\rm all}\,\, N\,.
\end{equation}
Thus $T_{L,2}$ equals to a sum of paraproducts
\begin{equation}\label{paraproduct1B12}
\sum_{j<-B}\sum_{m'\in \mathbb Z}\sum_{{\substack {m\in \mathbb Z\\
 m\leq m'-10^d }}} \bigg(\sum_{n_1, n_2\in \mathbb Z}C^{(2)}_{n_1, n_2}
 f_{j,m,n_1}(x)
g_{j,m',n_2}(x)  + \sum_{n\in \mathbb Z} C^{(2)}_n
f_{j,m,0}(x)g_{j,m',n}(x)\bigg)\,.
\end{equation}
Observe that the Fourier transform of $\sum_{
 m< m'-10^d}  f_{j,m,0}$ is a function supported
 in  the interval $I_{j,m',1}=[-2^{j+m'}, 2^{j+m'}]$. We denote
 $\sum_{m<m'-10^d}f_{j,m,0}$ by $f_{j,m'}$. Thus by the definition of $\Phi$,
${\wh f_{j,m'}} = \wh f
 \wh\Phi_{m'}$ where $\Phi_{m'}$ is a standard bump function supported in
 $I_{j,m',1}$. Let $I_{j,m',2}=[2^{j+m'}/2, 2^{j+m'+1}]$. Then ${\wh
 g_{j,m',n}}$ is supported in $I_{j,m',2}$. Hence due to the rapid decay of the Fourier coefficients (\ref{fouriercoe1B12}) and (\ref{fconst1B12}), we actually encounter two paraproducts in this case. One of them is
\begin{equation}\label{defpiB12}
\Pi^{(1)}_{L,2}(f, g)(x)= \sum_{j}f_{j,m,n_1}(x)g_{j,m',n_2}(x)\,.
\end{equation}
The other is
\begin{equation}\label{defofSb1B12}
\Pi^{(2)}_{L,2}(f,g)(x)=\sum_{j} f_{j,m'}(x) g_{j,m',n}(x)\,.
\end{equation}

Applying Theorem \ref{para0uniest}, for all $p,q>1$ and
 $1/p+1/q=1/r$,  we have
the $L^p\times L^q\rightarrow L^r$ estimates uniformly in  $m, m'$ with the operator norms 
$O((1+|n_1|+|n_2|)^{20})$ and
$O((1+|n|)^{10})$ for  $\Pi^{(1)}_{L,2}$ and
$\Pi^{(2)}_{L, 2}$ respectively. Then the
desired $L^r$ estimates of $T_{L, 2}$ follow due to
the fast decay of the Fourier coefficients (\ref{fouriercoe1B12}) and (\ref{fconst1B12}). \\

\section{Bilinear Fourier Restriction Estimates}\label{BFR}
\setcounter{equation}0

Let $d\geq 2, m\geq 0, j\in \mathbb Z $. We define a bilinear
Fourier restriction operator of $f, g$ by
\begin{equation}\label{defofbfr}
{\mathcal B}_{j, m} (f, g)(x)= 
2^{-(d-1)j/2}
\int_{\mathbb R} \!R_\Phi f(2^{-(d-1)j} x - 2^m t) R_\Phi g(x - 2^{m} t^d ) \rho(t) dt
 \,\,{\rm if} \,\, j\geq 0\,
\end{equation}
and
\begin{equation}\label{defofbfrjneg}
{\mathcal B}_{j, m} (f, g)(x)= 
2^{(d-1)j/2}
\int_{\mathbb R} \!R_\Phi f( x - 2^m t) R_\Phi g(2^{(d-1)j}x - 2^{m} t^d ) \rho(t) dt
 \,\,{\rm if} \,\, j< 0\,, 
\end{equation}
where $R_\Phi f$ and $R_\Phi g$ are the Fourier (smooth) restrictions of $
f, g$  on the support of $\wh\Phi$ respectively. More precisely, 
$R_\Phi f,  R_\Phi g$ are given by

\begin{equation}\label{defoff1}
 \wh {R_\Phi f  }(\xi)= \wh {f}(\xi) \wh\Phi(\xi)
\end{equation}
\begin{equation}\label{defofg1}
 \wh { R_\Phi g}(\xi)= \wh {g}(\xi) \wh\Phi(\xi)
\end{equation}

By inserting absolute values throughout and applying Cauchy-Schwarz
inequality, the boundedness of ${\mathcal B}_{j, m}$ from $L^2\times
L^2$ to $L^1$ follows immediately. Moreover, since the Fourier
transform of $f, g$ are restricted on the support of $\wh\Phi$, we
actually can improve the estimate. Let us state the improved
estimates by the following theorems, which are of independent
interest.

\begin{theorem}\label{thmBFR1m}
Let $d\geq 2$ and ${\mathcal B}_{j, m}$ be defined as in (\ref{defofbfr}) 
and (\ref{defofbfrjneg}).
If $L\leq |j| \leq m/(d-1)$,
then there exists a constant $C$ independent of $j, m$ such that
\begin{equation}\label{l221estBFR1m}
\big\|{\mathcal B}_{j, m} (f, g)\big\|_1\leq
 C 2^{\frac{(d-1)|j|-m}{8}} \|f\|_{2}\|g\|_{2}\,,
\end{equation}
holds for all $f, g\in L^2$.
\end{theorem}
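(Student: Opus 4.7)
The plan is to extract oscillation from the bilinear symbol via stationary phase, and then parlay that symbol oscillation into the desired $L^1$ decay through a $TT^*$-type argument exploiting the nondegeneracy of the resulting critical-value function. I treat the case $j\geq 0$; the case $j<0$ is symmetric. Writing $\mathcal{B}_{j,m}(f,g)$ on the Fourier side gives
$$\mathcal{B}_{j,m}(f,g)(x)=2^{-(d-1)j/2}\iint \wh{R_\Phi f}(\xi)\wh{R_\Phi g}(\eta)\, e^{2\pi i(2^{-(d-1)j}\xi+\eta)x}\, K(\xi,\eta)\,d\xi\,d\eta,$$
with $K(\xi,\eta)=\int\rho(t)\,e^{-2\pi i\,2^m(\xi t+\eta t^d)}\,dt$. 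On the support of $\wh\Phi(\xi)\wh\Phi(\eta)$, the phase has a unique nondegenerate critical point $t_*(\xi,\eta)$ with $t_*^{d-1}=-\xi/(d\eta)$, lying in $\operatorname{supp}\rho$. Stationary phase therefore yields
$$K(\xi,\eta)=2^{-m/2}a(\xi,\eta)\,e^{-2\pi i\,2^m\Psi(\xi,\eta)}+O(2^{-m}),\qquad \Psi(\xi,\eta)=\tfrac{d-1}{d}\,\xi\, t_*(\xi,\eta),$$
where $a$ is smooth and $\det \partial^2\Psi\neq 0$.

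Two benchmark bounds then frame the argument. First, inserting absolute values and using Cauchy-Schwarz in $t$, followed by the change of variables $u=2^{-(d-1)j}x-2^m t$ and $v=x-2^m t^d$ in the double integral, yields the trivial bound $\|\mathcal{B}_{j,m}(f,g)\|_1\lesssim \|f\|_2\|g\|_2$. Second, Plancherel in $x$ reduces $\|\mathcal{B}_{j,m}(f,g)\|_2^2$ to a four-fold integral of $\wh{R_\Phi f}\wh{R_\Phi g}$ with the delta constraint $2^{-(d-1)j}\xi_1+\eta_1=2^{-(d-1)j}\xi_2+\eta_2$; combining $|K|\lesssim 2^{-m/2}$, Cauchy-Schwarz in $\eta_1$, and $\|\wh{R_\Phi f}\|_1\lesssim \|f\|_2$ (from the annular frequency support) gives
$$\|\mathcal{B}_{j,m}(f,g)\|_2 \lesssim 2^{-(d-1)j/2-m/2}\,\|f\|_2\|g\|_2.$$

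The key step is passing from these two benchmarks to $L^1$. Because $\mathcal{B}_{j,m}(f,g)$ has no spatial compact support, a direct Cauchy-Schwarz of the form $\|\cdot\|_1\leq|\mathrm{supp}|^{1/2}\|\cdot\|_2$ is unavailable, and the $L^1$ gain must come from oscillation. By duality,
$$\|\mathcal{B}_{j,m}(f,g)\|_1 = \sup_{\|h\|_\infty=1}\int h(x)\,\mathcal{B}_{j,m}(f,g)(x)\,dx,$$
and after substituting the stationary-phase expansion and integrating in $x$ the trilinear form becomes
$$2^{-(d-1)j/2-m/2}\iint \wh{R_\Phi f}(\xi)\wh{R_\Phi g}(\eta)\,a(\xi,\eta)\,e^{-2\pi i\,2^m\Psi(\xi,\eta)}\,\wh h\bigl(-(2^{-(d-1)j}\xi+\eta)\bigr)d\xi\,d\eta.$$
Applying Cauchy-Schwarz in one of the variables reduces the form to an $L^2$ norm of an oscillatory integral in which the phase $2^m\Psi$ drives additional decay. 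The nondegenerate Hessian $\det\partial^2\Psi\neq 0$ allows a further van-der-Corput or stationary-phase step in the remaining double integral, producing a kernel bounded by a power of $2^{(d-1)j-m}$. Interpolating the resulting refined bound with the trivial $L^1$ benchmark delivers the exponent $\tfrac18$ in the factor $2^{((d-1)j-m)/8}$; the restriction $|j|\leq m/(d-1)$ guarantees that the stationary-phase gain $2^{-m/2}$ always dominates the scaling loss $2^{(d-1)j/2}$.

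The principal obstacle is carrying out this $TT^*$ step quantitatively in the absence of spatial compactness of $f$ and $g$: one must run a careful multi-scale decomposition tracking both the Fourier scale of $\wh{R_\Phi f},\wh{R_\Phi g}$ and the physical scale $2^m$ dictated by $2^m t^d$, and combine orthogonality between spatial translates with the nondegenerate Hessian of $\Psi$---the analogue of the non-degenerate condition \eqref{nondegcond}. The full trilinear oscillatory analysis that produces the exponent $\tfrac18$ and matches the scaling carefully is precisely the content of Sections \ref{SphaseTri}--\ref{proofprop2neg}.
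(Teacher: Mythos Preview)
Your outline has the right ingredients---stationary phase on the symbol and a $TT^*$ step exploiting the nondegenerate Hessian of $\Psi$---but it misses the device that makes the $L^2\to L^1$ passage work, and the route you sketch (dualizing against $h\in L^\infty$) does not close.

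The concrete gap is this: after dualizing, your trilinear form contains $\wh h(-(2^{-(d-1)j}\xi+\eta))$ with $\|h\|_\infty=1$, and $\wh h$ is only a tempered distribution; the Cauchy--Schwarz and van der Corput steps you propose in $(\xi,\eta)$ cannot be applied to that object. The paper avoids this entirely by first inserting a smooth spatial partition $\sum_n\Id^*_{(d-1)j+m,n}(x)=1$ (Lemma~\ref{restrictx1}) and working with the localized operators $\mathcal B_{j,m,n}$. On each piece the output is essentially supported on an interval of length $\sim 2^{(d-1)j+m}$, so one dualizes against $f_3\in L^2$ (not $L^\infty$), and the Cauchy--Schwarz~/~H\"ormander nondegenerate-phase argument in Proposition~\ref{triosc*222} goes through cleanly to give
\[
\big|\Lambda_{j,m,n}(f_1,f_2,f_3)\big|\lesssim 2^{-\frac{(d-1)j+m}{2}}\,2^{-\frac{m-(d-1)j}{6}}\prod_{i}\|f_i\|_2.
\]
Hölder over the support converts this to $\|\mathcal B_{j,m,n}\|_{L^2\times L^2\to L^1}\lesssim 2^{-(m-(d-1)j)/6}$, and Lemma~\ref{restrictx1} then reassembles the pieces at the cost of raising $C_{j,m}$ to the power $1-\varepsilon$; choosing $\varepsilon$ suitably is what turns the exponent $\tfrac16$ into $\tfrac18$. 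So the $\tfrac18$ is not from interpolation with your trivial $L^1$ benchmark, but from the localization--delocalization loss.

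Finally, note that only Section~\ref{SphaseTri} is needed here; Sections~\ref{unif}--\ref{proofprop2neg} (uniformity and ``quadratic'' Fourier analysis) are for Theorem~\ref{thmBFR1j}, the regime $|j|>m/(d-1)$ where $TT^*$ genuinely fails.
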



\begin{theorem}\label{thmBFR1j}
Let $d\geq 2$ and ${\mathcal B}_{j, m}$ be defined as in (\ref{defofbfr})
and (\ref{defofbfrjneg}).
If $|j|\geq  m/(d-1)$,
then there exists a positive number $\e_0$ and
a constant $C$ independent of $j, m$ such that
\begin{equation}\label{l221estBFR1j}
\big\|{\mathcal B}_{j, m} (f, g)\big\|_1\leq
 C\max\left\{ 2^{\frac{m-(d-1)|j|}{3}}, 2^{-\e_0 m}\right\} \|f\|_{2}\|g\|_{2}\,,
\end{equation}
holds for all $f, g\in L^2$.
\end{theorem}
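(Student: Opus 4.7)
The plan is to split the target bound into two separate estimates — one responsible for each term inside the $\max$ — and to exploit the oscillatory decay of $\mathfrak m_d$ together with a spatial decomposition adapted to the monomial curve.

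\medskip

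I begin by passing to the frequency side. Using Fourier inversion and the definition of $R_\Phi$,
\begin{equation*}
\mathcal B_{j,m}(f,g)(x)=2^{-(d-1)|j|/2}\iint \wh f(\xi)\wh g(\eta)\wh\Phi(\xi)\wh\Phi(\eta)\,\mathfrak m_d(2^m\xi,2^m\eta)\,e^{2\pi i\mu_j(\xi,\eta)x}\,d\xi\,d\eta,
\end{equation*}
where $\mu_j(\xi,\eta)=2^{-(d-1)j}\xi+\eta$ when $j\geq 0$ and $\mu_j(\xi,\eta)=\xi+2^{(d-1)j}\eta$ when $j<0$. On the support of $\wh\Phi(\xi)\wh\Phi(\eta)$ I have $|\xi|,|\eta|\sim 1$, and the second $t$-derivative of the phase of $\mathfrak m_d(2^m\xi,2^m\eta)$ has magnitude $2^m d(d-1)|\eta|t^{d-2}\gtrsim 2^m$. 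Van der Corput then produces the pointwise bound $|\mathfrak m_d(2^m\xi,2^m\eta)|\leq C\,2^{-m/2}$, and combining this with Plancherel yields the key $L^2$ estimate
\begin{equation*}
\|\mathcal B_{j,m}(f,g)\|_2\leq C\,2^{-(d-1)|j|/2-m/2}\|f\|_2\|g\|_2.
\end{equation*}

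\medskip

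Next I upgrade this to an $L^1$ bound by a spatial decomposition. I write $f=\sum_I f_I$ and $g=\sum_J g_J$ with $f_I,g_J$ supported in intervals $I,J$ of a common length $S$ to be optimized, and I estimate $\mathcal B_{j,m}(f_I,g_J)$ separately for each pair. For fixed $(I,J)$ the output is essentially localized to an interval whose length is controlled by $S$ and by the scales $2^m,2^{(d-1)|j|}$; on that effective support I apply Cauchy--Schwarz against the local $L^2$ estimate to convert it to a local $L^1$ bound. The Schwartz decay of $\Phi$ renders the outputs almost orthogonal as $(I,J)$ varies, and summation produces an overall $L^1$ bound. Optimizing $S$ delivers the factor $2^{(m-(d-1)|j|)/3}$; the $1/3$ arises from balancing the local $L^1$ gain, which scales as $(\text{effective support})^{1/2}$, against the combinatorial count of interacting pairs $(I,J)$. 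In parallel, the trivial bound $\|\mathcal B_{j,m}(f,g)\|_\infty\leq C\,2^{-(d-1)|j|/2}\|f\|_2\|g\|_2$ (from $\|R_\Phi f\|_\infty\leq C\|f\|_2$) combined with the $L^2$ estimate yields directly $\|\mathcal B_{j,m}(f,g)\|_1\leq C\,2^{-\e_0 m}\|f\|_2\|g\|_2$ for a suitable $\e_0>0$, which dominates when $(d-1)|j|\gg m$. Taking the minimum of the two bounds produces the $\max$ in the statement.

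\medskip

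The principal obstacle is the almost orthogonality used in the spatial decomposition. The nonlinear change of variable $t\mapsto t^d$ couples the two arguments $2^{-(d-1)j}x-2^m t$ and $x-2^m t^d$, so it is not immediate that only $O(1)$ pairs $(I,J)$ contribute to each output interval. Establishing this requires a careful stationary-phase analysis of the bilinear kernel that uses the curvature of the monomial curve and tracks the Jacobian $1-d\,2^{-(d-1)j}t^{d-1}$ (which is close to $1$ throughout the regime $|j|\geq m/(d-1)$). Carrying out this delicate analysis cleanly — and verifying that the orthogonality losses are absorbed by the already-obtained $2^{-m/2}$ decay — is presumably the content of Sections \ref{SphaseTri}--\ref{proofprop2neg}.
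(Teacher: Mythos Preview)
Your approach has a genuine gap: the $L^2$ estimate plus spatial localization cannot produce the uniform-in-$j$ decay $2^{-\e_0 m}$ in the $L^1$ norm, and your claimed shortcut (``the trivial $L^\infty$ bound combined with the $L^2$ estimate yields directly $\|\mathcal B_{j,m}(f,g)\|_1\le C2^{-\e_0 m}$'') is false --- there is no such implication between $L^2$, $L^\infty$ and $L^1$ bounds.  If you carry out your spatial decomposition carefully you will find that the combinatorial count of interacting pairs $(I,J)$ grows like $2^{(d-1)|j|}$, and this loss exactly cancels the gain $2^{-(d-1)|j|/2-m/2}$ from the $L^2$ bound together with the factor $(\text{effective support})^{1/2}$, leaving you with the trivial bound $\|\mathcal B_{j,m}(f,g)\|_1\le C\|f\|_2\|g\|_2$ and no decay in $m$ whatsoever.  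The paper makes this obstruction explicit: right after Proposition~\ref{triosc*222} it constructs a counterexample showing that the trilinear $L^2$ estimate with extra decay in $m$ \emph{fails} for all $|j|>m/(d-1)$, and concludes that ``the $TT^*$ method cannot work for the case $|j|>m/(d-1)$.''

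What the paper actually does is qualitatively different.  After a smooth spatial truncation (Lemma~\ref{restrictx1}) it passes to a trilinear form $\Lambda_{j,m,n}(f_1,f_2,f_3)$ and runs a Gowers-type \emph{uniformity} argument (Theorem~\ref{uniformitythm}): it fixes a class $\mathcal Q$ of ``quadratic'' phases $a\xi^{d/(d-1)}+b\xi$ and proves two separate estimates --- one when $f_1$ is $\sigma$-uniform in $\mathcal Q$ (Proposition~\ref{prop1}/Corollary~\ref{cor1}), and one when $f_1=e^{iq}$ for $q\in\mathcal Q$ (Proposition~\ref{prop2}, which in turn rests on a two-dimensional van der Corput lemma).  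The uniformity theorem interpolates between these, and optimizing $\sigma$ gives the $2^{-\e_0 m}$ decay.  This ``quadratic Fourier analysis'' is the new idea you are missing; it is not a refinement of the $L^2$/localization scheme but a genuinely different mechanism that extracts cancellation invisible to $TT^*$.
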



The positive number $\e_0$ in Theorem \ref{thmBFR1j} 
can be chosen to be $1/(8d)$.
Theorem \ref{thmBFR1m} 
 can be proved by a $TT^*$ method. 
However, the $TT^*$ method fails when $|j|>m/(d-1)$. To obtain Theorem \ref{thmBFR1j},
we will employ a method related to the uniformity of functions.  \\

Now we can see that Theorem \ref{thmofTm} is a consequence of Theorem \ref{thmBFR1m} and Theorem \ref{thmBFR1j}.

\subsection{Proof of Theorem \ref{thmofTm}}\label{proofofthmTm}
Define a bilinear operator $T_{j,m}$ to be 
\begin{equation}\label{defofTjm}
T_{j,m}(f,g)(x)= \iint {\wh f}(\xi){\wh g}(\eta)e^{2\pi
i(\xi+\eta)x}\wh\Phi\big(\frac{2^{-j}\xi}{2^m}\big)
\wh\Phi\big(\frac{2^{-d j}\eta}{2^{m}}\big)\mathfrak{m}_j(\xi,
\eta)d\xi d\eta\,.
\end{equation}
Let $\gamma_{j,m}$ be defined by
\begin{equation}\label{defofgamajm}
\gamma_{j,m}=\left\{ \begin{array}{ll}
  2^{\frac{(d-1)|j|-m}{8}} & {\rm if}\,\, |j|\leq\frac{m}{d-1}  \\
  \max\left\{ 2^{\frac{m-(d-1)|j|}{3}}, 2^{-\e_0 m} \right\}  & {\rm if}\,\, |j|\geq\frac{m}{d-1}
\end{array}
\right.
\end{equation}
A rescaling argument, Theorem  \ref{thmBFR1m} and Theorem \ref{thmBFR1j}
yield 
\begin{equation}\label{estTjm221}
 \left\| T_{j,m}(f,g) \right\|_1\leq C\gamma_{j,m}\|f\|_2\|g\|_2\,.
\end{equation}
Since $\sum_m T_m=\sum_{m}\sum_{j:|j|\geq L}T_{j,m}$, we obtain 
\begin{equation}\label{estTm221}
 \left\| \sum_{m\geq 1}T_{m}(f,g) \right\|_1
 \leq C\sum_{m\geq 1}\sum_{j:|j|\geq L} \gamma_{j,m}\left\|f_{j,m}\right\|_2
 \left\|g_{j,m}\right\|_2\,,
\end{equation}
where 
$$
 \wh{f_{j,m}}(\xi) =\wh f(\xi)\wh{\Phi}\left(\frac{\xi}{2^{j+m}}\right)\,,
$$
$$
 \wh{g_{j,m}}(\eta) =\wh{g}(\eta) \wh\Phi\left(\frac{\eta}{2^{dj+m}}\right)\,.
$$
Clearly the right hand side of (\ref{estTm221}) is bounded by
$C\|f\|_2\|g\|_2$. Therefore we finish the proof of Theorem {\ref{thmofTm}}.\\

We now start to make some reductions first for proving 
Theorem \ref{thmBFR1m} and Theorem \ref{thmBFR1j}.

\subsection{Smooth Truncations}

Let $\phi$ be a nonnegative Schwartz function such that $\wh\phi$ is
supported in $[-1/100, 1/100]$ and satisfies $\wh\phi(0)=1$. For any
integer $k$, let $\phi_k(x)=2^{-k}\phi(2^{-k}x)$. And for
$n\in\mathbb Z$, let
$$
 I_{k,n} = [2^{k}n, 2^{k+1}n]\,.
$$
Denote the characteristic function of the set $I$ by $ \Id_I$. We
define
$$
\Id^*_{k,n}(x)=\Id_{I_{k,n}}*\phi_k(x)\,,
$$
and
$$
\Id^{**}_{k,n}(x)= \int_{I_{k,n}}  \frac{2^{-k}}{\big(1+ 2^{-k}|x -y|\big)^{200}} dy\,.
$$
 Here $\Id^*_{k,n}$ and $\Id^{**}_{k,n} $  can be considered as essentially
$\Id_{I_{k,n}}$. Clearly we have
\begin{equation}\label{sum1kn}
\sum_n \Id^*_{k,n}(x)=1
\end{equation}

\begin{lemma}\label{restrictx1}
Let $\Phi_1$ be a Schwartz function such that $\wh\Phi_1(\xi)\equiv 1$
if $3/8\leq |\xi|\leq 17/8$ and $\wh\Phi_1$ is supported on $1/8\leq |\xi|\leq 19/8$.
And let $n \in \mathbb Z$ and define ${\mathcal {B}}_{j, m, n}$ by if $j>0$, then
\begin{equation}\label{defofBjmn}
{\mathcal {B}}_{j, m, n}(f, g)(x) =2^{-(d-1)j/2}\!\!
\int_{\mathbb R} \!R_{\Phi_1} f(2^{-(d-1)j} x - 2^m t) R_{\Phi_1} g(x - 2^{m} t^d ) \rho(t) dt
 \Id^*_{(d-1)j+m, n}(x)
\,,
\end{equation}
if $j\leq 0$, then 
\begin{equation}\label{defofBjmnneg}
{\mathcal {B}}_{j, m, n}(f, g)(x) =2^{(d-1)j/2}\!\!
\int_{\mathbb R} \!R_{\Phi_1} f( x - 2^m t) R_{\Phi_1} g(2^{(d-1)j}x - 2^{m} t^d ) \rho(t) dt
 \Id^*_{(d-1)|j|+m, n}(x)
\,,
\end{equation}

If there is a constant $C_{j, m}$ independent of $n$ such that
\begin{equation}\label{estofBjmn0} 
\|{\mathcal {B}}_{j, m, n}(f, g)\|_1\leq C_{j,m}\|f\|_2\|g\|_2
\end{equation}
 holds for all $f, g\in L^2$, then for any positive number $\e$, 
\begin{equation}\label{estofBjm0}
\|{\mathcal {B}}_{j, m}(f, g)\|_1\leq C_\e C_{j,m}^{1-\e} \|f\|_2\|g\|_2\,,
\end{equation} 
where $C_\e$ is a constant depending on $\e$ only.
\end{lemma}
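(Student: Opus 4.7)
The plan is to exploit the exact identity
\[
\mathcal{B}_{j,m}(f,g) = \sum_n \mathcal{B}_{j,m,n}(R_\Phi f, R_\Phi g),
\]
decompose $R_\Phi f$ and $R_\Phi g$ spatially at the natural scales $2^m$ and $2^k$ (with $k=(d-1)|j|+m$), and then combine the hypothesis with a trivial $L^2\times L^2\to L^1$ bound and the Schwartz decay of $\Phi_1$. The identity follows because $\wh\Phi_1\equiv 1$ on $\operatorname{supp}\wh\Phi$ gives $R_{\Phi_1}(R_\Phi h)=R_\Phi h$, which plugged into the definition of $\mathcal{B}_{j,m,n}$ yields $\mathcal{B}_{j,m,n}(R_\Phi f,R_\Phi g)(x)=\mathcal{B}_{j,m}(f,g)(x)\,\Id^*_{k,n}(x)$; summing against $\sum_n \Id^*_{k,n}\equiv 1$ gives the displayed identity.

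Next I would insert the partitions of unity $R_\Phi f=\sum_{n_1}(R_\Phi f)\Id^*_{m,n_1}$ and $R_\Phi g=\sum_{n_2}(R_\Phi g)\Id^*_{k,n_2}$. These scales are chosen so that, for $x\in I_{k,n}$, the argument $2^{-(d-1)|j|}x-2^m t$ in the integrand of $\mathcal{B}_{j,m,n}$ lies essentially in $I_{m,n}$ while $x-2^m t^d$ lies essentially in $I_{k,n}$. For the diagonal indices $n_1=n_2=n$ the hypothesis combined with Cauchy--Schwarz in $n$ and the near-orthogonality $\sum_n|\Id^*_{m,n}|^2\lesssim 1$, $\sum_n|\Id^*_{k,n}|^2\lesssim 1$ yields
\[
\sum_n\bigl\|\mathcal{B}_{j,m,n}\bigl((R_\Phi f)\Id^*_{m,n},(R_\Phi g)\Id^*_{k,n}\bigr)\bigr\|_1 \le C_{j,m}\Bigl(\sum_n\|(R_\Phi f)\Id^*_{m,n}\|_2^2\Bigr)^{1/2}\Bigl(\sum_n\|(R_\Phi g)\Id^*_{k,n}\|_2^2\Bigr)^{1/2}\lesssim C_{j,m}\|f\|_2\|g\|_2,
\]
which is the full-strength diagonal contribution.

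For off-diagonal indices I would exploit the rapid decay of $\Phi_1$: when $n_1\ne n$, the convolution $R_{\Phi_1}\bigl((R_\Phi f)\Id^*_{m,n_1}\bigr)$ evaluated on the window near $I_{m,n}$ satisfies a pointwise Schwartz estimate carrying a gain $(1+2^m|n-n_1|)^{-N}$ for any $N$, and analogously on the $g$-side with $(1+2^k|n-n_2|)^{-N}$. Combining these gains with the trivial bound $\|\mathcal{B}_{j,m,n}(F,G)\|_1\lesssim\|F\|_2\|G\|_2$ (obtained by Cauchy--Schwarz directly in the defining integral, with constant independent of $j,m,n$) and summing in $n_1,n_2,n$ gives an acceptable contribution bounded by $C_N\|f\|_2\|g\|_2$. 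Interpolating the full-strength diagonal bound $C_{j,m}$ against the off-diagonal bound of order~$1$ via $C_{j,m}^{1-\varepsilon}\cdot 1^{\varepsilon}$ then produces the claimed estimate $\|\mathcal{B}_{j,m}(f,g)\|_1\le C_\varepsilon C_{j,m}^{1-\varepsilon}\|f\|_2\|g\|_2$ for any $\varepsilon>0$; the $\varepsilon$-loss is the price of balancing the two bounds so that the Schwartz tails sum absolutely. The main obstacle is the careful bookkeeping of the two disparate scales $2^m$ and $2^k$ in the off-diagonal Schwartz estimates: one must verify that the pointwise decay of $\Phi_1$, the weights coming from the change of variables $x \mapsto 2^{-(d-1)|j|}x$, and the $\ell^2$-summability of the localized $L^2$-pieces all combine to give a convergent sum controlled by the global $L^2$ norms of $f$ and $g$.
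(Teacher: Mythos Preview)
Your approach is structurally the same as the paper's: insert spatial partitions of unity at scales $2^m$ and $2^{(d-1)|j|+m}$, use the Fourier-support fact to invoke the hypothesis on the localized pieces, and handle the off-diagonal contributions separately.

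However, your off-diagonal analysis contains a genuine error. You claim that $R_{\Phi_1}\bigl((R_\Phi f)\Id^*_{m,n_1}\bigr)$ evaluated near $I_{m,n}$ gains a factor $(1+2^m|n-n_1|)^{-N}$ from the Schwartz decay of $\Phi_1$. But this is false: the Fourier support of $(R_\Phi f)\Id^*_{m,n_1}$ lies inside the region where $\wh\Phi_1\equiv 1$ (indeed, this is precisely the fact you need in order to apply the hypothesis to the localized pieces), so $R_{\Phi_1}$ acts as the \emph{identity} on it and there is no gain whatsoever from $\Phi_1$. The only available decay comes from the tail of $\Id^*_{m,n_1}$ itself, which lives at scale $2^m$ and therefore yields only $(1+|n-n_1|)^{-N}$, not $(1+2^m|n-n_1|)^{-N}$.

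With only this weaker decay, your ``interpolation'' step does not make sense as written: summing the hypothesis bound $C_{j,m}$ over all $(n_1-n,n_2-n)\in\mathbb Z^2$ diverges, while the trivial bound plus the tail $(1+|k|)^{-N}$ gives only an $O(1)$ contribution, not something small compared to $C_{j,m}$. The paper resolves this by splitting at a threshold $\max\{|n_1-n|,|n_2-n|\}\gtrless C_{j,m}^{-\varepsilon/2}$: below threshold one applies the hypothesis to each of the $\sim C_{j,m}^{-\varepsilon}$ near-diagonal terms and sums in $n$ by Cauchy--Schwarz, producing $C_{j,m}^{1-\varepsilon}$; above threshold the tail $(1+|k|)^{-N}$ sums to $O\bigl(C_{j,m}^{(N-1)\varepsilon/2}\bigr)\le C_{j,m}$ for $N$ large. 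That threshold split is the missing mechanism in your sketch, and it is exactly where the $\varepsilon$-loss in the conclusion originates.
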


\begin{proof}
Without loss of generality, assume that $C_{j,m}\leq 1$. And we only prove the case
$j>0$. The case $j\leq 0$ can be proved similarly and we omit the proof for this case.
From (\ref{sum1kn}), we can express $\langle{\mathcal {B}}_{j, m}(f, g), h\rangle$ as
$$
\sum_{k_1}\sum_{k_2}\sum_n \Lambda_{k_1, k_2, n, j, m}(f,g,h)\,.
$$
Here $\Lambda_{k_1, k_2, n, j, m}(f,g,h)$ equals to 
\begin{equation}
2^{-\frac{(d-1)j}{2}}
\int\!\!\int \! F_{1, j, m, n, k_1}(2^{-(d-1)j} x - 2^m t)
 F_{2, j, m, n, k_2}(x - 2^{m} t^d ) 
 F_{3, j, m, n}(x) dxdt\,, 
\end{equation}
where
$$F_{1,j, m,n, k_1}=\Id^*_{m, n+k_1} R_\Phi f\,, $$
$$F_{2, j, m, n, k_2}= \Id_{(d-1)j+m, n+k_2}^* R_\Phi g\,, $$ 
$$F_{3, j, m, n}=\Id^*_{(d-1)j+m, n}h\,. $$
Putting the absolute value throughout and utilizing the fast decay of $\Id^*_{k,n}$,
we estimate the sum of $\Lambda_{k_1, k_2, n, j, m}(f,g,h)$ for all $(k_1, k_2, n)$'s 
with $\max\{|k_1|, |k_2|\}\geq C_{j,m}^{-\e/2} $ by 
$$
2^{-\frac{(d-1)j}{2}}\!\!\!\!\!\!\!\!\sum_{\substack{(k_1, k_2, n)\\ \max\{|k_1|, |k_2|\}\geq C_{j,m}^{-\e/2} }}
\!\!\!\!\!\!\!\!
 \int\!\!\int \frac{ \big| R_\Phi f (2^{-(d-1)j} x - 2^m t)\big|  \big| R_\Phi g(x - 2^{m} t^d )\big| |h(x)||\rho(t)|}{ \big(1+ |t+k_1|\big)^N 
\big(1+| 2^{-(d-1)j} t^d +k_2 | \big)^N} dxdt\,,
$$
for all positive integers $N$. Since $|t|\sim 1$ when $t$ is in the support of $\rho$, we 
dominate this sum by 
\begin{equation}\label{k1k2large}
C_\e C_{j,m} \|f\|_2\|g\|_2\|h\|_\infty\,.
\end{equation}
We now turn to sum $ \Lambda_{k_1, k_2, n, j, m}(f,g,h) $ for all $|k_1|< C_{j,m}^{-\e/2}$ and $|k_2|< C^{-\e/2}_{j,m}$.  
Observe that when $j, m$ are large, Fourier transforms of $F_{1,j,m, n, k_1}$ and 
$F_{2,j, m, n, k_2}$ are supported in  $3/8\leq |\xi|\leq 17/8$. Thus we have 
$$
 \Lambda_{k_1, k_2, n, j, m}(f,g,h)= \langle {\mathcal B}_{j,m, n}(F_{1,j, m, n, k_1}, 
 F_{2,j, m, n, k_2}), h \rangle\,.
$$
And then (\ref{estofBjmn0}) gives
 $$
\sum_{\substack{(k_1, k_2, n)\\ \max\{|k_1|, |k_2|\} < C_{j,m}^{-\e/2} }} 
\!\!\!\!\!\!\!\big|\Lambda_{k_1, k_2, n, j, m}(f,g,h) 
\big| \leq C_{j,m}\!\!\!\!\!\!\!\sum_{\substack{(k_1, k_2, n)\\ \max\{|k_1|, |k_2|\} < C_{j,m}^{-\e/2} }}
\!\!\!\!\!\!\!
 \| F_{1,j, m, n, k_1}\|_2\|F_{2,j, m, n, k_2} \|_2\|h\|_\infty\,,
$$
which is clearly bounded by
\begin{equation}\label{k1k2small}
 C_{j, m}^{1-\e} \|f\|_2\|g\|_2\|h\|_\infty\,.
\end{equation}
Combining (\ref{k1k2large}) and (\ref{k1k2small}), we complete the proof.
\end{proof}

\subsection{Trilinear Forms}
Let $f_1, f_2, f_3$ be measurable functions supported on $1/16\leq |\xi|\leq 39/16$.
Define a trilinear form $\Lambda_{j,m, n}(f_1, f_2, f_3)$ by
\begin{equation}\label{defofLajmn00}
\Lambda_{j,m, n}(f_1, f_2, f_3):= \left\langle 
 {\mathcal B}_{j,m,n}(\check {f_1}, \check {f_2} ), \check {f_3} 
\right\rangle\,.
\end{equation}
By Lemma {\ref{restrictx1}}, Theorem \ref{thmBFR1m} and Theorem \ref{thmBFR1j} can be reduced to 
the following theorems respectively.

\begin{theorem}\label{thmtriest1m}
Let $d\geq 2$ and $\Lambda_{j,m, n}(f_1, f_2, f_3)$ be defined as in (\ref{defofLajmn00}).
If $ |j| \leq m/(d-1)$,
then there exists 
a constant $C$ independent of $j, m$ such that
\begin{equation}\label{l221triest1m}
\left |  \Lambda_{j,m, n}(f_1, f_2, f_3)     \right|\leq
 C 2^{\frac{-(d-1)|j|-m}{2}}2^{-\frac{m-(d-1)|j|}{6}} \|f_1\|_{2}\|f_2\|_{2} \|{f_3} \|_2\,,
\end{equation}
holds for all $f_1, f_2, f_3 \in L^2$.
\end{theorem}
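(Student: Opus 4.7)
The bound in Theorem \ref{thmtriest1m} is strictly stronger than what follows from combining the bilinear estimate of Theorem \ref{thmBFR1m} with Bernstein's inequality on $\check f_3$; it amounts to an $L^2\times L^2 \to L^2$ estimate for $\mathcal{B}_{j,m,n}$ with operator norm bounded by $C 2^{-(d-1)|j|/3 - 2m/3}$. My plan is to prove this via the $TT^*$ strategy indicated by the paper for Theorem \ref{thmBFR1m}. Fixing one factor, say $g = \check f_2$, consider the linear operator $T_g(f) = \mathcal{B}_{j,m,n}(f, g)$; by duality one has $\|T_g\|_{L^2 \to L^2}^2 = \|T_g^* T_g\|_{L^2 \to L^2}$, so it suffices to control the operator norm of $T_g^* T_g$, equivalently to control the $L^2$-norm of the kernel $K_{f_2, f_3}$ obtained by writing $\Lambda_{j,m,n}(f_1,f_2,f_3) = \langle f_1, \overline{K_{f_2,f_3}}\rangle$.

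Expanding $\|K_{f_2,f_3}\|_2^2$ via $T^*T$ yields an integral over two copies $(t_1, t_2)$ of the $t$-variable, which after the change of variables $w = t_2 - t_1$ and the rescaling $(x_1, x_2) \mapsto (x_1, w)$ imposed by the $\Id^*_{(d-1)|j|+m, n}$-localization becomes a triple integral of products of $F_2, F_3$ evaluated at shifted arguments. Passing to Fourier variables and using the stationary-phase description
\begin{equation*}
M(\xi,\eta) = \int \rho(t) e^{-2\pi i 2^m(\xi t + \eta t^d)}\, dt \sim 2^{-m/2} A(\xi,\eta)\, e^{i\psi(\xi,\eta)},
\end{equation*}
this becomes a bilinear oscillatory integral whose phase is $\psi(\xi_1,\eta) - \psi(\xi_1 - 2^{(d-1)|j|}\sigma, \eta + \sigma)$, with $|\sigma|\leq C 2^{-(d-1)|j|}$ enforced by the frequency supports.

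The improvement over the trivial $L^2\times L^2 \to L^2$ bound $C 2^{-((d-1)|j|+m)/2}$ comes from the curvature of $\psi$: although $\psi$ is $1$-homogeneous in $(\xi,\eta)$ (so its Hessian degenerates), the quantity $2^{(d-1)|j|}\partial_\xi\psi - \partial_\eta\psi$ has magnitude of order $2^{(d-1)|j|+m}$, and a bilinear Fourier restriction argument (the analogue of the classical estimates for curves of nonvanishing curvature, carried out in Sections \ref{SphaseTri}--\ref{proofprop2neg}) produces an additional gain of $2^{-(m-(d-1)|j|)/3}$ in $\|K_{f_2,f_3}\|_2^2$. Taking square roots and applying Cauchy--Schwarz to pair $K_{f_2,f_3}$ against $f_1$ yields the stated bound.

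The main obstacle is executing the oscillatory-integral analysis against the rough $L^2$ functions $\hat F_2, \hat F_3$: one cannot integrate by parts directly, and instead must use Plancherel together with the non-degeneracy of $\psi$ to extract decay. Additional technical points are handling $t$-stationary points that may fall near the boundary of $\mathrm{supp}\, \rho$ and maintaining uniformity in the translation parameter $n$ of $\Id^*$, the latter being necessary because Lemma \ref{restrictx1} requires an $n$-independent bound to deduce the global Theorem \ref{thmBFR1m}.
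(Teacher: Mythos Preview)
Your $TT^*$ strategy is the right one and is what the paper does, but the sketch misidentifies the mechanism that produces the gain, and that is a genuine gap. The quantity you single out, $2^{(d-1)|j|}\partial_\xi\psi - \partial_\eta\psi$, is a first-order directional derivative of the phase; lower bounds on first derivatives do not by themselves give $L^2\times L^2$ oscillatory-integral decay against rough functions, and ``a bilinear Fourier restriction argument \ldots\ carried out in Sections \ref{SphaseTri}--\ref{proofprop2neg}'' is a circular reference rather than an argument.

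The paper's proof proceeds as follows. Work in frequency from the start: after stationary phase the trilinear form becomes $\iint f_1(\xi)f_2(\eta)f_3(2^{-(d-1)j}\xi+\eta)e^{i2^m\phi_{d,m}(\xi,\eta)}\,d\xi\,d\eta$ with $\phi_{d,m}(\xi,\eta)=c_d\xi^{d/(d-1)}\eta^{-1/(d-1)}$. A linear change of variables makes $f_3$ a function of a single variable, so Cauchy--Schwarz removes $f_3$ (not $f_1$) and leaves $\|\mathbf T_{d,j,m}(f_1,f_2)\|_2$. Squaring and substituting $\eta_2=\eta_1+\tau$, then passing to coordinates $(u,v)=(\xi-\eta,\,b_1\xi+b_2\eta)$ with $b_2=2^{-(d-1)j}$, reduces the inner integral to
\[
\iint F_\tau(u)G_\tau(v)\,e^{\,i2^m\tilde{\mathbf Q}_\tau(u,v)}\,du\,dv,\qquad \tilde{\mathbf Q}_\tau(u,v)=\phi_{d,m}(u,v)-\phi_{d,m}(u-\tau,v+b_2\tau).
\]
The decisive point is the lower bound on the \emph{mixed second derivative} $|\partial_u\partial_v\tilde{\mathbf Q}_\tau|\geq C|\tau|$; H\"ormander's classical non-degenerate phase estimate then gives the bound $\min\{1,(2^m|\tau|)^{-1/2}\}\|F_\tau\|_2\|G_\tau\|_2$. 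Splitting $|\tau|\lessgtr\tau_0$, using $\int\|G_\tau\|_2^2\,d\tau\leq Cb_2^{-1}\|f_2\|_2^2$, and optimizing in $\tau_0$ yields exactly the exponent $-\tfrac{m-(d-1)|j|}{6}$. This mixed-Hessian/H\"ormander step is what your outline is missing; the directional first-derivative bound you state would not suffice to run it.
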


\begin{theorem}\label{thmtriest1}
Let $d\geq 2$ and $\Lambda_{j,m, n}(f_1, f_2, f_3)$ be defined as in (\ref{defofLajmn00}).
If $|j|\geq  m/(d-1)$,
then there exist a positive number $\e_0$ and
a constant $C$ independent of $j, m$ such that
\begin{equation}\label{l221triest1}
\left |  \Lambda_{j,m, n}(f_1, f_2, f_3)     \right|\leq
 C\max\left\{ 2^{\frac{-(d-1)|j|+m}{2}}, 2^{-\e_0 m}\right\} \|f_1\|_{2}\|f_2\|_{2} \|\wh {f_3} \|_\infty\,,
\end{equation}
holds for all $f_1, f_2 \in L^2$ and $\wh {f_3}\in L^\infty$ such that 
$f_1, f_2, f_3$ are supported on $1/16\leq |\xi|\leq 39/16$.
\end{theorem}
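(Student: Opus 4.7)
The plan is to reduce $\Lambda_{j,m,n}$ to an oscillatory trilinear integral via stationary phase, then bound it in two regimes according to the size of $(d-1)|j|-m$; I take $j\geq 0$, the case $j<0$ being symmetric with the roles of $f_1$ and $f_2$ swapped. Expanding each $\check f_i$ by Fourier inversion and carrying out the $x$-integration against the kernel $\Id^{*}_{(d-1)j+m,n}$, Plancherel gives
\[
\Lambda_{j,m,n}=2^{-(d-1)j/2}\iiint f_1(\xi)f_2(\eta)\overline{f_3(\zeta)}\,\mathfrak{m}_d(2^m\xi,2^m\eta)\,\widehat{\Id^{*}_{(d-1)j+m,n}}\bigl(\zeta-2^{-(d-1)j}\xi-\eta\bigr)d\xi\,d\eta\,d\zeta.
\]
On the support of the integrand $|\xi|,|\eta|\sim 1$, so the phase $t\mapsto 2^m(\xi t+\eta t^d)$ admits a non-degenerate critical point $t_c(\xi,\eta)\in\operatorname{supp}\rho$ and a stationary-phase expansion gives $\mathfrak{m}_d(2^m\xi,2^m\eta)=2^{-m/2}\,a(\xi,\eta)\,e^{-2\pi i\,2^m\Psi(\xi,\eta)}+O(2^{-3m/2})$ with $a$ a smooth zero-order symbol and $\Psi(\xi,\eta)=c_d\,\xi^{d/(d-1)}\eta^{-1/(d-1)}$ a degree-one homogeneous phase. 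The factor $\widehat{\Id^{*}_{(d-1)j+m,n}}$ is a Schwartz bump at frequency scale $2^{-(d-1)j-m}$ with unit $L^1$-mass, essentially pinning $\zeta=2^{-(d-1)j}\xi+\eta$.

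\emph{Regime (a): $(d-1)j\gg m$.} Plancherel in $\zeta$ transfers the integral to an oscillatory integral in $(\xi,\eta,y)$ with $y$ in the physical support of $\Id^{*}$, and the pointwise bound $|\check f_3|\leq \|\hat f_3\|_\infty$ can be applied directly. Combining the size bound $|\mathfrak{m}_d|\lesssim 2^{-m/2}$, the localization of $\Id^{*}$, Cauchy--Schwarz on the bounded $(\xi,\eta)$-annulus, and the prefactor $2^{-(d-1)j/2}$ produces the first entry in the maximum,
\[
|\Lambda_{j,m,n}|\leq C\,2^{(m-(d-1)j)/2}\,\|f_1\|_2\|f_2\|_2\|\hat f_3\|_\infty,
\]
which is genuinely small precisely when $(d-1)j-m$ is large.

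\emph{Regime (b): $(d-1)j\approx m$.} Here the bound of (a) is $\approx 1$ and the gain $2^{-\varepsilon_0 m}$ must be extracted from the oscillatory factor $e^{-2\pi i\,2^m\Psi}$. The fundamental obstacle is that the degree-one homogeneity of $\Psi$ forces $\det(\operatorname{Hess}\Psi)\equiv 0$: by Euler's identity the vector $(\xi,\eta)^{\mathrm{T}}$ lies in $\ker(\operatorname{Hess}\Psi)$, so ordinary non-degenerate stationary phase delivers no decay. This is precisely the ``failure of $L^2$ trilinear decay'' mentioned in the introduction, and is the reason the third norm must be the weak $\|\hat f_3\|_\infty$ rather than $\|f_3\|_2$. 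To bypass the degeneracy I apply a $TT^{*}$-type squaring in (say) the $\xi$-variable, which introduces a difference parameter $h$ and replaces the phase by the first-difference $\Psi(\xi,\eta)-\Psi(\xi-h,\eta)$. For generic $h$ this differenced phase has non-vanishing second $\eta$-derivative (the kernel direction of $\operatorname{Hess}\Psi$ is broken by differencing, and a direct computation yields $\partial_{\xi\eta\eta}\Psi\sim 1$ on the support), so a Van der Corput estimate with a lower bound of order $2^m|h|$ delivers pointwise decay $(2^m|h|)^{-1/2}$ on the inner $\eta$-integral. Integrating in $h$ against the shifted autocorrelation of $f_3$, controlled in $L^\infty$ by $\|\hat f_3\|_\infty^2$, and taking square roots produces the claimed $2^{-\varepsilon_0 m}$ with $\varepsilon_0=1/(8d)$. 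This is the ``quadratic uniformity'' bridge between $L^2$ and $L^\infty$ advertised in the introduction, and is the main obstacle of the proof: manufacturing genuine decay from the degenerate phase $\Psi$ by exploiting the higher-order (``cubic'') Taylor terms that survive after the differencing. Taking the maximum of the bounds from regimes (a) and (b) completes the theorem.
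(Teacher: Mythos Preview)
Your Regime~(a) argument is incorrect: putting absolute values throughout gives the bound with the \emph{wrong sign} in the exponent. After Plancherel in $\zeta$ you land back on the physical side with an integral over $y$ in $\operatorname{supp}\Id^{*}_{(d-1)j+m,n}$, whose measure is $\sim 2^{(d-1)j+m}$. Combining $|\check f_3|\le\|\wh{f_3}\|_\infty$, the size bound $|\mathfrak m_d|\lesssim 2^{-m/2}$, Cauchy--Schwarz on the bounded $(\xi,\eta)$-box, and the prefactor $2^{-(d-1)j/2}$ yields
\[
|\Lambda_{j,m,n}|\lesssim 2^{-(d-1)j/2}\cdot 2^{-m/2}\cdot 2^{(d-1)j+m}\,\|f_1\|_2\|f_2\|_2\|\wh{f_3}\|_\infty
= 2^{\frac{(d-1)j+m}{2}}\,\|f_1\|_2\|f_2\|_2\|\wh{f_3}\|_\infty,
\]
which blows up rather than decays. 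The paper obtains the first entry $2^{(m-(d-1)j)/2}$ by a genuinely different mechanism (Proposition~\ref{prop1}): on each interval of length $2^m$ in $x$ one freezes the argument of $\check f_1(2^{-(d-1)j}x-2^mt)$ at a point $\alpha$ and writes $\check f_1(2^{-(d-1)j}x-2^mt)=\check f_1(2^{-(d-1)j}\alpha-2^mt)+O\bigl(2^{-(d-1)j}|x-\alpha|\,\|\check f_1\|_\infty\bigr)$. The difference term furnishes the gain $2^{m-(d-1)j}$; interpolation with an $L^1$-bound produces the stated $2^{(m-(d-1)j)/2}$. No amount of absolute-value bookkeeping replaces this step.

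Your Regime~(b) is not a proof sketch but a wish. You correctly identify that the degenerate Hessian of $\Psi$ kills any naive $L^2\times L^2\times L^2$ decay, but your remedy---a $TT^*$ squaring ``in the $\xi$-variable''---would produce an autocorrelation of $f_1$, not of $f_3$, so it is unclear how $\|\wh{f_3}\|_\infty$ enters. More importantly, the paper shows explicitly (the counterexample after (\ref{La*jmest4221})) that $TT^*$ applied to the full trilinear form cannot produce $2^{-\varepsilon m}$ in this regime. The actual argument is structurally different: one invokes a Gowers-type uniformity dichotomy (Theorem~\ref{uniformitythm}) on $f_1$ with respect to the class $\mQ_1=\{a\xi^{d/(d-1)}+b\xi\}$. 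If $f_1$ is $\sigma$-uniform, the frozen term from the splitting above becomes a multiplier operator on $f_2$ whose symbol is bounded by $\sigma\|f_1\|_2$ (this is where uniformity is cashed in). If not, one may replace $f_1$ by a single modulation $e^{iq_1}$ and only \emph{then} run a $TT^*$/van der Corput argument (Proposition~\ref{prop2}, Lemma~\ref{vdcorput2d111}) on the resulting \emph{bilinear} form in $f_2,f_3$; the phase after inserting $e^{iq_1}$ is explicit and its mixed derivatives are genuinely nondegenerate (Lemma~\ref{mixdevarphi}). Optimizing over $\sigma$ balances the two contributions. You allude to ``quadratic uniformity'' but never implement the dichotomy, and your differencing-in-$\xi$ proposal does not create an explicitly computable phase the way substituting $f_1=e^{iq_1}$ does.
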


A proof of Theorem \ref{thmtriest1m} will be provided in Section \ref{SphaseTri} and
a proof of Theorem \ref{thmtriest1} will be given in Section \ref{triest000neg}.

\section{Stationary Phases and Trilinear Oscillatory Integrals}\label{SphaseTri}
\setcounter{equation}0

In Section \ref{paraproduct}, we see that Fourier series can help
us to reduce the problem to the paraproduct case when $|m'-m|>10^d$.
This method does not work for the case when $|m-m'|\leq 10^d$. This is because
the critical points of the phase function may
happen in a neighborhood of $1/2\leq |t|\leq 2$, say $1/4\leq |t|\leq 5/2$,
which provides a stationary phase for the Fourier integral $\mathfrak{m}_d$.
This stationary phase gives a highly oscillatory factor in the integral. 
We expect a suitable decay from the highly oscillatory factor.  In this 
section we should prove Theorem \ref{thmtriest1m} by utilizing essentially a $TT^*$
method.  \\

Let $\Lambda_{j,m}(f_1, f_2, f_3)= \langle \mathcal{B}_{j,m}(\check{f_1}, \check{f_2}), \check{f_3} \rangle
$.
To prove Theorem \ref{thmtriest1m}, it suffices to prove the following $L^2$ estimate 
for the trilinear form $\Lambda_{j,m}(f_1, f_2, f_3)$,
\begin{equation}\label{estLajm222}
\left|\Lambda_{j,m}(f_1, f_2, f_3)  \right|\leq C2^{\frac{-(d-1)|j|-m}{2}}
2^{- \frac{m-(d-1)|j|}{6}} \|f_1\|_{2}\|f_2\|_{2} \|{f_3} \|_2\,,
\end{equation}
holds for all $f_1, f_2, f_3 \in L^2$. Clearly $\Lambda_{j,m}(f_1, f_2, f_3)$ can be 
expressed as if $j>0$, 
$$
 2^{-\frac{(d-1)j}{2}} \iint f_1(\xi)\wh\Phi(\xi) f_2(\eta)\wh\Phi(\eta) f_3\left(2^{-(d-1)j}\xi+\eta\right) 
    \mathfrak{m}_d\left(2^{m}\xi, 2^{m}\eta\right) d\xi d\eta\,, 
$$
and if $j\leq 0$, 
$$
 2^{\frac{(d-1)j}{2}} \iint f_1(\xi)\wh\Phi(\xi) f_2(\eta)\wh\Phi(\eta) f_3\left(\xi+2^{(d-1)j}\eta\right) 
    \mathfrak{m}_d\left(2^{m}\xi, 2^{m}\eta\right) d\xi d\eta\,, 
$$

Whenever $\xi, \eta\in {\operatorname {supp}}\wh\Phi$, the second order derivative 
of the phase function $\phi_{m,\xi, \eta}(t)=2^m(\xi t+\eta t^d)$ is comparable to 
$2^m$. We only need to focus on the worst situation when there is a critical point 
of the phase function in a small neighborhood of $\operatorname{supp}\rho$. Thus 
the method of stationary phase yields
\begin{equation}\label{prinpartofmd}
\mathfrak{m}_d\left(2^{m}\xi, 2^{m}\eta\right)\sim 2^{-m/2} e^{ i c_d 2^m \xi^{d/(d-1)}\eta^{-1/(d-1)}}\,,
\end{equation}   
where $c_d$ is a constant depending only on $d$.  Henceforth we reduce Theorem \ref{thmtriest1m}
to the following lemma.

\begin{proposition}\label{triosc*222}
Let $\Lambda^*_{j,m}$ be defined by if $j>0$ then
\begin{equation}\label{defofLa*jm}
\Lambda^*_{j,m}(f_1, f_2, f_3) = \iint f_1(\xi)\wh\Phi(\xi) f_2(\eta)\wh\Phi(\eta) f_3\left(2^{-(d-1)j}\xi+\eta\right) 
 e^{ i c_d 2^m \xi^{d/(d-1)}\eta^{-1/(d-1)}} d\xi d\eta\,,
\end{equation}
and if $j\leq 0$, then 
\begin{equation}\label{defofLa*jmneg}
\Lambda^*_{j,m}(f_1, f_2, f_3) = \iint f_1(\xi)\wh\Phi(\xi) f_2(\eta)\wh\Phi(\eta) f_3
\left(\xi+2^{(d-1)j}\eta\right) e^{ i c_d 2^m \xi^{d/(d-1)}\eta^{-1/(d-1)}} d\xi d\eta\,,
\end{equation}
Then there exists a positive constant $C$ such that 
\begin{equation}\label{estLa*jm222}
\left|\Lambda^*_{j,m}(f_1, f_2, f_3)  \right|\leq C2^{-\frac{m-(d-1)|j|}{6}} \|f_1\|_{2}\|f_2\|_{2} \|{f_3} \|_2\,,
\end{equation}
holds for all $f_1, f_2, f_3 \in L^2$.
\end{proposition}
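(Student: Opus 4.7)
The plan is to perform a $TT^*$ reduction, invoke H\"ormander's bilinear oscillatory integral estimate, and then interpolate with a trivial bound by splitting the resulting $s$-integral. I focus on $j>0$; the case $j\le 0$ is handled symmetrically by interchanging the roles of $\xi$ and $\eta$. Set $\delta=2^{-(d-1)|j|}$ and $\phi(\xi,\eta)=\xi^{d/(d-1)}\eta^{-1/(d-1)}$, so the phase in $\Lambda^*_{j,m}$ reads $c_d2^m\phi(\xi,\eta)$.

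First, after Cauchy--Schwarz in the variable $u=\delta\xi+\eta$ we have $|\Lambda^*_{j,m}|\leq \|f_3\|_2\|G\|_2$, where $G(u)=\int f_1(\xi)f_2(u-\delta\xi)\wh\Phi(\xi)\wh\Phi(u-\delta\xi)e^{ic_d2^m\phi(\xi,u-\delta\xi)}d\xi$. Expanding $\|G\|_2^2$ and substituting $w=u-\delta\xi_1$, $s=\xi_1-\xi_2$, we obtain
\[
\|G\|_2^2=\int K(s)\,ds,\quad K(s)=\iint f_1(\xi_1)\overline{f_1(\xi_1-s)}f_2(w)\overline{f_2(w+\delta s)}\Psi_0\,e^{i\Theta(\xi_1,w,s)}\,d\xi_1\,dw,
\]
with phase $\Theta(\xi_1,w,s)=c_d2^m[\phi(\xi_1,w)-\phi(\xi_1-s,w+\delta s)]$ and $\Psi_0$ the product of the surviving cutoff factors.

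Applying the mean value theorem to $\psi:=\partial_\xi\partial_\eta\phi$ yields $\partial_{\xi_1}\partial_w\Theta=c_d2^m[\psi(\xi_1,w)-\psi(\xi_1-s,w+\delta s)]=c_d2^m s\cdot[\overline{\psi_\xi}-\delta\overline{\psi_\eta}]$, where overlines denote averages along the segment. On the positive support $[1/2,2]^2$ the function $\psi_\xi$ has fixed sign and is bounded below in absolute value, and together with the smallness of $\delta$ (from $|j|>L$) this gives the uniform lower bound $|\partial_{\xi_1}\partial_w\Theta|\geq c\cdot2^m|s|$. Viewing $K(s)$ as a bilinear form in the amplitudes $F_s(\xi_1)=f_1(\xi_1)\overline{f_1(\xi_1-s)}$ and $H_s(w)=f_2(w)\overline{f_2(w+\delta s)}$, H\"ormander's one-dimensional bilinear oscillatory integral theorem yields $|K(s)|\leq C(2^m|s|)^{-1/2}\|F_s\|_2\|H_s\|_2$. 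The trivial bound $|K(s)|\leq\|f_1\|_2^2\|f_2\|_2^2$ (from Cauchy--Schwarz in $(\xi_1,w)$) controls the region $|s|\leq\epsilon$ with a contribution $C\epsilon\|f_1\|_2^2\|f_2\|_2^2$. For $|s|>\epsilon$, Fubini gives $\int\|F_s\|_2^2\,ds\leq C\|f_1\|_2^4$, and the substitution $s'=\delta s$ yields $\int\|H_s\|_2^2\,ds\leq C\delta^{-1}\|f_2\|_2^4$; Cauchy--Schwarz in $s$ then produces
\[
\int_{|s|>\epsilon}|K(s)|\,ds\leq C(2^m\epsilon)^{-1/2}\delta^{-1/2}\|f_1\|_2^2\|f_2\|_2^2.
\]
Optimizing at $\epsilon\sim(2^m\delta)^{-1/3}$ gives $\|G\|_2^2\leq C(2^m\delta)^{-1/3}\|f_1\|_2^2\|f_2\|_2^2=C\cdot2^{-(m-(d-1)|j|)/3}\|f_1\|_2^2\|f_2\|_2^2$, and taking square roots delivers the claimed estimate.

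The main obstacle is securing the uniform lower bound $|\partial_{\xi_1}\partial_w\Theta|\geq c\cdot 2^m|s|$ across the full support. This relies on (i) the smallness of $\delta=2^{-(d-1)|j|}$ guaranteed by $|j|>L$, so that $s\,\overline{\psi_\xi}$ dominates the correction $\delta s\,\overline{\psi_\eta}$; and (ii) a preliminary decomposition of $\wh\Phi$ according to the signs of $\xi$ and $\eta$, ensuring that the line segment $(\xi_1-ts,\,w+t\delta s)$, $t\in[0,1]$, stays in the smooth locus of $\phi(\xi,\eta)=\xi^{d/(d-1)}\eta^{-1/(d-1)}$. Beyond these technical points the argument is a clean two-step application of Cauchy--Schwarz combined with H\"ormander's theorem.
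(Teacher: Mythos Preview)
Your proposal is correct and follows essentially the same route as the paper's proof: a Cauchy--Schwarz/$TT^*$ reduction to a bilinear oscillatory integral, the mean value theorem to obtain the mixed Hessian lower bound $|\partial_{\xi_1}\partial_w\Theta|\gtrsim 2^m|s|$ (using that $\delta=2^{-(d-1)j}$ is small), H\"ormander's theorem for the decay in $s$, and the same split-and-optimize step in the $s$-integral to produce the exponent $-(m-(d-1)|j|)/6$. The paper presents the initial change of variables slightly differently (writing $b_1=1-\delta$, $b_2=\delta$ and substituting before Cauchy--Schwarz), but the resulting phase, the Hessian bound, and the final optimization are identical to yours.
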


\begin{proof}
Without loss of generality, we assume that $\wh\Phi$ is supported on $[1/2, 2]$ (or
$[-2, -1/2]$). And we only give a proof for the case $j>0$ since a similar argument 
yields the case $j\leq 0$.  
Let $\phi_{d,m}$ be a phase function defined by 
$$
 \phi_{d,m}(\xi, \eta) = c_d \xi^{d/(d-1)}\eta^{-1/(d-1)}\,.
$$ 
And let $b_1=1-2^{-(d-1)j}$ and $b_2=2^{-(d-1)j}$. 
Changing variable 
$\xi\mapsto \xi-\eta$ and $\eta\mapsto b_1\xi+b_2\eta$, 
we have that $\Lambda^*_{j, m}(f_1,f_2,f_3)$ equals
$$
\iint \!\! f_1(\xi-\eta) f_2(b_1\xi+b_2\eta) f_3(\xi)
\wh\Phi(\xi-\eta)\wh\Phi(b_1\xi+b_2\eta) e^{i2^{m}
\phi_{d,m}(\xi-\eta, b_1\xi+b_2\eta ) } d\xi d\eta.
$$
Thus by Cauchy-Schwarz we dominate $|\La^*_{j, m}(f_1, f_2, f_3)|$
by
$$
 \big\| \bT_{d, j, m}(f_1, f_2)\big\|_2\|f_3\|_2\,,
$$
where $\bT_{d, j, m}$ is defined by
$$
\bT_{d, j,  m}(f_1, f_2)(\xi)\!
=\!\!\! \int\!\! f_1(\xi-\eta) f_2(b_1\xi+b_2\eta) 
\wh\Phi(\xi-\eta)\wh\Phi(b_1\xi+b_2\eta) e^{i2^{m}
\phi_{d,m}(\xi-\eta, b_1\xi+b_2\eta )} d\eta.
$$ 
It is easy to see that
 $\big\| \bT_{d,j,  m}(f_1, f_2)\big\|_2^2$ equals to
$$
\int\bigg(\iint F(\xi, \eta_1, \eta_2) G(\xi, \eta_1, \eta_2)
e^{ i 2^{m} \big(\phi_{d,m}(\xi-\eta_1, b_1\xi+b_2\eta_1 )-
\phi_{d,m}(\xi-\eta_2, b_1\xi+b_2\eta_2) \big)}
  d\eta_1d\eta_2 \bigg)  d\xi\,, 
$$
where  
$$
F(\xi, \eta_1, \eta_2)= \big(f_1\wh\Phi\big)(\xi-\eta_1 )
\overline{\big(f_1\wh\Phi\big)(\xi-\eta_2)  }
$$
$$
G(\xi, \eta_1,\eta_2)=\big(f_2\wh\Phi\big)(b_1\xi+b_2\eta_1 )
 \overline{\big(f_2\wh\Phi\big)(b_1\xi+b_2\eta_2)}\,.
$$
Changing variables $\eta_1\mapsto \eta$ and $\eta_2\mapsto \eta+\tau$, 
we see that $\big\|\bT_{d,j, m}(f_1, f_2) \big\|_2^2$ equals to
$$
\int\!\!\bigg(\!\int\!\!\!\int \!F_\tau(\xi-\eta) G_{\tau}(b_1\xi+b_2\eta)
 e^{i 2^{m}\!\big(\phi_{d,m}(\xi-\eta, b_1\xi+b_2\eta)-
\phi_{d,m}\left(\xi-\eta-\tau, b_1\xi+b_2(\eta+\tau)\right )
\big)}  \!d\xi d\eta                           \!\!  \bigg)d\tau , 
$$
where 
$$
F_\tau(\cdot) = \big(f_1\wh\Phi\big)(\cdot )
\overline{\big(f_1\wh\Phi\big)(\cdot -\tau )  }
$$
$$
G_\tau(\cdot) = \big(f_2\wh\Phi\big)(\cdot )
\overline{\big(f_2\wh\Phi\big)(\cdot + b_2\tau )  }\,.
$$
Changing coordinates to $(u,v)=(\xi-\eta, b_1\xi+b_2\eta)$, 
the inner integral becomes
\begin{equation}\label{inner}
\int\!\int F_\tau(u)G_\tau(v) e^{i2^{m}
 \ti\bQ_\tau (u, v)  } du dv\,,
\end{equation}
where $\ti\bQ_{\tau}$ is defined by 
$$
\ti\bQ_\tau(u, v)=\phi_{d,m}(u, v)-\phi_{d,m}(u-\tau, v+b_2\tau)\,.
$$
When $j$ is large enough, the mean value theorem yields 
\begin{equation}\label{mixdlarge111}
 \left|\partial_u\partial_v \ti\bQ_\tau(u, v)\right|\geq C\tau\,,
\end{equation}
if $u, v, u-\tau, u+b_2\tau\in{\rm supp}\wh\Phi$.

A well-known H\"ormander theorem on the non-degenerate phase  \cite{Hor, PSt1} 
gives that (\ref{inner}) is estimated  by 
$$
C \min\big\{1, 2^{-m/2}|\tau|^{-1/2 } \big\} 
\big\|F_\tau\big\|_2 \big\|G_\tau \big\|_2 \,.
$$
Hence by Cauchy-Schwarz inequality $\big\| \bT_{d, j,  m}(f_1, f_2)\big\|_2^2$ is bounded by
$$
 \tau_0 \|f_1\|_2^2\|f_2\|_2^2 + C\int_{ \tau_0<|\tau|<10} 
  \min\big\{1, 2^{-m/2}|\tau|^{-1/2 } \big\} 
\big\|F_\tau\big\|_2 \big\|G_\tau \big\|_2 d\tau\,
$$
for any $\tau_0>0$. By one more use of Cauchy-Schwarz inequality,
$\big\| \bT_{d, j,  m}(f_1, f_2)\big\|_2^2$ is dominated by
$$
 \left(\tau_0 +
 C \tau_0^{-1/2} 2^{-m/2}2^{(d-1)j/2}\right)\|f_1\|_2^2\|f_2\|_2^2\,,
$$
for any $\tau_0>0$. 
 Thus we have 
\begin{equation}\label{La*jmest422}
 \big| \La_{j,  m}^*(f_1, f_2, f_3)\big|\leq 
C 2^{\frac{(d-1)j-m}{6} } \|f_1\|_2\|f_2\|_2\|f_3\|_2\,.
\end{equation}
This completes the proof of Proposition  \ref{triosc*222}.

\end{proof}

It is easy to see that 
\begin{equation}\label{La*jmest4221}
\big| \La_{j,  m}^*(f_1, f_2, f_3)\big|\leq 
C 2^{-\e m} \|f_1\|_2\|f_2\|_2\|f_3\|_2
\end{equation}
fails for all  $|j|\geq m/(d-1)$. Indeed, 
let us only consider the case $j> m/(d-1)$. Assume that (\ref{La*jmest4221})
holds for all $j> m/(d-1)$. Let $j\rightarrow \infty$, then (\ref{La*jmest4221})
implies 
\begin{equation}
\big| \La_{m}^*(f_1, f_2, f_3)\big|\leq 
C 2^{-\e m} \|f_1\|_2\|f_2\|_2\|f_3\|_2\,,
\end{equation}
where 
$$
\La_{m}^*(f_1, f_2, f_3)=\iint f_1(\xi)\wh\Phi(\xi) f_2(\eta)\wh\Phi(\eta) f_3\left(\eta\right) 
 e^{ i c_d 2^m \xi^{d/(d-1)}\eta^{-1/(d-1)}} d\xi d\eta\,.
$$
Simply taking $f_2=f_3$, we obtain
\begin{equation}
 \sup_{\eta\sim 1}\left| \int f_1(\xi)\wh\Phi(\xi) e^{i c_d 2^m \xi^{d/(d-1)}\eta^{-1/(d-1)}}d\xi  \right|
\leq C 2^{-\e m}\|f_1\|_2\,.
\end{equation}
This clearly can not be true and hence we get a contradiction. Therefore, 
(\ref{La*jmest4221}) does not hold for all $j>m/(d-1)$. From this fact, we know 
that the $TT^*$ method can not work for the case $|j|>m/(d-1)$. In 
the following sections, we have to
introduce a concept of uniformity and employ a "quadratic" Fourier analysis.

\section{Uniformity}\label{unif}
\setcounter{equation}0

We introduce a concept related to a notion of uniformity employed by
Gowers \cite{G}. A similar uniformity was utilized in \cite{CLTT}.
Let $\sigma\in (0, 1]$, let $\mQ$ be a collection of some
real-valued measurable functions, and fix a bounded interval $\bI$
in $\ZR $.

\begin{definition}\label{defofunif}
A function $f\in L^2(\bI)$ is $\sigma$-uniform in $\mQ$
 if
\begin{equation}\label{uniformity0}
 \bigg|\int_\bI f(\xi) e^{-i q(\xi)} d\xi\bigg|\leq \sigma\|f\|_{L^2(\bI)}\,
\end{equation}
for all $q\in {\mathcal Q}$. Otherwise, $f$ is said to be
$\sigma$-nonuniform in $\mQ$.
\end{definition}

\begin{theorem}\label{uniformitythm}
Let $L$ be a bounded sub-linear functional from $L^2(\bI)$ to ${\mathbb
C}$, let $\bS_\sigma$ be the set of all functions that are
$\sigma$-uniform in $\mQ$, and let
\begin{equation}\label{unifnorm1}
U_\sigma = \sup_{f\in\bS_\sigma}\frac{|L(f)|}{\|f\|_{L^2(\bI)}}\,.
\end{equation}
Then for all functions in $L^2(\bI)$,
\begin{equation}\label{Lnonunif}
 \big|L(f)\big|\leq \max\big\{U_\sigma,  2\sigma^{-1}
 Q\big\}\|f\|_{L^2(\bI)}\,\,,
\end{equation}
 where
\begin{equation}\label{defofB}
 Q = \sup_{q\in \mQ}\big|L(e^{iq})\big|\,.
\end{equation}
\end{theorem}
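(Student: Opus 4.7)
The plan is to run a near-extremizer dichotomy argument, reducing the problem to a single peel-off rather than an infinite iteration. Set
$$M := \sup_{f \in L^2(\bI),\, f \neq 0} \frac{|L(f)|}{\|f\|_{L^2(\bI)}},$$
which is finite since $L$ is bounded. I aim to show that $M \le \max\{U_\sigma,\, 2\sigma^{-1}Q\}$. Fix $\epsilon > 0$ and pick a normalized $f$ with $\|f\|_{L^2(\bI)} = 1$ and $|L(f)| > M - \epsilon$. The dichotomy is then: either $f \in \bS_\sigma$, in which case $M - \epsilon < |L(f)| \le U_\sigma$ and I conclude $M \le U_\sigma$ after letting $\epsilon \to 0$; or $f \notin \bS_\sigma$, which is the substantive case.

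In the non-uniform case I select $q \in \mQ$ with $\bigl|\int_\bI f(\xi) e^{-i q(\xi)} d\xi\bigr| > \sigma$, and perform the orthogonal decomposition
$$f = a\, e^{iq} + g, \qquad a := |\bI|^{-1}\!\int_\bI f(\xi)e^{-iq(\xi)}\,d\xi,$$
so that $g \perp e^{iq}$ in $L^2(\bI)$. Pythagoras gives $1 = |a|^2 |\bI| + \|g\|_{L^2(\bI)}^2$. Sublinearity of $L$ together with the definition of $M$ yield
$$M - \epsilon < |L(f)| \le |a|\,|L(e^{iq})| + |L(g)| \le |a|\,Q + M\,\|g\|_{L^2(\bI)},$$
i.e.\ $M\bigl(1 - \|g\|_{L^2(\bI)}\bigr) \le |a|\,Q + \epsilon$. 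Applying the elementary inequality $1 - \|g\|_{L^2(\bI)} \ge \tfrac{1}{2}\bigl(1 - \|g\|_{L^2(\bI)}^2\bigr) = \tfrac{1}{2}|a|^2|\bI|$ (valid since $\|g\|_{L^2(\bI)} \le 1$), I obtain $\tfrac{M}{2}|a|^2|\bI| \le |a|Q + \epsilon$. The non-uniformity hypothesis gives $|a||\bI| > \sigma$, so $|a|$ is bounded away from zero; dividing by $|a|$ and sending $\epsilon \to 0$ produces $M\sigma/2 \le Q$, i.e.\ $M \le 2\sigma^{-1} Q$. Combining both branches yields the desired bound.

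I do not anticipate any serious obstacle. The two non-trivial ingredients are (i) the use of the near-extremizer of $M$ so that a single peel-off — rather than an infinite greedy iteration — suffices, and (ii) the conversion of the Pythagorean deficit $1 - \|g\|_{L^2(\bI)}^2 = |a|^2|\bI|$ into a linear-in-norm bound via $1 - x \ge \tfrac{1}{2}(1 - x^2)$ for $x \in [0,1]$. Sublinearity of $L$ is used exactly once, at the step $|L(a e^{iq} + g)| \le |a|\,|L(e^{iq})| + |L(g)|$, and $L$ being $\mathbb{C}$-valued causes no trouble since only absolute values enter the estimates.
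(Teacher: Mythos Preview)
Your proof is correct and follows essentially the same route as the paper's: take a near-extremizer, perform the orthogonal decomposition $f = a e^{iq} + g$, apply sublinearity together with Pythagoras, and convert the quadratic deficit via $1-x \ge \tfrac12(1-x^2)$ (the paper writes this as $\sqrt{1-x}\le 1-x/2$). The only cosmetic difference is that the paper first sets $A_1 = \sup_{f\notin\bS_\sigma}|L(f)|/\|f\|$ and argues that $U_\sigma < A_1$ forces $A_1 \le 2\sigma^{-1}Q$, thereby avoiding the situation in your write-up where the branch of the dichotomy may change with $\epsilon$; your version is still fine once you observe that for each $\epsilon$ one obtains $M \le \max\{U_\sigma,\,2\sigma^{-1}Q\} + C\epsilon$ and then let $\epsilon\to 0$.
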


\begin{proof}
Clearly the complement $\bS_\sigma^c$ is a set of all functions that are
$\sigma$-nonuniform in $\mQ$.  Let us set
$$
A:= \sup_{f\in L^2(\bI)}\frac{|L(f)|}{\|f\|_{L^2(\bI)}}\,\,\,\,{\rm
and} \,\,\,\, A_1:=
\sup_{f\in\bS^c_\sigma}\frac{|L(f)|}{\|f\|_{L^2(\bI)}}\,\,.
$$
Clearly $A=\max\{ A_1, U_\sigma\}$. In order to obtain (\ref{Lnonunif}), it
suffices to prove that if $U_\sigma < A_1$, then
\begin{equation}\label{nonunif11}
A_1\leq 2\sigma^{-1}Q\,.
\end{equation}

For any $\e>0$, there exists a function $f\in \bS^c_\sigma$ such
that
\begin{equation}\label{supLf}
(A_1-\e)\|f\|_{L^2(\bI)} \leq |L(f)|\,.
\end{equation}

Let $\langle \cdot, \cdot\rangle_\bI$ be an inner product on
$L^2(\bI)$ defined by
$$
\langle f, g\rangle_\bI =\int_\bI f(x){\overline {g(x)} }dx\,,
$$
for all $f, g\in L^2(\bI)$.  Since $f$ is $\sigma$-nonuniform in
$\mQ$, there exists a function $q$ in $\mQ$ such that
\begin{equation}\label{nonunif1}
\big| \langle f, e^{iq}\rangle_\bI\big| \geq \sigma
\|f\|_{L^2(\bI)}\,.
\end{equation}
Let $g\in L^2(\bI)$ such that $g\perp e^{iq}$ and
$\|g\|_{L^2(\bI)}=1$. Then we can write $f$ as
\begin{equation}\label{ftogeq}
 f = \langle f, g\rangle_\bI \,g + \frac{\langle f,
 e^{iq}\rangle_\bI}{|\bI|}e^{iq}\,.
\end{equation}
Sub-linearity of $L$ and the triangle inequality then yield
\begin{equation}\label{Lftogq}
\big|L(f)\big| \leq  \big|\langle f, g\rangle_\bI\big| \big|
L(g)\big|
   + |\bI|^{-1}\big|\langle f, e^{iq}\rangle_\bI\big|
   \big|L(e^{iq})\big|\,.
\end{equation}
Notice that $A=A_1$ if $U_\sigma < A_1$ and
\begin{equation}\label{fgest}
 \langle f, f\rangle_\bI= \big|\langle f, g\rangle_\bI\big|^2
   + |\bI|^{-1}\big|\langle f, e^{iq}\rangle_\bI\big|^2\,.
\end{equation}
Then from (\ref{supLf}) and (\ref{Lftogq}), we have
\begin{equation}\label{A1est00}
 (A_1-\e)\|f\|_{L^2(\bI)}\leq A_1\|f\|_{L^2(\bI)}\sqrt{1- \frac{\big|\langle f, e^{iq}\rangle_\bI\big|^2 }{|\bI|\langle f, f\rangle_\bI}}
  +|\bI|^{-1}\big|\langle f, e^{iq}\rangle_\bI\big| Q\,.
\end{equation}
 Applying the elementary inequality $ \sqrt{1-x} \leq 1-x/2 $ if
$0\leq x\leq 1$, we then get
\begin{equation}\label{A1est01}
 A_1 \leq \frac{2\|f\|_{L^2(\bI)}}{\big|\langle f,
 e^{iq}\rangle_\bI\big|}Q  + \e |I|\frac{2\|f\|_{L^2(\bI)}^2 }{\big|\langle f,
 e^{iq}\rangle_\bI\big|^2}\,.
\end{equation}
From (\ref{nonunif1}), we have
\begin{equation}\label{A1est02}
 A_1 \leq 2\sigma^{-1}Q  + 2\e |I| \sigma^{-2}\,.
\end{equation}
Now let $\e\rightarrow 0$ and we then obtain (\ref{nonunif11}).
Therefore we complete the proof.

\end{proof}

\section{Estimates of the trilinear forms, Case $j>0$}\label{triest000}
\setcounter{equation}0

Without loss of generality, in the following sections we assume that
$f_i$ is supported on $\bI_{i}$
for $i\in \{1,2,3\}$, where $\bI_i$ is 
either $[1/16, 39/16] $ or $[-39/16, -1/16] $. 
Let $\mQ_1$ be a set of some functions defined by
\begin{equation}\label{defofmQ1}
 \mQ_1= \left\{ a\xi^{{d}/{d-1}} + b\xi :  2^{m-100} \leq |a|\leq
 2^{m+100}  \,\,{\rm and}\,\, a, b\in\ZR
     \right\}\,.
\end{equation}

\begin{proposition}\label{prop1}
Let $f_1$ be  $\sigma$-uniform in $\mQ_1$. 
And let $j>0$ and  $ \Lambda_{j,m, n}(f_1, f_2, f_3)  $ be defined as in (\ref{defofLajmn00}).
Then 
 there exists a constant $C$ independent of $j, m, n,  f_1$ such that
\begin{equation}\label{f1unifest}
\left |  \Lambda_{j,m, n}(f_1, f_2, f_3) \right|\leq
 C 2^{-\frac{(d-1)j}{2}-\frac{m}{2}} \max\left\{2^{-100m},  2^{\frac{-(d-1)j+m}{2}}, \sigma  \right\} 
\prod_{i=1}^3\|f_i\|_{L^2(\bI_i)}\,,
\end{equation} 
holds for all $f_2\in L^2(\bI_2)$ and $f_3\in L^2(\bI_3)$. 
\end{proposition}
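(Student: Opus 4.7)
The plan is to combine the method of stationary phase applied to $\mathfrak{m}_d$ with the $\sigma$-uniformity of $f_1$ against the phases in $\mQ_1$ that arise in the $\xi$-integrand. First I would apply the method of stationary phase to $\mathfrak{m}_d(2^m\xi,2^m\eta)$. On $\operatorname{supp}\wh\Phi\times\operatorname{supp}\wh\Phi$ the phase $\psi(t)=2^m(\xi t+\eta t^d)$ has a single non-degenerate critical point $t_c=-(\xi/d\eta)^{1/(d-1)}$ near $\operatorname{supp}\rho$ with $|\psi''(t_c)|\sim 2^m$, so
\begin{equation*}
\mathfrak{m}_d(2^m\xi,2^m\eta)=2^{-m/2}A(\xi,\eta)e^{ic_d 2^m\xi^{d/(d-1)}\eta^{-1/(d-1)}}+E(\xi,\eta),
\end{equation*}
where $A$ is smooth and bounded and $|E|\leq C_N 2^{-Nm}$ for every $N$. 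The $E$-contribution to $\Lambda_{j,m,n}$ is bounded by $C\,2^{-100m}\,2^{-(d-1)j/2}\prod\|f_i\|_2$ via Cauchy--Schwarz, supplying the $2^{-100m}$ alternative in the maximum. A direct Cauchy--Schwarz on the main term, using $|2^{-m/2}A|\leq C\,2^{-m/2}$, also gives $|\Lambda_{j,m,n}|\leq C\,2^{-(d-1)j/2-m/2}\prod\|f_i\|_2$, which in the regime $j\leq m/(d-1)$ already matches the proposition's bound since $2^{(m-(d-1)j)/2}\geq 1$; this accounts for the second alternative.

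The substantive content of the proposition is therefore the $\sigma$-alternative in the regime $j>m/(d-1)$. With $\mathfrak{m}_d$ replaced by its leading term and the substitution $\mu=2^{-(d-1)j}\xi+\eta$ carried out, the main trilinear form becomes
\begin{equation*}
\Lambda^*_{j,m,n}=2^{-(d-1)j/2-m/2}\iint f_1(\xi)\wh\Phi(\xi)f_2(\mu-2^{-(d-1)j}\xi)f_3(\mu)\wt A(\xi,\mu)\,e^{ic_d 2^m\mu^{-1/(d-1)}\xi^{d/(d-1)}+ir_{j,m}(\xi,\mu)}\,d\xi d\mu,
\end{equation*}
where $r_{j,m}(\xi,\mu)$ is the Taylor remainder of $(\mu-2^{-(d-1)j}\xi)^{-1/(d-1)}$ about $\mu$ and satisfies $|r_{j,m}|\leq C\,2^{m-(d-1)j}\leq C$ together with uniform bounds on its $\xi$-derivatives. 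The leading $\xi$-phase $c_d 2^m\mu^{-1/(d-1)}\xi^{d/(d-1)}$ lies in $\mQ_1$, since the range of $f_3$ forces $|c_d 2^m\mu^{-1/(d-1)}|\in[2^{m-100},2^{m+100}]$. Expanding the smooth bounded factor $e^{ir_{j,m}}\wt A$ in an absolutely convergent Fourier series in $\xi$ on a fixed box yields a superposition of genuine $\mQ_1$-atoms $e^{iq(\xi)}$ with $\mu$-dependent coefficients of rapid decay, and the $\sigma$-uniformity hypothesis then bounds each atomic $\xi$-integral against $f_1\wh\Phi$ by $C\sigma\|f_1\|_2$.

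The principal obstacle is that the amplitude still contains $f_2(\mu-2^{-(d-1)j}\xi)$, an $L^2$ function that depends on $\xi$ via a slow shift. A direct Fourier expansion of this factor gives rise to the dual-space $L^1$-norm of $f_2$, which is not controlled by $\|f_2\|_{L^2}$. The remedy is a level-set argument in the dual variable: I would combine the pointwise $\sigma$-bound $|J|\leq C\sigma\|f_1\|_2$ on the atomic integrals $J$ with the Plancherel estimate $\|J\|_{L^2(ds)}\leq C\,2^{(d-1)j/2}\|f_1\|_2$, where $s$ is the dual variable to the $\xi$-shift in $f_2$, and split the $s$-integration at a scale balancing the two bounds. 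Applying Cauchy--Schwarz in the remaining variables against $f_2,f_3\in L^2$ then delivers the stated $\sigma$-alternative, and the three-term maximum in the proposition is precisely what emerges from the trade-off between the uniformity, Plancherel, and stationary-phase estimates.
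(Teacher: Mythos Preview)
Your approach has a genuine gap in the $\sigma$-alternative, precisely at the point you flag as the ``principal obstacle.'' After stationary phase and the substitution $\mu=2^{-(d-1)j}\xi+\eta$, the amplitude still carries $f_2(\mu-2^{-(d-1)j}\xi)$, and the proposed level-set argument does not close. The two inputs you list --- the pointwise bound $|J(s,\mu)|\leq C\sigma\|f_1\|_2$ and the Plancherel bound $\|J(\cdot,\mu)\|_{L^2(ds)}\leq C\,2^{(d-1)j/2}\|f_1\|_2$ --- do not combine to pair against $\check f_2\in L^2$ while retaining the factor $\sigma$. Splitting the $s$-integral at some scale $S$, the small-$s$ piece costs a factor $S^{1/2}$ (Cauchy--Schwarz against $\check f_2$) times the pointwise $\sigma$-bound, while the large-$s$ piece has no decay available: Cauchy--Schwarz against $\|J\|_{L^2(ds)}$ simply returns the trivial bound $2^{-(d-1)j/2-m/2}\prod_i\|f_i\|_2$. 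No choice of $S$ balances these to produce $\sigma$ alone. The same difficulty blocks any direct Fourier expansion of $f_2$ here, as you note.

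The paper sidesteps this coupling by working in \emph{physical} space rather than frequency. It partitions the $x$-axis into intervals $\bI_{m,\ell}$ of length $2^m$ and on each freezes the slowly varying argument of $\check f_1$: replace $R_{\Phi_1}\check f_1(2^{-(d-1)j}x-2^mt)$ by $R_{\Phi_1}\check f_1(2^{-(d-1)j}\alpha_{m,\ell}-2^mt)$ for a fixed $\alpha_{m,\ell}\in\bI_{m,\ell}$. In the frozen piece the $\xi$-integral becomes an honest Fourier multiplier in $\eta$, with symbol $\int f_1(\xi)\wh\Phi_1(\xi)e^{i\phi_{d,m,\eta}(\xi)}\,d\xi$ and $\phi_{d,m,\eta}\in\mQ_1$; $\sigma$-uniformity bounds the symbol by $\sigma\|f_1\|_2$, and Plancherel against the (now $\xi$-independent) $f_2,f_3$ finishes. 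The unfrozen remainder is controlled by two endpoint estimates on $\check f_1$ --- one via the mean value theorem with factor $2^{-(d-1)j+m}\|\check f_1\|_\infty$, one via a change of variables with Jacobian $\sim 2^m$ giving $2^{-m}\|\check f_1\|_1$ --- whose interpolation yields the $2^{(-(d-1)j+m)/2}$ term in the maximum for \emph{all} $j>0$. Note that your direct Cauchy--Schwarz only recovers this second alternative when $j\leq m/(d-1)$; the freezing/interpolation step is what supplies it in the regime $j>m/(d-1)$ where the proposition is actually used.
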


\begin{proof}
Let $\Id_{m,l}=\Id_{\bI_{m,l}}$ and let ${\mathcal B}_{j,m,n, \ell}$ be a bilinear operator defined by
$$
 {\mathcal B}_{j,m,n, \ell}(f,g)(x) = {\mathcal B}_{j,m,n}(f,g)(x) \Id_{m,\ell}(x)\,,
$$
for all $f, g$.
Decompose $\Lambda_{j,m, n}(f_1, f_2, f_3)$ into
$\sum_{\ell} \Lambda_{j,m,n, \ell}$,
where 
$$ 
\Lambda_{j,m,n, \ell}(f_1, f_2, f_3) = 
\left\langle  {\mathcal B}_{j,m,n, \ell}(\check {f_1}, \check{f_2}), 
  \check {f_3} \right\rangle \,.
$$
Let $\alpha_{m, \ell}$ be a fixed point in the interval $\bI_{m, \ell}$.
And set $F_{\Phi_1,j,m,\ell}(x, t)$ to be 
$$
 F_{\Phi_1,j,m,\ell}(x, t):=  R_{\Phi_1} \check {f_1} (2^{-(d-1)j} x - 2^m t)
     - R_{\Phi_1} \check{f_1}(2^{-(d-1)j}\alpha_{m, \ell} - 2^m t) 
$$

Split ${\mathcal B}_{j,m,n, \ell}(\check{f_1}, \check{f_2})$ into two terms:
$$
 {\mathcal B}_{j,m,n, \ell}^{(1)}\left(\check{f_1}, \check{f_2}\right) + 
{\mathcal B}_{j,m,n, \ell}^{(2)} \left(\check{f_1}, \check{f_2}\right)\,,
$$
where ${\mathcal B}_{j,m,n, \ell}^{(1)}\left(\check{f_1}, \check{f_2}\right)$ is equal to
$$
2^{-(d-1)j/2}\!\!
\int_{\mathbb R} \!F_{\Phi_1,j,m,\ell}(x, t)
 R_{\Phi_1} \check{f_2}(x - 2^{m} t^d ) \rho(t) dt
 \left( \Id^*_{(d-1)j+m, n}(x)  \Id_{m, \ell}(x) \right)
$$
and ${\mathcal B}_{j,m,n, \ell}^{(2)} \left(\check{f_1}, \check{f_2}\right)$ equals to
$$
2^{-(d-1)j/2}\!\!
\int_{\mathbb R} \!
     R_{\Phi_1} {\check{f_1}}(2^{-(d-1)j}\alpha_{m, \ell} - 2^m t) 
 R_{\Phi_1} \check{f_2}(x - 2^{m} t^d ) \rho(t) dt
 \left( \Id^*_{(d-1)j+m, n}(x) \Id_{m, \ell}(x)\right)\,.
$$
For $i=1, 2$, let $ \Lambda_{j,m, n}^{(i)}(f_1, f_2, f_3)$ denote
$$
\sum_\ell  \left\langle {\mathcal B}_{j,m,n, \ell}^{(i)}\left(\check{f_1}, \check{f_2}\right), 
  \check{f_3} \right\rangle  \,.
$$
We now start to prove that 
\begin{equation}\label{estB1jmnl1}
\left|\Lambda_{j,m, n}^{(1)}(f_1, f_2, f_3) \right|
\leq  2^{-\frac{(d-1)j}{2} } 2^{-(d-1)j+m} \left\| \check{f_1}\right\|_\infty
           \left\|\check{f_2}\right\|_2 \left\|\check{f_3}\right\|_2\,.
\end{equation} 
The mean value theorem and the smoothness of $\Phi_1$ yield 
that for $x\in\bI_{m,\ell}$, 
\begin{equation}\label{estdiffFjml1}
 \left| F_{\Phi_1,j,m,\ell}(x, t) \right|\leq C\left\| \check{f_1}\right\|_\infty
  2^{-(d-1)j} \left| x-\alpha_{m, \ell}\right| \leq  C2^{-(d-1)j+m}\left \| \check{f_1}\right\|_\infty \,.              
\end{equation} 
Because $|t|\sim 1$ when $t\in\operatorname{supp}\rho$, 
${\mathcal B}_{j,m,n, \ell}^{(1)}\left(\check{f_1}, \check{f_2}\right)$
can be written as
\begin{equation}\label{sumlk1111}
2^{-\frac{(d-1)j}{2}}\!\!
\int_{\mathbb R} \!F_{\Phi_1,j,m,\ell}(x, t)
 \sum_{\ell_0}\left(\Id_{m, \ell + \ell_0 } 
R_{\Phi_1} \check{f_2}\right) (x - 2^{m} t^d ) \rho(t) dt
 \left( \Id^*_{(d-1)j+m, n}(x)  \Id_{m, \ell}(x) \right)\,,
\end{equation}
where $\ell_0$ is an integer between $-10$ and $10$.
Putting absolute value throughout and applying (\ref{estdiffFjml1})
plus 
 Cauchy-Schwarz inequality, we then estimate 
$ \left|\Lambda_{j,m, n}^{(1)}(f_1, f_2, f_3) \right|$ by
$$
 C2^{-\frac{(d-1)j}{2}}2^{-(d-1)j+m}\left\| \check{f_1}\right\|_\infty 
\sum_{\ell_0=-10}^{10} 
\sum_\ell\left\|\Id_{m, \ell + \ell_0 } 
R_{\Phi_1} \check{f_2}  \right\|_2 
\left\| \Id_{m, \ell}\check{f_3}\right\|_2\,,
$$
which clearly gives (\ref{estB1jmnl1}) by one more use of Cauchy-Schwarz inequality.\\

We now prove that 
\begin{equation}\label{estB1jmnl122}
\left|\Lambda_{j,m, n}^{(1)}(f_1, f_2, f_3) \right|
\leq  2^{-\frac{(d-1)j}{2} } 2^{-m} \left\| \check{f_1}\right\|_1
           \left\|\check{f_2}\right\|_2 \left\|\check{f_3}\right\|_2\,.
\end{equation} 
From (\ref{sumlk1111}), we get that $\Lambda_{j,m, n}^{(1)}(f_1, f_2, f_3)$ equals to
$$
2^{-\frac{(d-1)j}{2}}\sum_{\ell_0=-10}^{10}
\sum_\ell \Lambda_{j,m, n,\ell_0, \ell,1}(f_1, f_2, f_3) -  \Lambda_{j,m, n,\ell_0, \ell,2}(f_1, f_2, f_3)\,,
$$
where $\Lambda_{j,m, n,\ell_0, \ell,1}(f_1, f_2, f_3)$ is equal to
$$
\int_{{\mathbb R}^2} \!\! R_{\Phi_1} \check {f_1} (2^{-(d-1)j} x \!- \!2^m t)
\!\left(\Id_{m, \ell + \ell_0 } 
R_{\Phi_1} \check{f_2}\right) \!(x \!- \!2^{m} t^d ) \rho(t)\! 
 \left( \Id^*_{(d-1)j+m, n} \Id_{m, \ell} \check{f_3} \right)\!(x)  dt dx
$$
and $\Lambda_{j,m, n,\ell_0, \ell,2}(f_1, f_2, f_3)$  equals to
$$
\int_{{\mathbb R}^2} \!\! R_{\Phi_1} \check {f_1} (2^{-(d-1)j} \alpha_{m, \ell} - \!2^m t)\!
\left(\Id_{m, \ell + \ell_0} 
R_{\Phi_1} \check{f_2}\right) \!(x - 2^{m} t^d ) \rho(t)\! 
 \left( \Id^*_{(d-1)j+m, n} \Id_{m, \ell} \check{f_3} \right)\!(x) dt dx .
$$
 Cauchy-Schwarz inequality 
yields that 
\begin{equation}\label{estLajmnkl2}
\left|\Lambda_{j,m, n,\ell_0, \ell,2}(f_1, f_2, f_3)\right| \leq 
C2^{-m}\left\| \check{f_1}\right\|_1
    \left\|\Id_{m, \ell + \ell_0 } 
R_{\Phi_1} \check{f_2}  \right\|_2 
\left\| \Id_{m, \ell}\check{f_3}\right\|_2\,.
\end{equation}
In order to obtain a similar estimate for $\Lambda_{j,m, n,\ell_0, \ell,1}(f_1, f_2, f_3) $,
we change variables by $u=2^{-(d-1)j}x-2^m t $ and $v=x-2^m t^d$ to express 
$ \Lambda_{j,m, n,\ell_0, \ell,1}(f_1, f_2, f_3) $ as
$$
\int\!\!\!\int R_{\Phi_1} \check {f_1}(u)
\left(\Id_{m, \ell + \ell_0 } 
R_{\Phi_1} \check{f_2}\right) (v)  \rho (t(u,v)) 
 \left( \Id^*_{(d-1)j+m, n} \Id_{m, \ell} \check{f_3} \right)\!(x(u,v)) 
 \frac {du dv}{J(u,v)} \,,
$$
where  $J(u,v)$ is the Jocobian $ \frac{\partial (u, v)}{\partial(x,t)}$.
It is easy to see that the Jocobian $ \frac{\partial (u, v)}{\partial(x,t)}\sim 2^m$.
As we did for $ \Lambda_{j,m, n,\ell_0, \ell,1}$, we dominate the previous integral by 
$$
C2^{-m}
\int\!\left| R_{\Phi_1} \check {f_1}(u) \right|\left\|\Id_{m, \ell + \ell_0 } 
R_{\Phi_1} \check{f_2}  \right\|_2
 \!\left( \int \left| \left(\Id_{m, \ell} \check{f_3} \right)\!(x(u,v))
 \rho(t(u,v)) \right|^2      dv     \right)^{\frac{1}{2}} \!\!du .
$$
Notice that $|\partial x/\partial v| \sim 1$ whenever $t\in{\operatorname{supp}}\rho$. 
We then estimate 
 \begin{equation}\label{estLajmnkl1}
\left|\Lambda_{j,m, n,\ell_0, \ell,1}(f_1, f_2, f_3)\right| \leq 
C2^{-m}\left\| \check{f_1}\right\|_1
    \left\|\Id_{m, \ell + \ell_0 } 
R_{\Phi_1} \check{f_2}  \right\|_2 
\left\| \Id_{m, \ell}\check{f_3}\right\|_2\,,
\end{equation}
(\ref{estB1jmnl122}) follows from (\ref{estLajmnkl2}) and (\ref{estLajmnkl1}).
An interpolation of (\ref{estB1jmnl1}) and (\ref{estB1jmnl122}) then yields
\begin{equation}\label{estLajmn(1)}
  \left|\Lambda_{j,m, n}^{(1)}(f_1, f_2, f_3) \right|\leq 
C 2^{-\frac{(d-1)j}{2}-\frac{m}{2}} 2^{\frac{-(d-1)j+m}{2}}
\prod_{i=1}^3\|f_i\|_{L^2(\bI_i)}\,.
\end{equation}

We now turn to prove that
\begin{equation}\label{estLajmn(2)}
  \left|\Lambda_{j,m, n}^{(2)}(f_1, f_2, f_3) \right|\leq 
C_N 2^{-\frac{(d-1)j}{2}-\frac{m}{2}} \max\left\{2^{-100m}, \sigma \right\}
\prod_{i=1}^3\|f_i\|_{L^2(\bI_i)}\,.
\end{equation}
In dual frequency variables, $\Lambda_{j,m, n}^{(2)}(f_1, f_2, f_3) $
can be expressed as 
$$
\sum_{\ell_0=-10}^{10}\!\!\sum_{\ell}
2^{-\frac{(d-1)j}{2}}\!\!\!\iint f_1(\xi)\wh{\Phi_1}(\xi) e^{2\pi i 2^{-(d-1)j}\alpha_{m, \ell}\xi}
  \wh{F_{2,m, \ell_0, \ell}}(\eta)
 {\mathfrak{ m}}(\xi, \eta) \wh{F_{3,m,n, \ell}}(\eta)d\xi d\eta\,,
$$
where
\begin{equation}\label{defoffrakm}
 \mathfrak{m}(\xi, \eta) = \int \rho(t) e^{-2\pi i (2^m\xi t+ 2^m\eta t^d)}dt \,
\end{equation}
$$
 F_{2,m, \ell_0, \ell}=\Id_{m, \ell+\ell_0} R_{\Phi_1}\check{f_2} \,\,\, 
{\rm and}\,\,\,  F_{3,m, n, \ell} = \Id^*_{(d-1)j+m, n} \Id_{m, \ell} \check{f_3}\,.
$$
If $\eta$ is not in a small neighborhood of $\wh{\Phi_1}$, then there is 
no critical point  of the phase  function $\phi_{\xi, \eta}(t)=\xi t+ \eta t^d$
occurring in a small neighborhood of $\operatorname{supp}\rho $.
Integration by parts gives a rapid decay $ O(2^{-Nm})$ for $\mathfrak{m}  $.
Thus in this case, we dominate $\left|\Lambda_{j,m, n}^{(2)}(f_1, f_2, f_3) \right|  $ 
by
\begin{equation}\label{nocriest}
  C_N 2^{-N m} \prod_{i=1}^3\|f_i\|_{L^2(\bI_i)}\,,
\end{equation}
for any positive integer $N$.
We now only need to consider the worst case when there is a critical point 
of the phse function $\phi_{\xi, \eta}(t)=\xi t+ \eta t^d$ 
in a small neighborhood of $\operatorname{supp}\rho$. In this case, 
 $\eta$ must be in a small neighborhood of $\wh{\Phi_1}$ and 
the stationary phase
method gives
\begin{equation}\label{frakm111}
 \mathfrak{m}(\xi, \eta)\sim  2^{-m/2} e^{2\pi i c_d 2^m \eta^{-\frac{1}{d-1}} \xi^{d/(d-1)} } \,,
\end{equation}
where $c_d$ is a constant depending on $d$ only. Thus the principle term of 
$ \Lambda_{j,m, n}^{(2)}(f_1, f_2, f_3)$ is
$$
\sum_{\ell_0=-10}^{10}\!\!\!\sum_{\ell}
2^{-\frac{(d-1)j}{2}-\frac{m}{2}} \!
\iint f_1(\xi)\wh{\Phi_1}(\xi) e^{ i \phi_{d,m, \eta}(\xi)}
  \wh{F_{2,m, \ell_0, \ell}}(\eta)\wh{\Phi_2}(\eta)\wh{F_{3,m,n, \ell}}(\eta)d\xi d\eta
\,,
$$
where $\wh{\Phi_2}$ is a Schwartz function supported on a small neighborhood of 
$\wh{\Phi_1}$, and 
$$
\phi_{ d, m, \eta}(\xi)= 2\pi 
 c_d 2^{m} \eta^{-\frac{1}{d-1}} \xi^{d/(d-1)} + 2\pi 2^{-(d-1)j}\alpha_{m, \ell}\xi\,.
$$
The key point is that the integral in the previous expression can be viewed as 
an inner product of $F_{3,m,n, \ell}$ and $ \mathcal{M}F_{2, m, \ell_0, \ell}$,
where $\mathcal {M}$ is a multiplier operator defined by
$$
 \wh{\mathcal{M}f}(\eta) =  {\mathfrak m}_{d,j,m}(\eta) \wh{f}(\eta) \,. 
$$
Here the multiplier ${\mathfrak {m}}_{d,j,m}$ is given by
\begin{equation}\label{defofmultiplier}
{\mathfrak {m}}_{d,j,m}(\eta) = \int f_1(\xi)\wh{\Phi_1}(\xi)e^{i\phi_{d,m,\eta}(\xi)} d\xi\,.
\end{equation}
Observe that $\phi_{d,m,\eta}(\xi) +  b\xi $ is in $\mQ_1$ for any $b\in\mathbb R$ and 
$\eta\in \operatorname{supp}\wh{\Phi_2}$. Thus $\sigma$-uniformity in $\mQ_1$ of $f_1$ 
yields 
\begin{equation}\label{estofmultiplier}
\left\|{\mathfrak {m}}_{d,j,m} \right\|_\infty\leq C\sigma \left\|f_1\right\|_{L^2(\bI_1)}\,.
\end{equation}
And henceforth we dominate $\Lambda_{j,m, n}^{(2)}(f_1, f_2, f_3)$ by
$$
 \sum_{\ell_0=-10}^{10}\!\!\!\sum_{\ell}
2^{-\frac{(d-1)j}{2}-\frac{m}{2}} \sigma  \left\|f_1\right\|_{L^2(\bI_1)}
\left\|F_{2, m, \ell_0, \ell}\right\|_2
  \left\|F_{3, m, n, \ell}\right\|_2\,,
$$
which clearly is bounded by
\begin{equation}\label{final222La2jmn}
2^{-\frac{(d-1)j}{2}-\frac{m}{2}} \sigma \prod_{i=1}^3\left\|f_i\right\|_{L^2(\bI_i)}\,.
\end{equation}
Now (\ref{estLajmn(2)}) follows from 
(\ref{nocriest}) and (\ref{final222La2jmn}).
Combining (\ref{estLajmn(1)}) and  (\ref{estLajmn(2)}), we finish the proof.
\end{proof}

\begin{corollary}\label{cor1}
Let $ \Lambda_{j,m, n}(f_1, f_2, f_3)  $ be defined as in (\ref{defofLajmn00}).
Then 
 there exists a constant $C$ independent of $j, m, n$ such that
\begin{equation}\label{f1unifest221}
\left |  \Lambda_{j,m, n}(f_1, f_2, f_3) \right|\leq
 C \max\left\{ 2^{-100m}, 2^{\frac{-(d-1)j+m}{2}}, \sigma  \right\} 
\|f_1\|_{L^2(\bI_1)} \|f_2\|_{ L^2(\bI_1)} \|\wh{f_3}\|_\infty\,,
\end{equation} 
holds for all $f_1\in L^2(\bI_1)$ which are $\sigma$-uniform in $\mQ_1$,  
$f_2\in L^2(\bI_2)$ and $\wh {f_3} \in L^\infty$.
\end{corollary}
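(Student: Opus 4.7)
The plan is to deduce Corollary \ref{cor1} directly from Proposition \ref{prop1} by exploiting the spatial localization built into $\mathcal{B}_{j,m,n}$ via the factor $\Id^*_{(d-1)j+m,n}(x)$, which confines the operator to an interval of length $\sim 2^{(d-1)j+m}$. The crucial observation is that, since $f_3$ is supported in the bounded interval $\bI_3$, the inverse Fourier transform $\check{f_3}$ is uniformly bounded by $C\|\wh{f_3}\|_\infty$ (up to a harmless reflection, $\wh{f_3}$ and $\check{f_3}$ agree in $L^\infty$). Hence the localized $L^2$-norm satisfies
\begin{equation*}
\|\Id^*_{(d-1)j+m,n}\,\check{f_3}\|_2 \leq C\, 2^{((d-1)j+m)/2}\,\|\wh{f_3}\|_\infty.
\end{equation*}

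Next I would retrace the proof of Proposition \ref{prop1} and observe that the norm $\|f_3\|_{L^2(\bI_3)}$ enters, in both the estimate of $\Lambda^{(1)}_{j,m,n}$ and that of $\Lambda^{(2)}_{j,m,n}$, only at the very end, through Cauchy--Schwarz summations in $\ell$ of the form $\sum_\ell \|F_{2,m,\ell_0,\ell}\|_2\,\|F_{3,m,n,\ell}\|_2$, where $F_{3,m,n,\ell} = \Id^*_{(d-1)j+m,n}\Id_{m,\ell}\check{f_3}$. Splitting Cauchy--Schwarz in $\ell$ yields
\begin{equation*}
\sum_\ell \|F_{2,m,\ell_0,\ell}\|_2\,\|F_{3,m,n,\ell}\|_2 \leq \Bigl(\sum_\ell \|F_{2,m,\ell_0,\ell}\|_2^2\Bigr)^{1/2}\Bigl(\sum_\ell \|F_{3,m,n,\ell}\|_2^2\Bigr)^{1/2},
\end{equation*}
and by orthogonality of the intervals $\bI_{m,\ell}$ the second factor equals $\|\Id^*_{(d-1)j+m,n}\check{f_3}\|_2$. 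I would now bound this by $C\,2^{((d-1)j+m)/2}\,\|\wh{f_3}\|_\infty$ instead of by $\|\check{f_3}\|_2=\|f_3\|_2$.

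This single substitution multiplies every bound obtained in the proof of Proposition \ref{prop1} by the extra factor $2^{((d-1)j+m)/2}$, which precisely cancels the prefactor $2^{-(d-1)j/2-m/2}$ appearing in the conclusion of that proposition, while the quantity $\max\{2^{-100m},\,2^{(-(d-1)j+m)/2},\,\sigma\}$ inside the maximum is left untouched. The result is exactly the bound claimed in Corollary \ref{cor1}. I do not foresee any serious obstacle: the argument amounts to a bookkeeping refinement of Proposition \ref{prop1}, driven solely by the bounded-frequency support of $f_3$ and the physical-space localization already present in $\mathcal{B}_{j,m,n}$.
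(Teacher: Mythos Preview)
Your proposal is correct and is essentially identical to the paper's own proof: the paper likewise observes that, because of the smooth cutoff $\Id^*_{(d-1)j+m,n}$, the factor $\|f_3\|_{L^2(\bI_3)}$ in Proposition \ref{prop1} can be sharpened to $\|\Id^{**}_{(d-1)j+m,n}\check{f_3}\|_2$, and then bounds this by $C\,2^{((d-1)j+m)/2}\|\wh{f_3}\|_\infty$, cancelling the prefactor $2^{-(d-1)j/2-m/2}$. The only cosmetic difference is that the paper writes $\Id^{**}$ rather than $\Id^*$ to absorb tails, which does not affect the argument.
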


\begin{proof}
Since there is a smooth restriction factor $ \Id^*_{(d-1)j+m,n} $ in 
the definition of $\mathcal {B}_{j,m,m}$, the right hand side of (\ref{f1unifest}) 
can be sharpen to 
\begin{equation}\label{refinef1unifest}
 C 2^{-\frac{(d-1)j}{2}-\frac{m}{2}} \max\left\{2^{-100m}, 2^{\frac{-(d-1)j+m}{2}}, \sigma  \right\} 
\|f_1\|_{L^2(\bI_1)} \|f_2\|_{L^2(\bI_2)} \left\|\Id^{**}_{(d-1)j+m,n} \check{f_3}\right\|_{2}\,,
\end{equation}
which is clearly bounded by
$$
C \max\left\{ 2^{-100m}, 2^{\frac{-(d-1)j+m}{2}}, \sigma  \right\} 
\|f_1\|_{L^2(\bI_1)} \|f_2\|_{ L^2(\bI_1)} \|\wh{f_3}\|_\infty\,.
$$
\end{proof}

\begin{proposition}\label{prop2}
Let $ \Lambda_{j,m, n}(f_1, f_2, f_3)  $ be defined as in (\ref{defofLajmn00}).
Then there exists a constant $C$ independent of $j, m, n$ such that
\begin{equation}\label{q1unifest}
\left |  \Lambda_{j,m, n}(e^{iq_1}, f_2, f_3) \right|\leq
 C  2^{-\frac{\mathfrak{D}(d-1) m}{2}} \|f_2\|_{L^2(\bI_2)} \| \wh {f_3}\|_\infty \,,
\end{equation} 
holds for all $q_1\in\mQ_1$, $f_2\in L^2(\bI_2)$ and $\wh {f_3} \in L^\infty$,
where $\mathfrak{D}(d-1)$ is a positive constant defined in (\ref{defofmathfrakd1}).
\end{proposition}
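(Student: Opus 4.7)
Here is the plan. I start by substituting $f_1 = e^{iq_1}$ into the definition of $\Lambda_{j,m,n}$ and proceeding exactly as in the proof of Proposition \ref{prop1}: applying the stationary phase asymptotic (\ref{frakm111}) to the multiplier $\mathfrak{m}$, I isolate the principal contribution
\begin{equation*}
 2^{-\frac{(d-1)j}{2}-\frac{m}{2}}\sum_{\ell_0,\ell}\iint e^{i\Psi(\xi,\eta)}\wh{\Phi_1}(\xi)\wh{\Phi_2}(\eta)\wh{F_{2,m,\ell_0,\ell}}(\eta)\wh{F_{3,m,n,\ell}}(\eta)\,d\xi d\eta
\end{equation*}
modulo rapidly decaying errors. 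Writing $q_1(\xi)=a\xi^{d/(d-1)}+b\xi$ with $2^{m-100}\leq|a|\leq 2^{m+100}$, the combined $\xi$-phase takes the form $\Psi(\xi,\eta)=\mu(\eta)\xi^{d/(d-1)}+\nu\xi$ where $\mu(\eta):=a+2\pi c_d\,2^m\eta^{-1/(d-1)}$ ranges up to size $\sim 2^m$ but vanishes at a single resonant point $\eta^*(a)$ inside $\operatorname{supp}\wh{\Phi_2}$. Two structural features drive the analysis: the mixed derivative $\partial_\xi\partial_\eta\Psi\sim 2^m$ is of uniform size \emph{independently of $q_1$}, providing a baseline $2^{-m/2}$ decay via H\"ormander's non-degenerate-phase theorem, whereas $\partial_\xi^2\Psi\sim\mu(\eta)$ degenerates near $\eta^*(a)$ and furnishes an \emph{additional} gain to be extracted by localising in $\eta$.

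Next I will partition the $\eta$-integration using a small parameter $\delta>0$ to be optimised at the end, letting $E_{\mathrm{good}}=\{\eta:|\mu(\eta)|\geq 2^{(1-\delta)m}\}$ and $E_{\mathrm{bad}}$ its complement; since $|\mu'|\sim 2^m$ on $\operatorname{supp}\wh{\Phi_2}$, one has $|E_{\mathrm{bad}}|\lesssim 2^{-\delta m}$. On $E_{\mathrm{good}}$, van der Corput in $\xi$ alone (using $|\partial_\xi^2\Psi|\gtrsim 2^{(1-\delta)m}$) yields a pointwise bound $O(2^{-(1-\delta)m/2})$ for the inner $\xi$-integral; combining this with Cauchy--Schwarz in $\eta$, the trivial estimate $\|\wh{F_{2,m,\ell_0,\ell}}\|_2\leq\|f_2\|_2$, the key $L^2$-bound $\|\wh{F_{3,m,n,\ell}}\|_2\lesssim 2^{m/2}\|\wh{f_3}\|_\infty$ coming from the joint cutoff $\Id^*_{(d-1)j+m,n}\Id_{m,\ell}$, and orthogonal summation over the $\sim 2^{(d-1)j}$ active indices $\ell$ inside the support of $\Id^*_{(d-1)j+m,n}$, produces the required decay after the prefactors cancel.

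The main obstacle is the bad-region contribution, where $\partial_\xi^2\Psi$ is too small for van der Corput in $\xi$. There I will invoke H\"ormander's theorem directly on the bilinear oscillatory form (exploiting the uniform lower bound $|\partial_\xi\partial_\eta\Psi|\sim 2^m$, which holds on \emph{all} of $\operatorname{supp}\wh{\Phi_2}$, resonant or not), and combine the resulting $2^{-m/2}$ gain with $|E_{\mathrm{bad}}|^{1/2}\lesssim 2^{-\delta m/2}$ through a H\"older inequality in $\eta$; the same orthogonal summation in $\ell$ and cutoff-based $L^2$ bound on $\wh{F_{3,m,n,\ell}}$ then close the estimate. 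Optimising $\delta$ over the tradeoff between the good and bad contributions fixes the decay exponent $\mathfrak{D}(d-1)>0$. The most delicate point is to ensure that the bad-region bound genuinely consumes only the $L^\infty$ norm $\|\wh{f_3}\|_\infty$ (rather than the stronger $L^2$-norm that a naive Cauchy--Schwarz would request) while still retrieving the $2^{-\delta m/2}$ gain; this demands careful bookkeeping of the Jacobian of the substitution $\eta\mapsto\mu(\eta)$, of the scale separation between $2^m$ and $2^{(d-1)j+m}$, and of the precise pairing of $\wh{F_{2,m,\ell_0,\ell}}$ against $\wh{F_{3,m,n,\ell}}$ in the Cauchy--Schwarz step.
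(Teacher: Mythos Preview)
Your approach diverges from the paper's at the first step, and the bad-region estimate---which you correctly flag as the delicate point---does not close. The difficulty is this: to combine the H\"ormander gain $2^{-m/2}$ with the measure bound $|E_{\rm bad}|^{1/2}\lesssim 2^{-\delta m/2}$ you must control $\|\Id_{E_{\rm bad}}\wh{F_{2,m,\ell_0,\ell}}\,\wh{F_{3,m,n,\ell}}\|_2$, and extracting the factor $|E_{\rm bad}|^{1/2}$ forces at least one of $\wh{F_2},\wh{F_3}$ into $L^\infty(\eta)$. But $\|\wh{F_{3,m,n,\ell}}\|_\infty\le\|F_{3,m,n,\ell}\|_1\lesssim 2^{m}\|\wh{f_3}\|_\infty$ loses a full $2^{m/2}$ relative to the $L^2$ bound $\|\wh{F_{3,m,n,\ell}}\|_2\lesssim 2^{m/2}\|\wh{f_3}\|_\infty$ that you used in the good region, and $\|\wh{F_{2,m,\ell_0,\ell}}\|_\infty\le 2^{m/2}\|F_{2,m,\ell_0,\ell}\|_2$ costs another $2^{m/2}$. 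Tracking the powers through the prefactor $2^{-(d-1)j/2-m/2}$ and the $\ell$-summation, every variant of your bookkeeping yields at best an $O(1)$ bound on the bad piece, independent of $\delta$; optimising over $\delta$ therefore produces no decay at all. The resonance $\mu(\eta^*)=0$ is precisely the obstruction the paper isolates at the end of Section~\ref{SphaseTri}: the trilinear form $\Lambda^*_{j,m}$ enjoys no uniform $L^2$ decay, and your frequency-side scheme is a variant of that failed estimate.

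The paper's proof is structurally different and never passes through the symbol $\mathfrak m$. Instead it computes $R_{\Phi_1}\check{(e^{iq_1})}$ by stationary phase in the \emph{spatial} variable, obtaining an explicit oscillatory kernel $\mathcal P(q_1)(x)\sim 2^{-m/2}e^{ic_1a^{-(d-1)}(x+b)^d}$ on physical space. Substituting this into the $(x,t)$-integral defining $\mathcal B_{j,m,n}$, rescaling, and changing variables $t^d\mapsto t$ reduces matters to a bilinear form $\Lambda_{d,j,m,\bI}(f,g)$ with phase $C2^m(x-t^{1/d}+c)^d$. A $TT^*$ argument on this form produces the differenced phase $\bQ_{c,j,\tau}(u,v)$ of Lemma~\ref{mixdevarphi}, which satisfies the \emph{higher-order} lower bound $|\partial_u^{d-1}\partial_v\bQ_{c,j,\tau}|\gtrsim|\tau|$; the multidimensional van der Corput Lemma~\ref{vdcorput2d} then converts this into the decay $2^{-\mathfrak D(d-1)m/2}$. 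The essential point is that this $(d-1)$-st order mixed derivative never vanishes, whereas your second $\xi$-derivative $\partial_\xi^2\Psi\sim\mu(\eta)$ does---and that degeneracy is exactly what your good/bad splitting was trying, unsuccessfully, to circumvent.
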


A proof of Proposition \ref{prop2} will be provided in Section \ref{proofprop2}.

\section{Estimates of the trilinear forms, Case $j<0$}\label{triest000neg}
\setcounter{equation}0

Let $ \mQ_2 $ be a set of some functions defined by
\begin{equation}\label{defofmQ2neg}
 \mQ_2= \left\{ a \eta^{-\frac{1}{d-1}} + b \eta :  2^{m-100} \leq |a|\leq
 2^{m+100}  \,\,{\rm and}\,\, a, b\in\ZR
     \right\}\,.
\end{equation}

\begin{proposition}\label{prop1neg}
Let $f_2$ be  $\sigma$-uniform in $\mQ_2$. 
And let $j\leq 0$ and  $ \Lambda_{j,m, n}(f_1, f_2, f_3)  $ be defined as in (\ref{defofLajmn00}).
Then 
 there exists a constant $C$ independent of $j, m, n,  f_1$ such that
\begin{equation}\label{f1unifestneg}
\left |  \Lambda_{j,m, n}(f_1, f_2, f_3) \right|\leq
 C 2^{\frac{(d-1)j}{2}-\frac{m}{2}} \max\left\{2^{-100m},  2^{\frac{(d-1)j+m}{2}}, \sigma  \right\} 
\prod_{i=1}^3\|f_i\|_{L^2(\bI_i)}\,,
\end{equation} 
holds for all $f_1\in L^2(\bI_2)$ and $f_3\in L^2(\bI_3)$. 
\end{proposition}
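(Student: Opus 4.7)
The plan is to run the proof of Proposition \ref{prop1} essentially verbatim, with the roles of $f_1$ and $f_2$ (equivalently $\xi$ and $\eta$) interchanged. This reflects the fact that in (\ref{defofbfrjneg}), for $j\leq 0$, the contracting factor $2^{(d-1)j}$ multiplies the argument of $\check f_2$ rather than that of $\check f_1$; the slowly-varying factor is now $R_{\Phi_1}\check f_2(2^{(d-1)j}x - 2^m t^d)$, and it is $f_2$ that should be handled by uniformity.

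First I would localize in $x$ via $\sum_\ell \Id_{m,\ell}(x)=1$ and freeze the slowly-varying argument: writing
\[
R_{\Phi_1}\check f_2(2^{(d-1)j}x - 2^m t^d)= G_{j,m,\ell}(x,t) + R_{\Phi_1}\check f_2(2^{(d-1)j}\alpha_{m,\ell} - 2^m t^d)
\]
for $\alpha_{m,\ell}\in\bI_{m,\ell}$, where $G_{j,m,\ell}$ is the difference, splits $\Lambda_{j,m,n}=\Lambda^{(1)}_{j,m,n}+\Lambda^{(2)}_{j,m,n}$. For $\Lambda^{(1)}$ the mean value theorem and smoothness of $\Phi_1$ yield the pointwise bound $|G_{j,m,\ell}(x,t)|\leq C\|\check f_2\|_\infty\cdot 2^{(d-1)j+m}$ on $\bI_{m,\ell}$, which, combined with the $L^1$-type estimate arising from the change of variables $(u,v)=(x-2^m t,\,2^{(d-1)j}x-2^m t^d)$ (whose Jacobian is $\sim 2^m$) and an interpolation, produces
\[
|\Lambda^{(1)}_{j,m,n}(f_1,f_2,f_3)|\leq C\,2^{(d-1)j/2-m/2}\cdot 2^{((d-1)j+m)/2}\prod_{i=1}^3\|f_i\|_{L^2(\bI_i)},
\]
matching the required second entry of the max in (\ref{f1unifestneg}).

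For $\Lambda^{(2)}$, passing to dual frequency variables yields an expression of the form
\[
\sum_\ell 2^{(d-1)j/2}\!\iint f_2(\eta)\wh{\Phi_1}(\eta)\,e^{2\pi i 2^{(d-1)j}\alpha_{m,\ell}\eta}\,\wh{F_{1,m,\ell_0,\ell}}(\xi)\,\mathfrak{m}(\xi,\eta)\,\wh{F_{3,m,n,\ell}}(\xi)\,d\xi\,d\eta,
\]
with $\mathfrak{m}$ as in (\ref{defoffrakm}) and $F_{1,m,\ell_0,\ell}=\Id_{m,\ell+\ell_0}R_{\Phi_1}\check f_1$. Outside a neighborhood of the stationary region of $\xi t+\eta t^d$ on $\operatorname{supp}\rho$, integration by parts gives $|\mathfrak{m}|=O(2^{-Nm})$ and produces the $2^{-100m}$ term. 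Inside, (\ref{frakm111}) gives $\mathfrak{m}(\xi,\eta)\sim 2^{-m/2}\exp\bigl(2\pi i c_d 2^m\xi^{d/(d-1)}\eta^{-1/(d-1)}\bigr)$, and the principal contribution reorganizes into a $\xi$-multiplier whose symbol
\[
\mathfrak{m}^\flat_{d,j,m,\ell}(\xi)=\int f_2(\eta)\wh{\Phi_2}(\eta)\,e^{i\phi^\sharp_{d,m,\xi}(\eta)}\,d\eta,\qquad \phi^\sharp_{d,m,\xi}(\eta)= 2\pi c_d 2^m\xi^{d/(d-1)}\eta^{-1/(d-1)}+2\pi 2^{(d-1)j}\alpha_{m,\ell}\eta,
\]
pairs $f_2$ against a phase. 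Since $\xi\in\operatorname{supp}\wh{\Phi_1}$ forces $|2\pi c_d 2^m\xi^{d/(d-1)}|\in[2^{m-100},2^{m+100}]$, the phase $\phi^\sharp_{d,m,\xi}$ lies in $\mQ_2$. The $\sigma$-uniformity of $f_2$ in $\mQ_2$ then gives $\|\mathfrak{m}^\flat_{d,j,m,\ell}\|_\infty\leq C\sigma\|f_2\|_{L^2(\bI_2)}$, and Cauchy--Schwarz delivers the $\sigma$-contribution $C\,2^{(d-1)j/2-m/2}\sigma\prod_i\|f_i\|_{L^2(\bI_i)}$.

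The main technical obstacle, exactly as in the proof of Proposition \ref{prop1}, is making the stationary-phase reduction of $\mathfrak{m}$ rigorous uniformly in the local parameter $\alpha_{m,\ell}$ so that the non-principal Schwartz error in the asymptotic expansion (\ref{frakm111}) does not interfere with the uniformity step. This is identical in nature to the treatment in Section \ref{triest000} and carries over mutatis mutandis, combining the three contributions to yield (\ref{f1unifestneg}).
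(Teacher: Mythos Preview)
Your proposal is correct and follows essentially the same approach as the paper's own proof: the same localization $\sum_\ell \Id_{m,\ell}=1$, the same freezing of the slowly varying factor $R_{\Phi_1}\check f_2(2^{(d-1)j}\alpha_{m,\ell}-2^m t^d)$, the same split $\Lambda^{(1)}+\Lambda^{(2)}$, the same two endpoint bounds for $\Lambda^{(1)}$ (the $\|\check f_2\|_\infty$-bound via mean value and the $\|\check f_2\|_1$-bound via the change of variables $(u,v)=(x-2^mt,\,2^{(d-1)j}x-2^mt^d)$) followed by interpolation, and the same stationary-phase reduction of $\Lambda^{(2)}$ to a $\xi$-multiplier whose symbol is exactly an inner product of $f_2$ against a phase in $\mQ_2$. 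One cosmetic slip: in your multiplier $\mathfrak{m}^\flat_{d,j,m,\ell}$ the cutoff on $\eta$ should be $\wh{\Phi_1}(\eta)$ (the $\wh{\Phi_2}$ appears on the $\xi$ side as the neighborhood cutoff), but this does not affect the argument.
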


\begin{proof}
Let $\Id_{m,l}=\Id_{\bI_{m,l}}$ and let ${\mathcal B}_{j,m,n, \ell}$ be a bilinear operator defined by
$$
 {\mathcal B}_{j,m,n, \ell}(f,g)(x) = {\mathcal B}_{j,m,n}(f,g)(x) \Id_{m,\ell}(x)\,,
$$
for all $f, g$.
Decompose $\Lambda_{j,m, n}(f_1, f_2, f_3)$ into
$\sum_{\ell} \Lambda_{j,m,n, \ell}$,
where 
$$ 
\Lambda_{j,m,n, \ell}(f_1, f_2, f_3) = 
\left\langle  {\mathcal B}_{j,m,n, \ell}(\check {f_1}, \check{f_2}), 
  \check {f_3} \right\rangle \,.
$$
Let $\alpha_{m, \ell}$ be a fixed point in the interval $\bI_{m, \ell}$.
And set $G_{\Phi_1,j,m,\ell}(x, t)$ to be 
$$
 G_{\Phi_1,j,m,\ell}(x, t):=  R_{\Phi_1} \check {f_2} (2^{(d-1)j} x - 2^m t^d)
     - R_{\Phi_1} \check{f_2}(2^{(d-1)j}\alpha_{m, \ell} - 2^m t^d) 
$$

Split ${\mathcal B}_{j,m,n, \ell}(\check{f_1}, \check{f_2})$ into two terms:
$$
 {\mathcal B}_{j,m,n, \ell}^{(1)}\left(\check{f_1}, \check{f_2}\right) + 
{\mathcal B}_{j,m,n, \ell}^{(2)} \left(\check{f_1}, \check{f_2}\right)\,,
$$
where ${\mathcal B}_{j,m,n, \ell}^{(1)}\left(\check{f_1}, \check{f_2}\right)$ is equal to
$$
2^{(d-1)j/2}\!\!
\int_{\mathbb R} R_{\Phi_1}\check{f_1} \left(x-2^m t  \right)
G_{\Phi_1,j,m,\ell}(x, t)  \rho(t) dt
 \left( \Id^*_{(d-1)|j|+m, n}(x)  \Id_{m, \ell}(x) \right)
$$
and ${\mathcal B}_{j,m,n, \ell}^{(2)} \left(\check{f_1}, \check{f_2}\right)$ equals to
$$
2^{-(d-1)j/2}\!\!
\int_{\mathbb R} \!R_{\Phi_1}\check{f_1} \left(x-2^m t  \right)
 R_{\Phi_1} {\check{f_2}}(2^{(d-1)j}\alpha_{m, \ell} - 2^m t^d) 
  \rho(t) dt
 \left( \Id^*_{(d-1)|j|+m, n}(x) \Id_{m, \ell}(x)\right)\,.
$$
For $i=1, 2$, let $ \Lambda_{j,m, n}^{(i)}(f_1, f_2, f_3)$ denote
$$
\sum_\ell  \left\langle {\mathcal B}_{j,m,n, \ell}^{(i)}\left(\check{f_1}, \check{f_2}\right), 
  \check{f_3} \right\rangle  \,.
$$
We now start to prove that 
\begin{equation}\label{estB1jmnl1neg}
\left|\Lambda_{j,m, n}^{(1)}(f_1, f_2, f_3) \right|
\leq  2^{\frac{(d-1)j}{2} } 2^{(d-1)j+m} \left\| \check{f_1}\right\|_2
           \left\|\check{f_2}\right\|_\infty \left\|\check{f_3}\right\|_2\,.
\end{equation} 
The mean value theorem and the smoothness of $\Phi_1$ yield 
that for $x\in\bI_{m,\ell}$, 
\begin{equation}\label{estdiffFjml1neg}
 \left| G_{\Phi_1,j,m,\ell}(x, t) \right|\leq C\left\| \check{f_2}\right\|_\infty
  2^{(d-1)j} \left| x-\alpha_{m, \ell}\right| \leq  C2^{(d-1)j+m}\left \| \check{f_2}\right\|_\infty \,.              
\end{equation} 
Because $|t|\sim 1$ when $t\in\operatorname{supp}\rho$, 
${\mathcal B}_{j,m,n, \ell}^{(1)}\left(\check{f_1}, \check{f_2}\right)$
can be written as
\begin{equation}\label{sumlk1111neg}
2^{\frac{(d-1)j}{2}}\!\!
\int_{\mathbb R} \!G_{\Phi_1,j,m,\ell}(x, t)
 \sum_{\ell_0}\left(\Id_{m, \ell + \ell_0 } 
R_{\Phi_1} \check{f_1}\right) (x - 2^{m} t ) \rho(t) dt
 \left( \Id^*_{(d-1)|j|+m, n}(x)  \Id_{m, \ell}(x) \right)\,,
\end{equation}
where $\ell_0$ is an integer between $-10$ and $10$.
Putting absolute value throughout and applying (\ref{estdiffFjml1neg})
plus 
 Cauchy-Schwarz inequality, we then estimate 
$ \left|\Lambda_{j,m, n}^{(1)}(f_1, f_2, f_3) \right|$ by
$$
 C2^{\frac{(d-1)j}{2}}2^{(d-1)j+m}\left\| \check{f_2}\right\|_\infty 
\sum_{\ell_0=-10}^{10} 
\sum_\ell\left\|\Id_{m, \ell + \ell_0 } 
R_{\Phi_1} \check{f_1}  \right\|_2 
\left\| \Id_{m, \ell}\check{f_3}\right\|_2\,,
$$
which clearly gives (\ref{estB1jmnl1neg}) by one more use of Cauchy-Schwarz inequality.\\

We now prove that 
\begin{equation}\label{estB1jmnl122neg}
\left|\Lambda_{j,m, n}^{(1)}(f_1, f_2, f_3) \right|
\leq  2^{\frac{(d-1)j}{2} } 2^{-m} \left\| \check{f_1}\right\|_2
           \left\|\check{f_2}\right\|_1 \left\|\check{f_3}\right\|_2\,.
\end{equation} 
From (\ref{sumlk1111neg}), we get that $\Lambda_{j,m, n}^{(1)}(f_1, f_2, f_3)$ equals to
$$
2^{\frac{(d-1)j}{2}}\sum_{\ell_0=-10}^{10}
\sum_\ell \Lambda_{j,m, n,\ell_0, \ell,1}(f_1, f_2, f_3) -  \Lambda_{j,m, n,\ell_0, \ell,2}(f_1, f_2, f_3)\,,
$$
where $\Lambda_{j,m, n,\ell_0, \ell,1}(f_1, f_2, f_3)$ is equal to
$$
\int_{{\mathbb R}^2} \!\! 
\!\left(\Id_{m, \ell + \ell_0 } 
R_{\Phi_1} \check{f_1}\right) \!(x \!- \!2^{m} t )
 R_{\Phi_1} \check {f_2} \left( 2^{(d-1)j} x \!- \!2^m t^d \right)\rho(t)\! 
 \left( \Id^*_{(d-1)|j|+m, n} \Id_{m, \ell} \check{f_3} \right)\!(x)  dt dx
$$
and $\Lambda_{j,m, n,\ell_0, \ell,2}(f_1, f_2, f_3)$  equals to
$$
\int_{{\mathbb R}^2} \!\! \!
\left(\Id_{m, \ell + \ell_0} 
R_{\Phi_1} \check{f_1}\right) \!\left(x - 2^{m} t\right) \!
R_{\Phi_1} \check {f_2} \left(2^{(d-1)j} \alpha_{m, \ell} - \!2^m t^d\right)\!\!
\rho(t)\! 
 \left( \Id^*_{(d-1)|j|+m, n} \Id_{m, \ell} \check{f_3} \right)\!(x) dt dx .
$$
 Cauchy-Schwarz inequality 
yields that 
\begin{equation}\label{estLajmnkl2neg}
\left|\Lambda_{j,m, n,\ell_0, \ell,2}(f_1, f_2, f_3)\right| \leq 
C2^{-m}
    \left\|\Id_{m, \ell + \ell_0 } 
R_{\Phi_1} \check{f_1}  \right\|_2  \left\| \check{f_2}\right\|_1
\left\| \Id_{m, \ell}\check{f_3}\right\|_2\,.
\end{equation}
In order to obtain a similar estimate for $\Lambda_{j,m, n,\ell_0, \ell,1}(f_1, f_2, f_3) $,
we change variables by $u= x-2^m t $ and $v=2^{(d-1)j} x-2^m t^d$ to express 
$ \Lambda_{j,m, n,\ell_0, \ell,1}(f_1, f_2, f_3) $ as
$$
\int\!\!\!\int 
\left(\Id_{m, \ell + \ell_0 } 
R_{\Phi_1} \check{f_1}\right) (u) R_{\Phi_1} \check {f_2}(v) \rho (t(u,v)) 
 \left( \Id^*_{(d-1)|j|+m, n} \Id_{m, \ell} \check{f_3} \right)\!(x(u,v)) 
 \frac {du dv}{J(u,v)} \,,
$$
where  $J(u,v)$ is the Jocobian $ \frac{\partial (u, v)}{\partial(x,t)}$.
It is easy to see that the Jocobian $ \frac{\partial (u, v)}{\partial(x,t)}\sim 2^m$.
As we did for $ \Lambda_{j,m, n,\ell_0, \ell,1}$, we dominate the previous integral by 
$$
C2^{-m}
\int\! \left\|\Id_{m, \ell + \ell_0 } 
R_{\Phi_1} \check{f_1}  \right\|_2 \left|R_{\Phi_1} \check {f_2}(v) \right|
 \!\left( \int \left| \left(\Id_{m, \ell} \check{f_3} \right)\!(x(u,v))
 \rho(t(u,v)) \right|^2      du     \right)^{\frac{1}{2}} \!\!dv .
$$
Notice that $|\partial x/\partial u| \sim 1$ whenever $t\in{\operatorname{supp}}\rho$. 
We then estimate 
 \begin{equation}\label{estLajmnkl1neg}
\left|\Lambda_{j,m, n,\ell_0, \ell,1}(f_1, f_2, f_3)\right| \leq 
C2^{-m}
    \left\|\Id_{m, \ell + \ell_0 } 
R_{\Phi_1} \check{f_1}  \right\|_2 \left\| \check{f_2}\right\|_1
\left\| \Id_{m, \ell}\check{f_3}\right\|_2\,,
\end{equation}
(\ref{estB1jmnl122neg}) follows from (\ref{estLajmnkl2neg}) and (\ref{estLajmnkl1neg}).
An interpolation of (\ref{estB1jmnl1neg}) and (\ref{estB1jmnl122neg}) then yields
\begin{equation}\label{estLajmn(1)neg}
  \left|\Lambda_{j,m, n}^{(1)}(f_1, f_2, f_3) \right|\leq 
C 2^{\frac{(d-1)j}{2}-\frac{m}{2}} 2^{\frac{(d-1)j+m}{2}}
\prod_{i=1}^3\|f_i\|_{L^2(\bI_i)}\,.
\end{equation}

We now turn to prove that if $f_2$ is $\sigma$-uniform in $\mQ_2$, then  
\begin{equation}\label{estLajmn(2)neg}
  \left|\Lambda_{j,m, n}^{(2)}(f_1, f_2, f_3) \right|\leq 
C_N 2^{\frac{(d-1)j}{2}-\frac{m}{2}} \max\left\{2^{-100m}, \sigma \right\}
\prod_{i=1}^3\|f_i\|_{L^2(\bI_i)}\,.
\end{equation}
In dual frequency variables, $\Lambda_{j,m, n}^{(2)}(f_1, f_2, f_3) $
can be expressed as 
$$
\sum_{\ell_0=-10}^{10}\!\!\sum_{\ell}
2^{\frac{(d-1)j}{2}}\!\!\!\iint 
 \wh{F_{1,m, \ell_0, \ell}}(\xi)
f_2(\eta)\wh{\Phi_1}(\eta) e^{2\pi i 2^{(d-1)j}\alpha_{m, \ell}\eta}
  {\mathfrak{ m}}(\xi, \eta) \wh{F_{3,m,n, \ell}}(\xi)d\xi d\eta\,,
$$
where
\begin{equation}\label{defoffrakmneg}
 \mathfrak{m}(\xi, \eta) = \int \rho(t) e^{-2\pi i (2^m\xi t+ 2^m\eta t^d)}dt \,
\end{equation}
$$
 F_{1,m, \ell_0, \ell}=\Id_{m, \ell+\ell_0} R_{\Phi_1}\check{f_1} \,\,\, 
{\rm and}\,\,\,  F_{3,m, n, \ell} = \Id^*_{(d-1)|j|+m, n} \Id_{m, \ell} \check{f_3}\,.
$$
If $\xi$ is not in a small neighborhood of $\wh{\Phi_1}$, then there is 
no critical point  of the phse function $\phi_{\xi, \eta}(t)=\xi t+ \eta t^d$
occurring in a small neighborhood of $\operatorname{supp}\rho $.
Integration by parts gives a rapid decay $ O(2^{-Nm})$ for $\mathfrak{m}  $.
Thus in this case, we dominate $\left|\Lambda_{j,m, n}^{(2)}(f_1, f_2, f_3) \right|  $ 
by
\begin{equation}\label{nocriestneg}
  C_N 2^{-N m} \prod_{i=1}^3\|f_i\|_{L^2(\bI_i)}\,,
\end{equation}
for any positive integer $N$.
We now only need to consider the worst case when there is a critical point 
of the phse function $\phi_{\xi, \eta}(t)=\xi t+ \eta t^d$ 
in a small neighborhood of $\operatorname{supp}\rho$. In this case, 
 $\xi$ must be in a small neighborhood of $\wh{\Phi_1}$ and 
the stationary phase
method gives
\begin{equation}\label{frakm111neg}
 \mathfrak{m}(\xi, \eta)\sim  2^{-m/2} e^{2\pi i c_d 2^m \xi^{d/(d-1)}\eta^{-\frac{1}{d-1}}  } \,,
\end{equation}
where $c_d$ is a constant depending on $d$ only. Thus the principle term of 
$ \Lambda_{j,m, n}^{(2)}(f_1, f_2, f_3)$ is
$$
\sum_{\ell_0=-10}^{10}\!\!\!\sum_{\ell}
2^{\frac{(d-1)j}{2}-\frac{m}{2}} \!
\iint \wh{F_{1,m, \ell_0, \ell}}(\xi)\wh{\Phi_2}(\xi)
f_2(\eta)\wh{\Phi_1}(\eta) e^{ i \phi_{d,m, \xi}(\eta)}
  \wh{F_{3,m,n, \ell}}(\xi)d\xi d\eta
\,,
$$
where $\wh{\Phi_2}$ is a Schwartz function supported on a small neighborhood of 
$\wh{\Phi_1}$, and 
$$
\phi_{ d, m, \xi}(\eta)= 2\pi 
 c_d 2^{m} \xi^{d/(d-1)} \eta^{-\frac{1}{d-1}}  + 2\pi 2^{(d-1)j}\alpha_{m, \ell}\eta\,.
$$
The key point is that the integral in the previous expression can be viewed as 
an inner product of $F_{3,m,n, \ell}$ and $ \mathcal{M}F_{1, m, \ell_0, \ell}$,
where $\mathcal {M}$ is a multiplier operator defined by
$$
 \wh{\mathcal{M}f}(\xi) =  {\mathfrak m}_{d,j,m}(\xi) \wh{f}(\xi) \,. 
$$
Here the multiplier ${\mathfrak {m}}_{d,j,m}$ is given by
\begin{equation}\label{defofmultiplierneg}
{\mathfrak {m}}_{d,j,m}(\xi) = \int f_2(\eta)\wh{\Phi_1}(\eta)e^{i\phi_{d,m,\xi}(\eta)} d\eta\,.
\end{equation}
Observe that $\phi_{d,m,\xi}(\eta) +  b\eta$ is in $\mQ_2$ for any $b\in\mathbb R$ and 
$\xi\in \operatorname{supp}\wh{\Phi_2}$. Thus $\sigma$-uniformity in $\mQ_2$ of $f_2$ 
yields 
\begin{equation}\label{estofmultiplierneg}
\left\|{\mathfrak {m}}_{d,j,m} \right\|_\infty\leq C\sigma \left\|f_2\right\|_{L^2(\bI_2)}\,.
\end{equation}
And henceforth we dominate $\Lambda_{j,m, n}^{(2)}(f_1, f_2, f_3)$ by
$$
 \sum_{\ell_0=-10}^{10}\!\!\!\sum_{\ell}
2^{\frac{(d-1)j}{2}-\frac{m}{2}} \sigma  \left\|f_2\right\|_{L^2(\bI_2)}
\left\|F_{1, m, \ell_0, \ell}\right\|_2
  \left\|F_{3, m, n, \ell}\right\|_2\,,
$$
which clearly is bounded by
\begin{equation}\label{final222La2jmnneg}
2^{\frac{(d-1)j}{2}-\frac{m}{2}} \sigma \prod_{i=1}^3\left\|f_i\right\|_{L^2(\bI_i)}\,.
\end{equation}
Now (\ref{estLajmn(2)neg}) follows from 
(\ref{nocriestneg}) and (\ref{final222La2jmnneg}).
Combining (\ref{estLajmn(1)neg}) and  (\ref{estLajmn(2)neg}), we finish the proof.
\end{proof}

\begin{corollary}\label{cor1neg}
Let $j\leq 0$ and $ \Lambda_{j,m, n}(f_1, f_2, f_3)  $ be defined as in (\ref{defofLajmn00}).
Then 
 there exists a constant $C$ independent of $j, m, n$ such that
\begin{equation}\label{f1unifest221neg}
\left |  \Lambda_{j,m, n}(f_1, f_2, f_3) \right|\leq
 C \max\left\{ 2^{-100m}, 2^{\frac{(d-1)j+m}{2}}, \sigma  \right\} 
\|f_1\|_{L^2(\bI_1)} \|f_2\|_{ L^2(\bI_1)} \|\wh{f_3}\|_\infty\,,
\end{equation} 
holds for all $f_2\in L^2(\bI_2)$ which are $\sigma$-uniform in $\mQ_2$,  
$f_1\in L^2(\bI_1)$ and $\wh {f_3} \in L^\infty$.
\end{corollary}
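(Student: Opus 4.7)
The plan is to mirror the argument given for Corollary \ref{cor1}, using Proposition \ref{prop1neg} in place of Proposition \ref{prop1}. First, I would observe that the proof of Proposition \ref{prop1neg} in fact yields a slightly stronger statement: the factor $\|f_3\|_{L^2(\bI_3)}$ appearing in (\ref{f1unifestneg}) can be replaced by $\|\Id^{**}_{(d-1)|j|+m, n}\check{f_3}\|_2$. This is because the bilinear operator $\mathcal B_{j, m, n}$ in the definition (\ref{defofBjmnneg}) carries the smooth cutoff $\Id^*_{(d-1)|j|+m, n}$ in the $x$-variable, so when we pair against $\check{f_3}$, the localization $\Id^*_{(d-1)|j|+m, n}\check{f_3}$ (and hence its pointwise majorant $\Id^{**}_{(d-1)|j|+m, n}\check{f_3}$) is all that actually enters into each of the $L^2$-estimates in the proof of Proposition \ref{prop1neg}.

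Next, I would control $\|\Id^{**}_{(d-1)|j|+m, n}\check{f_3}\|_2$ in terms of $\|\wh{f_3}\|_\infty$. Since $f_3$ is supported in a bounded interval $\bI_3$, we have $\|\check{f_3}\|_\infty \leq \|\wh{f_3}\|_1 \leq |\bI_3|\,\|\wh{f_3}\|_\infty$. Moreover, a direct computation from the definition of $\Id^{**}_{k,n}$ yields $\|\Id^{**}_{k,n}\|_2 \leq C\,2^{k/2}$, so taking $k=(d-1)|j|+m$ gives
\begin{equation*}
\bigl\|\Id^{**}_{(d-1)|j|+m, n}\check{f_3}\bigr\|_2 \leq \|\Id^{**}_{(d-1)|j|+m, n}\|_2\,\|\check{f_3}\|_\infty \leq C\,2^{\frac{(d-1)|j|+m}{2}}\,\|\wh{f_3}\|_\infty .
\end{equation*}

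Substituting this into the refined version of Proposition \ref{prop1neg}, the prefactor becomes
\begin{equation*}
2^{\frac{(d-1)j}{2}-\frac{m}{2}} \cdot 2^{\frac{(d-1)|j|+m}{2}} = 1,
\end{equation*}
using $j \leq 0$, so $(d-1)j = -(d-1)|j|$. This precise cancellation is exactly what produces the clean bound stated in (\ref{f1unifest221neg}), with the remaining factor $\max\{2^{-100m},\,2^{\frac{(d-1)j+m}{2}},\,\sigma\}$ inherited unchanged from Proposition \ref{prop1neg}. Since this is a direct adaptation of the $j>0$ argument with appropriate sign adjustments, there is no genuine obstacle; the only point requiring care is verifying that the cutoff $\Id^*_{(d-1)|j|+m, n}$ can indeed be extracted at the appropriate step in the proof of Proposition \ref{prop1neg} (in both the $\Lambda^{(1)}$ and $\Lambda^{(2)}$ pieces), but this is automatic because that cutoff is built into the operator $\mathcal{B}_{j,m,n,\ell}$ from the outset.
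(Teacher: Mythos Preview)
Your proposal is correct and follows essentially the same route as the paper: refine Proposition~\ref{prop1neg} by replacing $\|f_3\|_{L^2(\bI_3)}$ with $\|\Id^{**}_{(d-1)|j|+m,n}\check{f_3}\|_2$ (using the built-in cutoff $\Id^*_{(d-1)|j|+m,n}$), then bound the latter by $C\,2^{((d-1)|j|+m)/2}\|\wh{f_3}\|_\infty$ so that the prefactor cancels exactly. Your write-up is in fact more explicit about the cancellation step than the paper's one-line ``which is clearly bounded by'' argument.
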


\begin{proof}
Since there is a smooth restriction factor $ \Id^*_{(d-1)|j|+m,n} $ in 
the definition of $\mathcal {B}_{j,m,m}$, the right hand side of (\ref{f1unifestneg}) 
can be sharpen to 
\begin{equation}\label{refinef1unifestneg}
 C 2^{\frac{(d-1)j}{2}-\frac{m}{2}} \max\left\{2^{-100m}, 2^{\frac{(d-1)j+m}{2}}, \sigma  \right\} 
\|f_1\|_{L^2(\bI_1)} \|f_2\|_{L^2(\bI_2)} \left\|\Id^{**}_{(d-1)|j|+m,n} \check{f_3}\right\|_{2}\,,
\end{equation}
which is clearly bounded by
$$
C \max\left\{ 2^{-100m}, 2^{\frac{(d-1)j+m}{2}}, \sigma  \right\} 
\|f_1\|_{L^2(\bI_1)} \|f_2\|_{ L^2(\bI_2)} \|\wh{f_3}\|_\infty\,.
$$
\end{proof}

\begin{proposition}\label{prop2neg}
Let $j\leq 0 $ and $ \Lambda_{j,m, n}(f_1, f_2, f_3)  $ be defined as in (\ref{defofLajmn00}).
Then there exists a constant $C$ independent of $j, m, n$ such that
\begin{equation}\label{q1unifestneg}
\left |  \Lambda_{j,m, n}(f_1, e^{iq_2}, f_3) \right|\leq
 C  2^{-m/4} \|f_1\|_{L^2(\bI_1)} \| \wh {f_3}\|_\infty \,,
\end{equation} 
holds for all $q_2\in\mQ_2$, $f_1\in L^2(\bI_1)$ and $\wh {f_3} \in L^\infty$.
\end{proposition}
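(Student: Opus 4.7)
My plan is to adapt the proof of Proposition \ref{prop2} (given in Section \ref{proofprop2}) to the $j\le 0$ setting of Section \ref{triest000neg}. First, decompose $\Lambda_{j,m,n}(f_1, e^{iq_2}, f_3) = \Lambda^{(1)} + \Lambda^{(2)}$ exactly as in the proof of Proposition \ref{prop1neg}. The ``difference'' piece $\Lambda^{(1)}$ is handled there via the smoothness of $R_{\Phi_1}$, producing a contribution of order $2^{((d-1)j+m)/2}$, which is absorbed into $2^{-m/4}$ in the regime $|j|\ge m/(d-1)$ where Proposition \ref{prop2neg} is needed. For the principal term $\Lambda^{(2)}$, applying the method of stationary phase to $\mathfrak{m}_{d,m}$ in $t$ replaces $\mathfrak{m}_{d,m}(\xi,\eta)$ by $2^{-m/2}e^{ic_d 2^m\xi^{d/(d-1)}\eta^{-1/(d-1)}}$, reducing $\Lambda^{(2)}$ to
\begin{equation*}
2^{\tfrac{(d-1)j-m}{2}}\sum_{\ell_0,\ell}\int\wh{F_{1,m,\ell_0,\ell}}(\xi)\,\mathfrak{m}_{d,j,m}(\xi)\,\wh{F_{3,m,n,\ell}}(\xi)\,d\xi,
\end{equation*}
where, upon substituting $q_2(\eta) = a\eta^{-1/(d-1)} + b\eta$,
\begin{equation*}
\mathfrak{m}_{d,j,m}(\xi) = \int\wh{\Phi_1}(\eta)\exp\!\bigl(i[A(\xi)\eta^{-1/(d-1)} + \tilde b\,\eta]\bigr)\,d\eta,
\end{equation*}
with $A(\xi) := a + 2\pi c_d 2^m\xi^{d/(d-1)}$ and $\tilde b := b + 2\pi 2^{(d-1)j}\alpha_{m,\ell}$.

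The heart of the proof is the analysis of this multiplier. The second $\eta$-derivative of the phase equals $\tfrac{d}{(d-1)^2}A(\xi)\eta^{-(2d-1)/(d-1)}$, comparable in magnitude to $|A(\xi)|$ on $\operatorname{supp}\wh{\Phi_1}$. Partition the $\xi$-line into the good set $\bG := \{\xi : |A(\xi)|\ge 2^{m/2}\}$ and the bad set $\bB := \{\xi: |A(\xi)|<2^{m/2}\}$. Since $|a|\sim 2^m$ and $A'(\xi)\sim 2^m$ on the support of $\wh{\Phi_1}$, $\bB$ is an interval of measure $O(2^{-m/2})$, and van der Corput's lemma yields $|\mathfrak{m}_{d,j,m}(\xi)|\lesssim 2^{-m/4}$ on $\bG$. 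The good-set contribution to $\Lambda^{(2)}$ is controlled via Plancherel and Cauchy--Schwarz by $2^{-m/4}\|\wh{F_{1,m,\ell_0,\ell}}\|_2\|\wh{F_{3,m,n,\ell}}\|_2$. Summing over $\ell$ using the orthogonality of $\{\Id_{m,\ell}\}$---so that $\sum_\ell\|\wh{F_{1,m,\ell_0,\ell}}\|_2^2 \lesssim \|f_1\|_2^2$ and $\sum_\ell\|\wh{F_{3,m,n,\ell}}\|_2^2 \lesssim 2^{(d-1)|j|+m}\|\wh{f_3}\|_\infty^2$---and combining with the overall prefactor $2^{-(d-1)|j|/2-m/2}$, yields exactly $2^{-m/4}\|f_1\|_{L^2(\bI_1)}\|\wh{f_3}\|_\infty$ for the good-set part.

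The main obstacle is the bad-set contribution, since the pointwise bound $|\mathfrak{m}_{d,j,m}|\lesssim 1$ alone provides no decay. The essential observation, paralleling the argument in Section \ref{proofprop2}, is that on $\bB$ the multiplier still oscillates through the linear term $\tilde b\,\eta$: from the lattice relation $2^{(d-1)j}\alpha_{m,\ell}\sim 2^m$ (forced by $\alpha_{m,\ell}\in \bI_{m,\ell}$ and $\bI_{m,\ell}$ being subordinate to the interval indexed by $(d-1)|j|+m,\,n$), the coefficient $\tilde b$ typically satisfies $|\tilde b|\gtrsim 2^m$, so a single integration by parts in $\eta$ gains a factor of $|\tilde b|^{-1}\lesssim 2^{-m}$, producing $|\mathfrak{m}_{d,j,m}\mathbf{1}_\bB|\lesssim 2^{-m}$, far exceeding the needed $2^{-m/4}$. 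The residual configurations where this mechanism fails correspond to $x$ in a bounded sub-interval near the origin of $I_{(d-1)|j|+m,n}$ and are handled by a direct estimate using the restricted support of $\Id^{*}_{(d-1)|j|+m,n}$ together with the $O(2^{-m/2})$ measure of $\bB$. Once this bad-set analysis is carried out, the remainder of the proof is routine bookkeeping of exponents, analogous to the proof of Corollary \ref{cor1neg}.
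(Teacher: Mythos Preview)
Your approach has two genuine gaps that the paper's route avoids.

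\emph{First, the $\Lambda^{(1)}$ piece.} The bound you quote from Proposition~\ref{prop1neg}, of order $2^{((d-1)j+m)/2}$, is \emph{not} absorbed by $2^{-m/4}$ in the full regime $|j|\ge m/(d-1)$: that would require $(d-1)|j|\ge \tfrac32 m$, which is strictly stronger. For $m/(d-1)\le |j|<\tfrac{3m}{2(d-1)}$ your $\Lambda^{(1)}$ bound exceeds the target. No choice of norms on $\check f_2=R_{\Phi_1}\check{(e^{iq_2})}$ helps, since $\|\check f_2\|_\infty\sim 2^{-m/2}$, $\|\check f_2\|_1\sim 2^{m/2}$, $\|\check f_2\|_2\sim 1$ all interpolate to the same $L^2$ information. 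The freezing decomposition of Proposition~\ref{prop1neg} was designed to isolate the $\sigma$-uniformity gain; here $f_2$ is explicit and that decomposition wastes structure.

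\emph{Second, the bad-set analysis.} Your claim that $|\tilde b|\gtrsim 2^m$ ``typically'' is unjustified: $b\in\mathbb R$ is a free parameter of $q_2$, so for any given $n,\ell$ one can choose $b$ so that $\tilde b = b+2\pi 2^{(d-1)j}\alpha_{m,\ell}$ is arbitrarily small. On $\bB$ with small $\tilde b$ the phase $A(\xi)\eta^{-1/(d-1)}+\tilde b\,\eta$ has no exploitable oscillation, and the trivial bound $|\mathfrak m_{d,j,m}|\le C$ combined with $|\bB|\lesssim 2^{-m/2}$ yields, after the $\ell$-sum, only an $O(1)$ bound rather than $O(2^{-m/4})$. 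The ``residual configurations'' paragraph does not close this: since the step $2^{(d-1)j+m}\le 1$, there are $\sim 2^{(d-1)|j|-m/2}$ values of $\ell$ with small $\tilde b$, and counting them gives no saving.

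The paper proceeds entirely differently. It computes $R_{\Phi_1}\check{(e^{iq_2})}$ by stationary phase, obtaining an explicit physical-space oscillatory factor $\sim 2^{-m/2}e^{ic_1 a^{(d-1)/d}(x+b)^{1/d}}$; substitutes this into $\mathcal B_{j,m,n}$; and after rescaling applies a $TT^*$ estimate (Lemma~\ref{vdcorputneg}) based on the derivative lower bound of Lemma~\ref{phasetauneg}, which crucially uses $|j|\ge m/(d-1)$. This exploits the oscillation of the full phase $(x-t^d+c)^{1/d}$ jointly in the variables, rather than trying to bound a one-variable multiplier pointwise --- and that joint structure is exactly what your frequency-side argument discards.
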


We will prove Proposition \ref{prop2neg} in Section \ref{proofprop2neg}. We now are ready to 
provide a proof of Theorem {\ref{thmtriest1}}. \\

\subsection{Proof of Theorem {\ref{thmtriest1}}}

Corollary {\ref{cor1}}, Proposition \ref{prop2} and Theorem {\ref{uniformitythm}} yield
that $ \left |  \Lambda_{j,m, n}(f_1, f_2, f_3) \right|$ is dominated by 
 \begin{equation}\label{f1unifest221ff}
 C\left( \max\left\{ 2^{-100m}, 2^{\frac{-(d-1)j+m}{2}}, \sigma  \right\} +  \frac{2^{-\mathfrak{D}(d-1)m/2 } }{\sigma}   
 \right)
\|f_1\|_{L^2(\bI_1)} \|f_2\|_{ L^2(\bI_1)} \|\wh{f_3}\|_\infty\,,
\end{equation} 
holds for all $f_1\in L^2(\bI_1)$,  
$f_2\in L^2(\bI_2)$ and $\wh {f_3} \in L^\infty$. 
Take $\sigma$ to be $2^{ -\mathfrak{D}(d-1)m/4}$, then we have 
\begin{equation}\label{f1unifest221fff}
\left |  \Lambda_{j,m, n}(f_1, f_2, f_3) \right|\leq
 C \max\left\{ 2^{\frac{-(d-1)j+m}{2}}, 2^{ -\mathfrak{D}(d-1)m/4} \right\} 
\|f_1\|_{L^2(\bI_1)} \|f_2\|_{ L^2(\bI_1)} \|\wh{f_3}\|_\infty\,.
\end{equation}  
This give a proof for the case $j>0$. For the case $j\leq 0$, 
applying Corollary {\ref{cor1neg}}, Proposition \ref{prop2neg} and Theorem {\ref{uniformitythm}},
we estimate $ \left |  \Lambda_{j,m, n}(f_1, f_2, f_3) \right| $ by
\begin{equation}\label{f1unifest221ffneg}
 C\left( \max\left\{ 2^{-100m}, 2^{\frac{(d-1)j+m}{2}}, \sigma  \right\} +  \frac{2^{-m/4 } }{\sigma}   
 \right)
\|f_1\|_{L^2(\bI_1)} \|f_2\|_{ L^2(\bI_1)} \|\wh{f_3}\|_\infty\,,
\end{equation} 
holds for all $f_1\in L^2(\bI_1)$,  
$f_2\in L^2(\bI_2)$ and $\wh {f_3} \in L^\infty$. 
Now choose $\sigma$ to be $2^{ -m/8}$. Then we have 
\begin{equation}\label{f1unifest221ffnegf}
\left |  \Lambda_{j,m, n}(f_1, f_2, f_3) \right|\leq
 C \max\left\{ 2^{\frac{(d-1)j+m}{2}}, 2^{ -m/8} \right\} 
\|f_1\|_{L^2(\bI_1)} \|f_2\|_{ L^2(\bI_1)} \|\wh{f_3}\|_\infty\,,
\end{equation}  
which completes the proof of the case $j\leq 0$. Therefore 
 combining (\ref{f1unifest221fff}) and (\ref{f1unifest221ffnegf}), we proved Theorem {\ref{thmtriest1}}.

\section{Proof of Proposition \ref{prop2}}\label{proofprop2}
\setcounter{equation}0

\begin{lemma}\label{vdcorput2d}
Let $\ell\geq 1$. Let $\bI_1$ and $\bI_2$ be fixed bounded intervals. And let 
$\varphi$  be a function from $\bI_1\times \bI_2$ to $\mathbb R$ satisfying 
\begin{equation}\label{mixderivative}
 \left| \partial^{\ell}_x\partial_y \varphi(x,y)\right|\geq 1\,,
\end{equation}  
for all $(x,y)\in \bI_1\times \bI_2$. 
Assume an additional condition holds in the case $\ell=1$,
\begin{equation}\label{mixderivative1}
 \left| \partial^{2}_x\partial_y \varphi(x,y)\right|\neq 0\,,
\end{equation} 
for all $(x,y)\in \bI_1\times \bI_2$.
Then there exists a constant depending on the length of $\bI_1$ and $\bI_2$ but
independent of $\varphi, \lambda$ and  the locations of $\bI_1$ and $ \bI_2$  
such that
\begin{equation}\label{IestCorput}
 \left|\iint_{\bI_1\times \bI_2} e^{i\lambda \varphi(x,y)} f(x) g(x) 
dx dy \right|\leq C 
(1+|\lambda|)^{-\mathfrak{D}(\ell)} \|f\|_2\|g\|_2 \,,
\end{equation}
for all $f, g\in L^2$, where 
\begin{equation}\label{defofmathfrakd1}
\mathfrak{D}(\ell) = \left\{
\begin{array}{ll}
 {1}/({2\ell}), & {\rm if} \,\,  \ell\geq 2\,; \\
 1/({2 + \e}), & {\rm if} \,\,\ell=1\,  \,.
\end{array}
\right. 
\end{equation}
for any $\e>0$.
\end{lemma}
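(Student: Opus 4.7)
The plan is to prove the estimate by a $TT^*$ argument combined with van der Corput's lemma applied in the inner $x$-variable. Let $T_\lambda : L^2(\bI_1) \to L^2(\bI_2)$ be the oscillatory operator defined by
\[
T_\lambda f(y) = \int_{\bI_1} e^{i\lambda \varphi(x,y)} f(x)\,dx,
\]
so the integral in the statement equals $\langle T_\lambda f,\overline{g}\rangle$ (I assume the $g(x)$ in the excerpt is a typo for $g(y)$). By duality it suffices to bound $\|T_\lambda T_\lambda^*\|_{L^2\to L^2} \leq C(1+|\lambda|)^{-2\mathfrak{D}(\ell)}$, whose integral kernel is
\[
K_\lambda(y,y') = \int_{\bI_1} e^{i\lambda \psi_{y,y'}(x)}\,dx, \qquad \psi_{y,y'}(x):=\varphi(x,y)-\varphi(x,y').
\]

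The central observation is that by the fundamental theorem of calculus applied in the $y$-slot,
\[
\partial_x^\ell \psi_{y,y'}(x) = (y-y')\int_0^1 \partial_x^\ell \partial_y \varphi\bigl(x,\, y'+s(y-y')\bigr)\,ds,
\]
so the hypothesis $|\partial_x^\ell \partial_y\varphi|\geq 1$ forces $|\partial_x^\ell \psi_{y,y'}(x)|\geq |y-y'|$ uniformly in $x$. For $\ell\geq 2$, the $k=\ell$ version of van der Corput's lemma (which requires no monotonicity) then yields $|K_\lambda(y,y')|\leq C(|\lambda||y-y'|)^{-1/\ell}$. For $\ell=1$, the $k=1$ van der Corput lemma additionally demands monotonicity of $\partial_x \psi_{y,y'}$; here I invoke the auxiliary hypothesis \eqref{mixderivative1}, which together with continuity forces $\partial_x^2\partial_y\varphi$ to have constant sign on the compact rectangle $\bI_1\times\bI_2$. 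The analogous identity
\[
\partial_x^2 \psi_{y,y'}(x)=(y-y')\int_0^1 \partial_x^2\partial_y\varphi\bigl(x,\,y'+s(y-y')\bigr)\,ds
\]
then shows $\partial_x\psi_{y,y'}$ is monotone in $x$, and van der Corput yields $|K_\lambda(y,y')|\leq C(|\lambda||y-y'|)^{-1}$.

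Combining these oscillatory bounds with the trivial estimate $|K_\lambda(y,y')|\leq |\bI_1|$, I would conclude via Schur's test: the row and column sums of $|K_\lambda|$ are controlled by
\[
\int_0^{|\bI_2|}\min\bigl\{1,(|\lambda|t)^{-1/\ell}\bigr\}\,dt,
\]
which is $O(|\lambda|^{-1/\ell})$ for $\ell\geq 2$ and $O(|\lambda|^{-1}\log|\lambda|)\leq C_\varepsilon |\lambda|^{-1+\varepsilon}$ for $\ell=1$. Taking square roots of $\|T_\lambda T_\lambda^*\|$ delivers the claimed bound, and the logarithmic loss at the $\ell=1$ endpoint is precisely what accounts for the exponent $1/(2+\varepsilon)$ rather than $1/2$. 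The only delicate step is the $\ell=1$ monotonicity argument, where \eqref{mixderivative1} is essential; the $\ell\geq 2$ case is a fairly standard van der Corput plus $TT^*$ calculation with constants depending only on $|\bI_1|,|\bI_2|$ as required.
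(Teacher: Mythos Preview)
Your argument is correct. The paper does not actually prove this lemma: immediately after stating it the author cites Carbery--Christ--Wright \cite{CCW} for the case $\ell\geq 2$ and Phong--Stein \cite{PSt1} for $\ell=1$, adding that the exponents in \eqref{defofmathfrakd1} are not sharp. Your $TT^*$ reduction, followed by one-variable van der Corput in $x$ (using the mean-value identity to produce the lower bound $|\partial_x^\ell\psi_{y,y'}|\geq |y-y'|$ and, for $\ell=1$, the sign-constancy of $\partial_x^2\partial_y\varphi$ to force monotonicity), and then Schur's test on the kernel $K_\lambda(y,y')$, is exactly the standard self-contained route to such estimates and is in the spirit of the cited references. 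You are also right that $g(x)$ in \eqref{IestCorput} is a typo for $g(y)$; this is confirmed by the way the lemma is applied downstream to an integrand of the form $G_\tau(u)\Theta_\tau(v)$.

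One side remark worth recording: in the $\ell=1$ case the classical H\"ormander nondegenerate-phase theorem in fact yields the endpoint exponent $1/2$ (no $\varepsilon$-loss) from the hypothesis $|\partial_x\partial_y\varphi|\geq 1$ alone, without needing \eqref{mixderivative1}. The author has deliberately stated the lemma with the extra hypothesis and the weaker exponent $1/(2+\varepsilon)$, which is precisely what your Schur-test computation with its logarithmic loss delivers; so your proof matches the lemma as stated, even though sharper results are available.
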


This lemma is related to a 2-dimensional van der Corput lemma proved in \cite{CCW}.
The case $\ell\geq 2$ was proved in \cite{CCW}. And a proof of the case $\ell=1$ can be found
in \cite{PSt1}. The estimates on $\mathfrak{D}(\ell)$ in (\ref{defofmathfrakd1}) are not sharp.
With some additional convexity conditions on the phase function $\varphi$, $\mathfrak{D}(\ell) $ 
might be improved to be $1/(\ell+1)$ (see \cite{CCW} for some of such improvements). But in 
this article we do not need to pursue the sharp estimates.

\begin{lemma}\label{mixdevarphi}
Let $c, \tau\in \mathbb R$ and $\varphi$ be a function defined by
\begin{equation}\label{defofvarphic}
\varphi_c(x,y) = \left(x-y^{1/d} + c\right)^d
\end{equation}
Define $\bQ_{c,j.\tau}(x,y)$ by
\begin{equation}\label{defofmQcjtau}
 \bQ_{c,j, \tau}(x,y)=\varphi_c(x, y) - \varphi_c(x+2^{-(d-1)j}\tau, y+\tau)\,.
\end{equation}
Then there exists a constant $C_d$ depending only on $d$ such that 
\begin{equation}\label{estmixd11}
 \left| \partial_x^{d-1}\partial_y \bQ_{c,j, \tau}(x,y)\right|\geq C_d|\tau|\,
\end{equation}
holds for all $y, y+\tau\in [2^{-100}, 2^{100}]$. Moreover, if $d=2$,
we have 
\begin{equation}\label{estmixd12}
 \left| \partial_x\partial^2_y \bQ_{c,j, \tau}(x,y)\right|\geq  C_d |\tau|\,
\end{equation}
holds for all $y, y+\tau\in [2^{-100}, 2^{100}]$.
\end{lemma}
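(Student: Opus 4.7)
The plan is to exploit an algebraic cancellation: taking enough $x$-derivatives of $\varphi_c$ kills all $x$- and $c$-dependence, reducing the mixed derivative of $\bQ_{c,j,\tau}$ to a one-variable difference in $y$. Once this is observed, the result follows from the mean value theorem.

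First I would write $u := x - y^{1/d} + c$, so that $\varphi_c = u^d$ and $\partial_x u = 1$. Iterating, $\partial_x^{d-1}\varphi_c = d!\,u = d!(x - y^{1/d} + c)$, which is affine in $x$ and $c$. Differentiating once in $y$ yields
\[
\partial_x^{d-1}\partial_y \varphi_c(x,y) \;=\; -(d-1)!\, y^{(1-d)/d},
\]
a function of $y$ alone. The key point is that this expression is independent of $x$ and $c$, so evaluating at the shifted point $(x+2^{-(d-1)j}\tau,\, y+\tau)$ simply replaces $y$ by $y+\tau$, and the shift parameter $2^{-(d-1)j}$ drops out entirely. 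Hence
\[
\partial_x^{d-1}\partial_y \bQ_{c,j,\tau}(x,y) \;=\; -(d-1)!\bigl[\,y^{(1-d)/d} - (y+\tau)^{(1-d)/d}\,\bigr].
\]

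Next I would apply the mean value theorem to the right-hand side: it equals $\tfrac{(d-1)(d-1)!}{d}\,\tau\,\xi^{(1-2d)/d}$ for some $\xi$ between $y$ and $y+\tau$. Since both endpoints lie in $[2^{-100}, 2^{100}]$, so does $\xi$, and because $(1-2d)/d$ is a fixed negative exponent, $\xi^{(1-2d)/d}$ is bounded below by a positive constant $c_d$ depending only on $d$. This yields the desired estimate $|\partial_x^{d-1}\partial_y \bQ_{c,j,\tau}(x,y)| \geq C_d |\tau|$.

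For the $d=2$ case I would repeat the same scheme, computing directly: $\partial_y \varphi_c = -(x-y^{1/2}+c)\,y^{-1/2}$, so $\partial_y^2 \varphi_c = \tfrac{1}{2}y^{-1} + \tfrac{1}{2}(x-y^{1/2}+c)\,y^{-3/2}$, and differentiating in $x$ gives $\partial_x\partial_y^2 \varphi_c = \tfrac{1}{2}\,y^{-3/2}$, once again independent of $x$ and $c$. The identical mean value argument, applied to the difference $y^{-3/2} - (y+\tau)^{-3/2}$, produces the bound $C_d|\tau|$. There is no real obstacle here — the lemma is essentially a routine computation, and the only substantive content is the observation that sufficiently many $x$-derivatives collapse $\varphi_c$ to a function of $y$ alone, so that $\bQ_{c,j,\tau}$'s mixed derivative becomes purely a finite difference in $y$.
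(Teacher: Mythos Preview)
Your proof is correct and follows exactly the same route as the paper: compute $\partial_x^{d-1}\partial_y\varphi_c$ directly to obtain a function of $y$ alone (the paper writes this as $C_d\bigl((y+\tau)^{1/d-1}-y^{1/d-1}\bigr)$ for the difference), then invoke the mean value theorem. The paper's proof is simply a two-line sketch of the computation you carried out in full; there is no difference in approach.
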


\begin{proof}
A direct computation yields 
\begin{equation}
 \partial_x^{d-1}\partial_y \bQ_{c,j, \tau}(x,y) = C_d \left( (y+\tau)^{\frac{1}{d}-1} - 
   y^{\frac{1}{d}-1} \right)\,.
\end{equation}
Hence the desired estimate (\ref{estmixd11}) follows immediately from the mean value theorem. 
(\ref{estmixd12}) can be obtained similarly. 
\end{proof}

\begin{lemma}\label{vdcorput2d111}
 Let $\bI$ be  a fixed interval of length $1$.
And let $\theta$ be a bump function supported on $[1/100, 2]$ (or $[-2, -1/100]$). 
Suppose that $\phi_{d, j,m}$ is a phase function defined by
\begin{equation}\label{defofphidjm}
 \phi_{d,j,m}(x,y) = C_{d,j,m}
 2^{m} \left(x-  y^{\frac{1}{d}} + c_{j,m}\right)^{d}\,,
\end{equation}
where $C_{d,j,m}, c_{j,m}$ are constants  independent of 
$x, y$ such that  $ 2^{-200}\leq |C_{d,j,m}|\leq 2^{200}$. 
Let $\Lambda_{d,j,m, \bI}$ be a bilinear form defined by
\begin{equation}
\Lambda_{d,j,m, \bI}(f,g)= 
\iint e^{i \phi_{d,j,m}(x,t) } f\left(x-2^{-(d-1)j}t\right)  g(x)
 \Id_\bI(x) \theta(t) dx dt\,.
 \end{equation}
Then we have 
\begin{equation}\label{estCorputphidjm}
 \left|\Lambda_{d,j,m, \bI}(f,g)  \right|  \leq C_d 2^{-\frac{\mathfrak{D}(d-1) m}{2}} \|f\|_2\|g\|_\infty \,,
\end{equation}
holds for all $f\in L^2$ and $g\in L^\infty$, 
where $C_d$ is a constant depending only on $d$.
\end{lemma}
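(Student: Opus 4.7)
The plan is as follows. By Hölder's inequality, since $|\bI|=1$ and $|g(x)|\le\|g\|_\infty$,
\[
|\Lambda_{d,j,m,\bI}(f,g)| \le \|g\|_\infty \|H\|_{L^2(\bI)}, \qquad H(x) := \int e^{i\phi_{d,j,m}(x,t)} f(x-2^{-(d-1)j}t)\theta(t)\,dt,
\]
so it suffices to prove $\|H\|_{L^2(\bI)}^2 \le C_d\,2^{-\mathfrak{D}(d-1)m}\|f\|_2^2$. Expanding the square as a triple integral in $(x,t,s)$, writing $s=t+\tau$, and changing variables via $u = x-2^{-(d-1)j}t$ gives $\|H\|_{L^2(\bI)}^2 = \int I(\tau)\,d\tau$, where
\[
I(\tau) = \iint e^{i\tilde\Psi_\tau(u,t)}\,f(u)\,\overline{f(u-2^{-(d-1)j}\tau)}\,\mathbf{1}_\bI(u+2^{-(d-1)j}t)\,\theta(t)\theta(t+\tau)\,du\,dt,
\]
with $\tilde\Psi_\tau(u,t) = \phi_{d,j,m}(u+2^{-(d-1)j}t,t) - \phi_{d,j,m}(u+2^{-(d-1)j}t,t+\tau)$.

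Using the explicit form $\phi_{d,j,m}(x,y) = C_{d,j,m}\,2^m(x-y^{1/d}+c_{j,m})^d$, a direct computation yields
\[
\partial_u^{d-1}\partial_t \tilde\Psi_\tau(u,t) = C_{d,j,m}\,2^m(d-1)!\bigl[(t+\tau)^{1/d-1} - t^{1/d-1}\bigr],
\]
which is independent of $u$ and, for $t,t+\tau \in [1/100, 2]$, bounded below in magnitude by $c_d\,2^m|\tau|$ by the mean value theorem (an instance of Lemma~\ref{mixdevarphi} in the present normalization). I then write $I(\tau) = \int f(u)\,\overline{f(u-2^{-(d-1)j}\tau)}\,M_\tau(u)\,du$ with $M_\tau(u) = \int e^{i\tilde\Psi_\tau(u,t)}\,\mathbf{1}_\bI(u+2^{-(d-1)j}t)\,\theta(t)\theta(t+\tau)\,dt$, and bound $|I(\tau)| \le \|M_\tau\|_\infty\,\|f\|_2^2$ via the $L^1$-$L^\infty$ pairing $\int|f(u)\,f(u-2^{-(d-1)j}\tau)|\,du \le \|f\|_2^2$ (Cauchy-Schwarz in $u$). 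Lemma~\ref{vdcorput2d} then gives $\|M_\tau\|_\infty \le C_d(1+2^m|\tau|)^{-\mathfrak{D}(d-1)}$ after the shear substitution $u' = u + 2^{-(d-1)j}t$ turns the coupling indicator into a genuine product domain $\bI \times \operatorname{supp}\theta$ (while preserving the $u$-independent mixed-derivative lower bound). Finally, integration in $\tau$ over its bounded range yields $\int(1+2^m|\tau|)^{-\mathfrak{D}(d-1)}\,d\tau \le C_d\,2^{-\mathfrak{D}(d-1)m}$ (since $\mathfrak{D}(d-1)<1$ in either regime of the definition), completing the argument.

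The main obstacle is extracting the pointwise bound on $M_\tau(u)$ from Lemma~\ref{vdcorput2d}, which is stated as a bilinear $L^2\times L^2$ estimate over a product domain. The required passage is handled by a Schur-type argument: one interprets $M_\tau(u_0)$ as the pairing of the $t$-slice of the oscillatory kernel with a smoothed unit-$L^2$ bump concentrated at $u_0$, so that the two dimensional bilinear estimate of Lemma~\ref{vdcorput2d} supplies the required decay through its operator-norm formulation while the shear change of variables keeps the mixed-derivative condition intact.
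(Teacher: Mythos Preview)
Your overall strategy (square the operator, shift one $t$-variable by $\tau$, change coordinates, apply a two-dimensional van der Corput estimate, then integrate in $\tau$) is exactly right and matches the paper. The gap is in the step where you claim
\[
\|M_\tau\|_\infty \le C_d\,(1+2^m|\tau|)^{-\mathfrak{D}(d-1)}
\]
via Lemma~\ref{vdcorput2d}. That lemma is a bilinear $L^2\times L^2$ estimate on an integral of the form $\iint e^{i\lambda\varphi}F(u)G(v)\,du\,dv$; it does not give pointwise bounds on one-dimensional slices of the oscillatory kernel. Your ``Schur-type'' workaround does not repair this: testing the bilinear form against an $L^2$-normalized bump of width $\epsilon$ at $u_0$ yields a quantity of size $\epsilon^{1/2}M_\tau(u_0)$, so the bilinear bound gives only $|M_\tau(u_0)|\lesssim \epsilon^{-1/2}(2^m|\tau|)^{-\mathfrak{D}(d-1)}$, which blows up as $\epsilon\to 0$. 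And after your shear $u'=u+2^{-(d-1)j}t$ the coupling simply migrates from the indicator $\mathbf 1_\bI(u+2^{-(d-1)j}t)$ to the factor $f(u'-2^{-(d-1)j}t)\overline{f(u'-2^{-(d-1)j}t-2^{-(d-1)j}\tau)}$, so the integrand is still not a product and Lemma~\ref{vdcorput2d} still does not apply directly to $I(\tau)$.

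The fix is to run the $TT^*$ argument on the other side: write $\Lambda_{d,j,m,\bI}(f,g)=\langle \bT g,f\rangle$ with
\[
\bT g(x)=\int e^{i\phi_{d,j,m}(x+2^{-(d-1)j}t,t)}\,(g\,\mathbf 1_\bI)(x+2^{-(d-1)j}t)\,\theta(t)\,dt,
\]
and expand $\|\bT g\|_2^2$. After the same shift $t_2=t_1+\tau$ and the change of variables $u=x+2^{-(d-1)j}t$, $v=t$, the inner integral becomes
\[
\iint e^{\,iC_{d,j,m}2^m\,\bQ_{c_{j,m},j,\tau}(u,v)}\,G_\tau(u)\,\Theta_\tau(v)\,du\,dv,
\]
with $G_\tau(u)=(g\mathbf 1_\bI)(u)\overline{(g\mathbf 1_\bI)(u+2^{-(d-1)j}\tau)}$ and $\Theta_\tau(v)=\theta(v)\theta(v+\tau)$. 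Now the integrand genuinely factors, the phase is exactly the one treated in Lemma~\ref{mixdevarphi}, and Lemma~\ref{vdcorput2d} applies cleanly to give the decay $(2^m|\tau|)^{-\mathfrak{D}(d-1)}$. Since $\|G_\tau\|_2\le\|g\|_\infty^2$ (as $|\bI|=1$) and $\|\Theta_\tau\|_2\le C$, integrating in $\tau$ finishes the proof. The only change from your argument is which function carries the indicator $\mathbf 1_\bI$ through the $TT^*$ step; placing it with $g$ rather than $f$ is what produces the product structure needed for Lemma~\ref{vdcorput2d}.
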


\begin{proof}

The bilinear form $\Lambda_{d,j,m, \bI}(f,g)$ equals to 
$
 \left\langle  {\bT}_{d,j,m, \bI}(g), f\right\rangle
$,
where $\bT_{d,j,m, \bI}$ is defined by
\begin{equation}
\bT_{d,j,m, \bI}g(x) = \int e^{i \phi_{d,j,m}(x + 2^{-(d-1)j}t,t) }
\left(g\Id_\bI\right) \left(x+2^{-(d-1)j}t\right) \theta(t) dt\,.
\end{equation}
By a change of variables, $ \|\bT_{d,j,m, \bI}g\|^2_2$ can be expressed as
$$
 \int\left( \iint e^{i \Phi_{d,j,m, \tau}(x,t)}
 G_{\tau} \left(x+2^{-(d-1)j}t\right) 
 \Theta_\tau(t)
             dx dt \right) d\tau\,,
$$
where
$$
 \Phi_{d,j,m, \tau}(x,t)=\phi_{d,j,m}(x+2^{-(d-1)j}t,t)- 
 \phi_{d,j,m}(x+2^{-(d-1)j}t+2^{-(d-1)j}\tau , t+\tau)
$$
$$
G_{\tau}(x)= 
\left(\Id_{\bI}g\right)(x) \overline{ \left(\Id_{\bI}g\right) \left(x+2^{-(d-1)j}\tau)\right) }\,,
$$
$$
\Theta_\tau(t) = \theta(t)\theta(t+\tau)\,.
$$
Changing coordinates $(x,t)\mapsto (u,v)$ by 
 $u= x+2^{-(d-1)j}t$ and $v=t$, we write the inner double-integral 
in the previous integral as
$$
\iint e^{i C_{d,j,m}2^m \bQ_{c_{j,m},j, \tau}(u,v)} G_\tau(u) \Theta_\tau(v) du dv\,,
$$
where $\bQ_{c_{j,m}, j, \tau}$ is defined as in (\ref{defofmQcjtau}).
From (\ref{estmixd11}), (\ref{estmixd12}) and Lemma \ref{vdcorput2d},
we then estimate $ \|\bT_{d,j,m, \bI}g\|^2_2 $ by
$$
C_d\int_{-10}^{10} \min\left\{1,  2^{-\mathfrak{D}(d-1) m }
  \tau^{-\mathfrak{D}(d-1)}\right\}  \left\| G_\tau\right\|_2 \left\| \Theta_\tau\right\|_2 d\tau\,,
$$
which clearly is bounded by
$$
 C_d 2^{-\mathfrak{D}(d-1) m } \|g\|_\infty^2\,,
$$
Hence (\ref{estCorputphidjm}) follows and 
therefore we complete the proof.
\end{proof}

We now turn to the proof of Proposition {\ref{prop2}}.
For simplicity, we assume $\rho$ is supported on $[1/8, 2]$.
 For any function $q_1
 = (a\xi^{\frac{d}{d-1}} + b\xi \in\mQ_1$, 
we have 
\begin{equation}\label{q1inverse}
R_{\Phi_1}\Check{(e^{iq_1})}(x) = \int \wh{\Phi_1}(\xi) e^{ia\xi^{\frac{d}{d-1}}}e^{i(x+b)\xi} d\xi\,,
\end{equation}
where $|a|\sim 2^m$.  The stationary phase method yields that the principle part of
(\ref{q1inverse}) is 
\begin{equation}\label{prinpart}
 \mathcal{P}(q_1)(x)=  C_d |a|^{-1/2} e^{i c_1 a^{-(d-1)}(x+ b)^d } \wh{\Phi_1}\left(c_2 a^{-(d-1)}(x+b)^{d-1}\right)\,,
\end{equation}
where $C_d, c_1, c_2$ are constants depending only on $d$. 
Thus to obtain Proposition {\ref{prop2}}, it suffices to prove that there 
exists a constant $C$ such that
\begin{equation}\label{q1unif23estti}
\left |  \ti{\Lambda}_{j,m, n}(e^{iq_1}, f_2, f_3) \right|\leq
 C  2^{-\frac{\mathfrak{D}(d-1)m}{2}}  \|\check{f_2}\|_2 \| \check{f_3}\|_\infty \,,
\end{equation} 
holds for  all $q_1\in\mQ_1, \check{f_2}\in L^2$, and  $\check{f_3}\in L^\infty$, 
where $ \ti{\Lambda}_{j,m, n}(e^{iq_1}, f_2, f_3)$ is defined to be 
$$
 2^{-\frac{(d-1)j}{2}} \iint \mathcal{P}(q_1)\left(2^{-(d-1)j}x-2^m t\right) \check{f_2}\left(x-2^m t^d\right)
 \left(\Id^*_{(d-1)j+m,n}\check{f_3}\right)(x) \rho(t) dt dx\,. 
 $$
Observe that $\wh\Phi_1$ is supported essentially in a bounded interval away from $0$.
Thus we can restrict the variable $x$ in a bounded interval $\bI_{d,j,m}$ whose length is comparable 
to $2^{(d-1)j+m}$ and reduce the problem to estimate
\begin{equation}\label{q1unif23estforsub}
\left | {\Lambda}_{j,m, n, \bI_{d,j,m}}(f_2, f_3) \right|\leq
 C  2^{-\frac{\mathfrak{D}(d-1) m}{2}}  \|\check{f_2}\|_2 \| \check{f_3}\|_\infty \,,
\end{equation} 
holds for an absolute constant $C$ and all $\check{f_2}\in L^2, \check{f_3}\in L^\infty$,
where ${\Lambda}_{j,m, n, \bI_{d,j,m}}(f_2, f_3)$ is equal to
\begin{equation}
 2^{-\frac{(d-1)j}{2}-\frac{m}{2}}\!\iint \! \mathcal{P}_{d,j,m}\left( 2^{-(d-1)j}x-2^m t\right) \check{f_2}\left(x-2^m t^d\right)
 \left(\Id_{\bI_{d,j,m}}\check{f_3} \right)(x) \rho(t) dt dx\,.
\end{equation}
Here 
\begin{equation}
\mathcal{P}_{d,j,m}(x)= e^{i c_1a^{-(d-1)}(x+b)^d} \wh{\Phi_1} \left(c_2 a^{-(d-1)}(x+b)^{d-1}\right)\,.
\end{equation}
Let $\bI$ be an interval of length $1$.  A rescaling argument then reduces (\ref{q1unif23estforsub})
to an estimate of a bilinear form ${\Lambda}_{j,m, n, \bI}$ associated to $\bI$, that is, 
\begin{equation}\label{q1unif23estforunit}
\left | {\Lambda}_{j,m, n, \bI}(f, g) \right|\leq
 C 2^{-\frac{\mathfrak{D} (d-1)m}{2}}  \|f\|_2\| g\|_\infty \,,
\end{equation}
where ${\Lambda}_{j,m, n, \bI}(f, g)$ is defined by
$$
 \iint  \mathcal{P}_{d,j,m}\left( 2^m x-2^m t\right)
 f\left(x-2^{-(d-1)j} t^d\right) g(x) \Id_{\bI}(x)\rho(t) dt dx\,.
$$
Notice that 
\begin{equation}
\mathcal{P}_{d,j,m}\left( 2^m x-2^m t\right) = e^{i C_{d,j, m}2^m (x- t + c_{j,m})^d }\wh{\Phi_1} 
\left( C_d C_{d,m} (x -t + c_{m} )^{d-1}\right)\,,
\end{equation}
where $C_{d,j,m}, C_{d,m},  c_{j,m}, c_m, C_d$ are constants such that 
$|C_{d,j,m}|, |C_{d,m}|\in [2^{-100}, 2^{100}]$.
$\wh{\Phi_1} 
\left( C_d C_{d,m} (x -t + c_{m} )^{d-1}\right)$ can be dropped by utilizing Fourier series
since $\wh\Phi_1$ is a Schwartz function, 
because $x\in\bI, t\in{\operatorname{supp}\rho}$ are restricted in bounded intervals respectively. 
Then (\ref{q1unif23estforunit}) can be reduced to Lemma {\ref{vdcorput2d111}} by 
a change of variable $t^d\mapsto t$. Therefore we prove Proposition {\ref{prop2}}. \\

\section{Proof of Proposition \ref{prop2neg}}\label{proofprop2neg}
\setcounter{equation}0

\begin{lemma}\label{phasetauneg}
Let $j\leq 0$ and $\tau\in [-100, 100]$. And let $\phi_{d,j,m, \tau}$ be defined by
\begin{equation}\label{defofphidjmtau}  
 \phi_{d,j,m, \tau}(u,v) =  \left( u-v^d\right)^{1/d} -
    \left( u +2^{(d-1)j}\tau - (v+\tau)^d\right)^{1/d}\,.
\end{equation}
Suppose that $|j|\geq m/(d-1)$ and $|u|\geq 2^{-m}$. Then 
\begin{equation}\label{largederiofphi}
 \left| \partial_v\phi_{d,j,m, \tau}(u,v) \right|\geq C|\tau u|\, 
\end{equation}
holds whenever $v, v+\tau\in \bI_1$
and $u-v^d, u-(v+\tau)^d\in \bI_2$, where $\bI_i$ is the interval 
$[1/100, 100]$ or $[-100, -1/100]$.  
\end{lemma}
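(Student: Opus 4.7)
The plan is to compute $\partial_v \phi_{d,j,m,\tau}$ directly, then rewrite the resulting difference as a line integral that both factors out $\tau$ explicitly and exposes the dependence on $u$. The hypotheses should then separate a main term of size $|\tau u|$ from an error of size $|\tau|\cdot 2^{-m}$.

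First I would differentiate termwise:
$$\partial_v\phi_{d,j,m,\tau}(u,v) = -v^{d-1}(u-v^d)^{1/d-1} + (v+\tau)^{d-1}\bigl(u + 2^{(d-1)j}\tau - (v+\tau)^d\bigr)^{1/d-1}.$$
Setting $\Psi(v,w) := v^{d-1}(u-v^d+w)^{1/d-1}$, this is precisely $\Psi(v+\tau, 2^{(d-1)j}\tau)-\Psi(v,0)$. I would evaluate this difference by the fundamental theorem of calculus along the broken path $(v,0)\to(v+\tau,0)\to(v+\tau, 2^{(d-1)j}\tau)$. A short calculation exploiting the identity $(1/d-1)d = -(d-1)$ gives the clean forms
$$\partial_v\Psi(v,w) = (d-1)v^{d-2}(u-v^d+w)^{1/d-2}(u+w), \qquad \partial_w\Psi(v,w) = -\tfrac{d-1}{d}\,v^{d-1}(u-v^d+w)^{1/d-2},$$
so that integrating along the first segment produces a main term of the form
$$(d-1)\tau u \int_0^1 (v+s\tau)^{d-2}\bigl(u-(v+s\tau)^d\bigr)^{1/d-2}\,ds,$$
while the second segment contributes an error $R$ of size $O\bigl(|\tau|\cdot 2^{(d-1)j}\bigr)$.

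Next I would estimate the two pieces using the hypotheses. Because $\bI_1$ and $\bI_2$ are convex intervals of fixed sign bounded away from $0$, and $v,v+\tau$ lie on the same side of $0$, both $v+s\tau$ and $u-(v+s\tau)^d$ preserve their signs and stay of magnitude comparable to $1$ for all $s\in[0,1]$. Hence the integrand in the main term keeps a fixed sign and satisfies $|(v+s\tau)^{d-2}(u-(v+s\tau)^d)^{1/d-2}|\sim 1$, so the main term is bounded below in absolute value by $c_d|\tau u|$. On the other hand, $|j|\geq m/(d-1)$ forces $|2^{(d-1)j}|\leq 2^{-m}$, which both ensures that the perturbation $s\cdot 2^{(d-1)j}\tau$ inside the $w$-segment integrand is $O(2^{-m})$ (keeping that integrand $\sim 1$) and yields $|R|\leq C|\tau|\cdot 2^{-m}$.

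The main obstacle is pure bookkeeping: for the desired lower bound $|\partial_v\phi_{d,j,m,\tau}|\geq C|\tau u|$ to follow from $c_d|\tau u|-C|\tau|\cdot 2^{-m}$, the hypothesis ``$|u|\geq 2^{-m}$'' must be interpreted with a sufficiently large implicit constant so that the main term absorbs the error $R$. Under this (routine) interpretation, the two estimates combine to give the claimed inequality with $C = C(d) > 0$. The analogous identity $|\partial_u \partial_v \phi|\gtrsim |\tau|$ (used in the companion case $d=2$) follows from differentiating the factored formula once more in $u$ without altering the sign analysis.
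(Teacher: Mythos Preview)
Your argument is correct and essentially identical to the paper's: the paper adds and subtracts $(v+\tau)^{d-1}\bigl(u-(v+\tau)^d\bigr)^{1/d-1}$ to split $\partial_v\phi$ into $\Phi_1+\Phi_2$, which is exactly your broken-path decomposition, and then applies the mean value theorem to obtain $\Phi_1=G'(\eta)\tau$ with $G'(v)=-(d-1)v^{d-2}(u-v^d)^{1/d-2}u$ (your first-segment integrand) and $|\Phi_2|\leq C\,2^{(d-1)j}|\tau|$ (your error $R$). The bookkeeping concern you raise about the implicit constant in $|u|\geq 2^{-m}$ is present unchanged in the paper's argument.
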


\begin{proof}
Clearly
$$
 \partial_v\phi_{d,j,m, \tau}(u,v)=
 - \left( u-v^d\right)^{\frac{1}{d}-1} v^{d-1} + 
 \left( u +2^{(d-1)j}\tau - (v+\tau)^d\right)^{\frac{1}{d}-1} (v+\tau)^{d-1}\,,
$$
which can be written as a sum    
$$
 \Phi_{d,j,m, \tau,1}(u,v) + \Phi_{d,j,m,\tau,2}(u,v)\,,
$$
where $\Phi_{d,j,m, \tau,1}(u,v)$ is 
$$
  - \left( u-v^d\right)^{\frac{1}{d}-1} v^{d-1} + 
 \left( u - (v+\tau)^d\right)^{\frac{1}{d}-1} (v+\tau)^{d-1}\,
$$
and $\Phi_{d,j,m, \tau,2}(u,v)  $ is equal to
$$
-\left( u + 2^{(d-1)j}\tau   - (v+\tau)^d\right)^{\frac{1}{d}-1} (v+\tau)^{d-1}  + 
 \left( u - (v+\tau)^d\right)^{\frac{1}{d}-1} (v+\tau)^{d-1}\,.
$$
The mean value theorem yields that 
\begin{equation}\label{smallphi2tau}
 \left| \Phi_{d,j,m, \tau,2}(u,v) \right|\leq C2^{(d-1)j}|\tau|\,,
\end{equation} 
and 
$$
 \Phi_{d,j,m, \tau,1}(u,v) = G'_{d,j,m, \tau}(\eta) \tau\,,
$$
where 
\begin{equation}\label{defofGdjmtau}
 G_{d,j,m,\tau}(v) = - \left( u-v^d\right)^{\frac{1}{d}-1} v^{d-1}\,
\end{equation}
and $\eta $ is a point between $v$ and $v+\tau$. A simple computation gives 
\begin{equation}\label{deriofGtau}
 G_{d,j,m,\tau}'(v)= -(d-1)\left(u-v^d\right)^{\frac{1}{d}-2}v^{d-2}u \,.
\end{equation}
Now (\ref{largederiofphi}) follows from (\ref{smallphi2tau}), (\ref{deriofGtau}), $|j|\geq m/(d-1)$ 
and $|u|\geq 2^{-m}$. Therefore we finish the proof. 
\end{proof}

\begin{lemma}\label{vdcorputneg}
Let $\theta_1, \theta_2$ be  bump functions supported on $\bI_1$ and
$\bI_2$ respectively, where $\bI_i$ is $[1/50, 2]$ or $[-2, -1/50]$. 
Suppose that $ j\leq 0$, $|j|\geq m/(d-1)$ and $\phi_{d, j,m}$ is a phase function defined by
\begin{equation}\label{defofphidjmneg}
 \phi_{d,j,m}(x, t) = C_{d,j,m}
 2^{m} \left(x- t^d\right)^{1/d}\,,
\end{equation}
where $C_{d,j,m}$ are constants  independent of 
$x, t$ such that  $ 2^{-200}\leq |C_{d,j,m}|\leq 2^{200}$. 
Let $\Lambda_{d,j,m}$ be a bilinear form defined by
\begin{equation}
\Lambda_{d,j,m}(f,g)= 
\iint e^{i \phi_{d,j,m}(x,t) } f\left(x-2^{(d-1)j}t\right)  g(x)
 \theta_1(x-t^d) \theta_2(t) dx dt\,.
 \end{equation}
Then we have 
\begin{equation}\label{estCorputneg}
 \left|\Lambda_{d,j,m}(f,g)  \right|  \leq C_d 2^{-m/4} 
\|f\|_2\|g\|_\infty \,,
\end{equation}
holds for all $f\in L^2$ and $g\in L^\infty$, 
where $C_d$ is a constant depending only on $d$.
\end{lemma}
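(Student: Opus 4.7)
The plan is to mimic the $TT^*$ argument of Lemma~\ref{vdcorput2d111}, but with the 2-dimensional van der Corput estimate of Lemma~\ref{vdcorput2d} replaced by 1-dimensional van der Corput in $v$ driven by the first-derivative lower bound from Lemma~\ref{phasetauneg}. First I would write $\Lambda_{d,j,m}(f,g)=\langle f,\overline{T_{d,j,m}g}\rangle$, where after the substitution $u=x-2^{(d-1)j}t$ the operator is
\begin{equation*}
T_{d,j,m}g(u) = \int e^{-iC_{d,j,m}2^m(u+2^{(d-1)j}t-t^d)^{1/d}}\, g(u+2^{(d-1)j}t)\,\theta_1(u+2^{(d-1)j}t-t^d)\,\theta_2(t)\,dt.
\end{equation*}
By Cauchy--Schwarz, it suffices to prove $\|T_{d,j,m}g\|_2^2 \leq C\,2^{-m/2}\|g\|_\infty^2$.

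Next, expanding $\|T_{d,j,m}g\|_2^2$ as a triple integral in $(u,t_1,t_2)$ and performing the unit-Jacobian change of variables $x=u+2^{(d-1)j}t_1$, $v=t_1$, $\tau=t_2-t_1$, the phase collapses to $C_{d,j,m}2^m\phi_{d,j,m,\tau}(x,v)$, exactly the function appearing in Lemma~\ref{phasetauneg}. The remaining amplitude is $g(x)\overline{g(x+2^{(d-1)j}\tau)}H_\tau(x,v)$, where $H_\tau$ is the smooth product of the four $\theta$-bumps, supported in a bounded region uniform in $\tau\in[-C,C]$.

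For each fixed $\tau$ I would apply 1-dimensional van der Corput in $v$. By Lemma~\ref{phasetauneg}, $|\partial_v(C_{d,j,m}2^m\phi_{d,j,m,\tau}(x,v))|\geq c\,2^m|\tau x|$ on the support of $H_\tau$ provided $|x|\geq 2^{-m}$ (using $|j|\geq m/(d-1)$). This yields
\begin{equation*}
\Bigl|\int e^{iC_{d,j,m}2^m\phi_{d,j,m,\tau}(x,v)}H_\tau(x,v)\,dv\Bigr| \leq C\min\bigl\{1,\,(2^m|\tau x|)^{-1}\bigr\}.
\end{equation*}
Pairing with $|g(x)\overline{g(x+2^{(d-1)j}\tau)}|\leq\|g\|_\infty^2$ and integrating in $x$ on a bounded interval, splitting at $|x|=2^{-m}$ (trivial estimate for the small-$x$ slice of volume $\lesssim 2^{-m}$; van der Corput plus a logarithmic $\int|x|^{-1}dx$ for the large-$x$ slice), yields the $\tau$-fiber bound
\begin{equation*}
\int \bigl|g(x)\overline{g(x+2^{(d-1)j}\tau)}\bigr|\,|I_\tau(x)|\,dx \leq C\|g\|_\infty^2\bigl(2^{-m} + m\,2^{-m}|\tau|^{-1}\bigr).
\end{equation*}

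Finally I would split the $\tau$-integration at $|\tau|=2^{-m/2}$. The trivial small-$\tau$ contribution is $O(2^{-m/2})\|g\|_\infty^2$; the van der Corput large-$\tau$ contribution is $O(m^2\,2^{-m})\|g\|_\infty^2$ after the logarithmic $\int|\tau|^{-1}d\tau=O(m)$. Since $m^2\,2^{-m}\leq 2^{-m/2}$ for $m$ large, this gives $\|T_{d,j,m}g\|_2^2\leq C\,2^{-m/2}\|g\|_\infty^2$ and hence the desired estimate $|\Lambda_{d,j,m}(f,g)|\leq C 2^{-m/4}\|f\|_2\|g\|_\infty$. The main obstacle is the failure of the $v$-derivative lower bound near $x=0$: balancing the trivial small-$x$ bound against the oscillatory large-$x$ bound, and choosing the compatible $\tau$-threshold $2^{-m/2}$, is what pins down the final $2^{-m/4}$ rate. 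A minor technical point is that applying 1-d van der Corput with amplitude requires $\partial_v\phi_{d,j,m,\tau}(x,\cdot)$ to have a uniformly bounded number of monotonicity pieces, which is a short check directly on the explicit formula for that derivative.
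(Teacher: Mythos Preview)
Your argument is correct and follows essentially the same route as the paper: a $TT^*$ reduction, the change of variables $(u,t_1,t_2)\mapsto(x,v,\tau)$, and one-dimensional van der Corput in $v$ using the lower bound of Lemma~\ref{phasetauneg} together with the fact that $\partial_v^2\phi_{d,j,m,\tau}(x,\cdot)$ has only finitely many zeros. The only cosmetic difference is that the paper replaces $\min\{1,(2^m|\tau x|)^{-1}\}$ by its square root $\min\{1,2^{-m/2}|\tau x|^{-1/2}\}$ so that the $(x,\tau)$-integral converges without logarithms, whereas you keep the full power and absorb the resulting $m^2$ factor into $2^{-m/2}$.
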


\begin{proof}
The bilinear form $\Lambda_{d,j,m}(f,g)$ equals to 
$
 \left\langle  {\bT}_{d,j,m}(g), f\right\rangle
$,
where $\bT_{d,j,m}$ is defined by
\begin{equation}
\bT_{d,j,m}g(x) = \int e^{i \phi_{d,j,m}(x + 2^{(d-1)j}t,t) }
g\left(x+2^{(d-1)j}t\right) \theta_1\left(x+ 2^{(d-1)j}t -t^d\right) \theta_2(t) dt\,.
\end{equation}
By a change of variables, we express  $ \|\bT_{d,j,m}g\|^2_2$ as
$$
 \int\left( \iint e^{i \Phi_{d,j,m, \tau}(x + 2^{(d-1)j}t,t)}
 G_{\tau} \left(x+2^{(d-1)j}t\right) \Theta_{1,\tau}\left(x+2^{(d-1)j}t, t\right) 
 \Theta_{2,\tau}(t)   
dx dt \right)    d\tau\,,
$$
where
$$
 \Phi_{d,j,m, \tau}(x,t)=\phi_{d,j,m}(x,t)- 
 \phi_{d,j,m}(x+2^{(d-1)j}\tau,  t+\tau)
$$
$$
G_{\tau}(x)= g(x) \overline{ g\left(x+2^{(d-1)j}\tau)\right) }\,,
$$
$$
 \Theta_{1,\tau}\left(x, t\right)=
\theta_1\left(x -t^d\right)
 \overline{\theta_1\left(x + 2^{(d-1)j}\tau -(t+\tau)^d\right)  }\,,
$$
$$
\Theta_{2,\tau}(t) = \theta_2(t)\theta_2(t+\tau)\,.
$$
Changing coordinates $(x,t)\mapsto (u,v)$ by 
 $u= x+2^{(d-1)j}t$ and $v=t$, we write the inner double-integral 
in the previous integral as
\begin{equation}\label{*}
\iint e^{i \Phi_{d,j,m, \tau}(u,v)} G_\tau(u)\Theta_{1, \tau}(u,v)
\Theta_{2,\tau}(v) du dv\,.
\end{equation}
It is clear that $\partial^2_v \Phi_{d,j,m, \tau} $ has at most
finite zeros for fixed $\tau, u$. Thus,
by Lemma \ref{phasetauneg} and van der Corput lemma, we obtain 
\begin{equation}\label{**}
\left|\int e^{i \Phi_{d,j,m, \tau}(u,v)}\Theta_{1, \tau}(u,v)
\Theta_{2,\tau}(v) dv   \right|\leq C\min\left\{1, 2^{-m}|\tau u|^{-1} \right\}
\end{equation}
holds for $|u|\geq 2^{-m}$. Thus we estimate 
$\|\bT_{d,j,m}g\|^2_2 $ by
$$
 C2^{-m}\|g\|_\infty^2+  
C_d\int_{-10}^{10}\int_{-10}^{10} 
\min\left\{1,  2^{-m/2}|\tau u|^{-1/2}\right\}
 \left\| G_\tau\right\|_\infty  du d\tau \,,
$$
which is dominated by
$$
 C_d 2^{- m /2} \|g\|_\infty^2\,.
$$
Henceforth, (\ref{estCorputneg}) follows and we complete the proof.
\end{proof}

We now turn to the proof of Proposition {\ref{prop2neg}}.
For simplicity, we assume $\rho$ is supported on $[1/8, 2]$.
 For any function $q_2
 = a\eta^{-\frac{1}{d-1}} + b\eta \in\mQ_2$, 
we have 
\begin{equation}\label{q1inverseneg}
R_{\Phi_1}\Check{(e^{iq_2})}(x) = \int \wh{\Phi_1}(\eta) e^{ia\eta^{-\frac{1}{d-1}}}e^{i(x+b)\eta} d\eta\,,
\end{equation}
where $|a|\sim 2^m$.  By the stationary phase method, we obtain that the principle part of
(\ref{q1inverseneg}) is 
\begin{equation}\label{prinpartneg}
 \mathcal{P}(q_2)(x)=  C_d |a|^{-1/2} e^{i c_1 a^{(d-1)/d}(x+ b)^{1/d} } 
\wh{\Phi_1}\left(c_2 \left(\frac{x+b}{a}\right)^{-\frac{d-1}{d}}\right)\,,
\end{equation}
where $C_d, c_1, c_2$ are constants depending only on $d$. 
Thus to obtain Proposition {\ref{prop2}}, it suffices to prove that there 
exists a constant $C$ such that
\begin{equation}\label{q1unif23esttineg}
\left |  \ti{\Lambda}_{j,m, n}(f_1, e^{iq_2}, f_3) \right|\leq
 C  2^{-m/4}  \|\check{f_1}\|_2 \| \check{f_3}\|_\infty \,,
\end{equation} 
holds for  all $q_2\in\mQ_2, \check{f_1}\in L^2$, and  $\check{f_3}\in L^\infty$, 
where $ \ti{\Lambda}_{j,m, n}(f_1, e^{iq_2}, f_3)$ equals to 
$$
 2^{\frac{(d-1)j}{2}} \iint \check{f_1}\left(x-2^m t\right)
\mathcal{P}(q_2)\left(2^{(d-1)j}x-2^m t^d\right) 
 \check{f_3}(x) \rho(t) dt dx\,. 
 $$
By a rescaling argument, it suffices to show that 
\begin{equation}\label{rescalneg0}
 \ti\Lambda_{d,j,m}(f, g) \leq C2^{-m/4} \|f\|_2 \| g\|_\infty 
\end{equation}
holds for all $f\in L^2$ and $g\in L^\infty$, 
where $\ti\lambda_{d,j,m}$ is defined by 
\begin{equation}\label{defoftiLadjm}  
\ti\Lambda_{d,j,m}(f, g)=\!\!\!
 \iint \!\!\!f\left(x-2^{(d-1)j}t  \right) g(x)
   e^{iC_{d,j,m}2^m(x-t^d+b/2^m)^{1/d}} \theta_1(x-t^d+b/2^m) \rho(t) dt dx \,.
\end{equation}
Here the constant $C_{d,j,m}$ satisfies $2^{-100}\leq |C_{d,j,m}|\leq 2^{100}$ and 
$\theta_1$ is a bump function supported on $[1/100, 2] $ or $[-2, -1/100]$.
Clearly (\ref{rescalneg0}) is a consequence of Lemma \ref{vdcorputneg}. 
Therefore the proof of Proposition \ref{prop2neg} is completed.  \\

\end{document}